\documentclass[11pt,a4paper]{article}
\usepackage[utf8]{inputenc}
\usepackage{amsmath}
\usepackage{amsfonts}
\usepackage[colorlinks,linkcolor=blue,anchorcolor=blue,citecolor=blue]{hyperref}
\usepackage{amssymb}
\usepackage{extarrows}
\usepackage{amsthm}
\usepackage{hyperref}
\usepackage{cite}
\usepackage{enumerate}
\PassOptionsToPackage{normalem}{ulem}
\usepackage{ulem}
\usepackage[margin=1in]{geometry} 
\hfuzz1pc 
\hbadness = 10001 
\usepackage{array}
\usepackage{cases}
\usepackage{mathrsfs}
\usepackage{longtable}
\allowdisplaybreaks[4]

\newcolumntype{L}[1]{>{\raggedright\let\newline\\\arraybackslash\hspace{0pt}}m{#1}}
\newcolumntype{C}[1]{>{\centering\let\newline\\\arraybackslash\hspace{0pt}}m{#1}}
\newcolumntype{R}[1]{>{\raggedleft\let\newline\\\arraybackslash\hspace{0pt}}m{#1}}

\usepackage{natbib}
\usepackage{color}
\usepackage{dsfont}
\usepackage{marginnote}
\usepackage{enumitem}
\usepackage{verbatim}
\usepackage{graphicx}
\bibliographystyle{plain}

\theoremstyle{plain}
\newtheorem{theorem}{\protect Theorem}[section]
\newtheorem{prop}[theorem]{\protect Proposition}
\newtheorem{definition}[theorem]{\protect Definition}

\newtheorem{lemma}[theorem]{\protect Lemma}
\newtheorem{remark}[theorem]{\protect Remark}
\newtheorem{ass}{\protect Assumption}
\newtheorem{corollary}[theorem]{\protect Corollary}


\def\d{\mathrm{d}}

\def\as{\mathrm{a.s.}}

\newcommand{\R}{\mathbb{R}}
\newcommand{\E}{\mathbb{E}}
\newcommand{\X}{\mathcal{X}}
\newcommand{\T}{\top}
\newcommand{\F}{\mathcal{F}}
\newcommand{\Fb}{\mathbb{F}}
\newcommand{\Pb}{\mathbb{P}}
\newcommand{\Pc}{\mathcal{P}}

\newcommand{\tr}{\mathrm{tr}}
\newcommand{\Law}{\mathcal{L}}
\newcommand{\st}{\mathrm{s.t.}}

\newcommand{\pa}{\partial}
\newcommand{\M}{\mathbb{M}}
\newcommand{\Mc}{\mathcal{M}}

\newcommand{\U}{\mathscr{U}}

\newcommand{\Lb}{\mathbb{L}}
\newcommand{\lv}{\lVert}
\newcommand{\rv}{\rVert}
\newcommand{\co}{\mathrm{co}}
\newcommand{\cl}[1]{\overline{#1}}
\newcommand{\Wc}{\mathcal{W}}
\newcommand{\tX}{\mathtt{X}}
\newcommand{\tY}{\mathtt{Y}}
\newcommand{\tH}{\mathtt{H}}

\newcommand{\assref}[1]{\hyperref[#1]{Assumption \ref*{#1}}}
\newcommand{\thmref}[1]{\hyperref[#1]{Theorem \ref*{#1}}}
\newcommand{\propref}[1]{\hyperref[#1]{Proposition \ref*{#1}}}
\newcommand{\remref}[1]{\hyperref[#1]{Remark \ref*{#1}}}
\newcommand{\lemref}[1]{\hyperref[#1]{Lemma \ref*{#1}}}
\newcommand{\defref}[1]{\hyperref[#1]{Definition \ref*{#1}}}
\newcommand{\corref}[1]{\hyperref[#1]{Corollary \ref*{#1}}}
\newcommand{\equref}[1]{\hyperref[#1]{(\ref*{#1})}}
\newcommand{\exaref}[1]{\hyperref[#1]{Example \ref*{#1}}}

\title{Extended mean-field control under constraints: The generalized Fritz-John conditions and Lagrangian method}

\author{Lijun Bo \thanks{Email: lijunbo@ustc.edu.cn, School of Mathematics and Statistics, Xidian University, Xi'an, 710126, China.}
\and
Jingfei Wang \thanks{Email:wjf2104296@mail.ustc.edu.cn, School of Mathematical Sciences, University of Science and Technology of China, Hefei, 230026, China.}
\and
Xiang Yu \thanks{Email: xiang.yu@polyu.edu.hk, Department of Applied Mathematics, The Hong Kong Polytechnic University, Kowloon, Hong Kong, China.}
}
\date{\vspace{-0.3cm}}

\begin{document}
\maketitle
\begin{abstract}
	This paper studies mean-field control with joint law dependence under dynamic expectation constraints and/or dynamic state-control-law constraints. We pioneer the establishment of the stochastic maximum principle (SMP) and the derivation of the backward SDE (BSDE) from the perspective of constrained optimization using the method of Lagrangian multipliers. We first propose to embed the constrained mean-field control (C-MFC) with joint-law dependence into some abstract optimization problems with constraints on Banach spaces, for which we develop the generalized Fritz-John (FJ) optimality conditions. We then prove the stochastic maximum principle (SMP) for C-MFC by transforming the FJ conditions into an equivalent stochastic first-order condition associated with a general type of constrained forward-backward SDEs (FBSDEs). Contrary to the existing literature, we treat the  McKean-Vlasov SDE as an infinite-dimensional equality constraint such that the BSDE induced by the FJ first-order optimality conditions can be interpreted as the generalized Lagrange multiplier. We also employ the methodology to stochastic control and mean field game problems under dynamic constraints.
	
	\ \\ 
	\noindent\textbf{Keywords}: Mean-field control, dynamic expectation constraints, dynamic state-control-law constraints, stochastic maximum principle, generalized Fritz-John conditions.\\
    \ \\
	\noindent\textbf{2020 MSC}: 93E20, 49N80, 46N10, 60H10
\end{abstract}

\vspace{0.1in}
\section{Introduction}\label{sec:intro}

Motivated by wide applications in finance, engineering and management science, stochastic control problems with various types of constraints have attracted tremendous attention over the past decades. Several types of constraints on state process, the control process and their laws have been predominantly investigated in different context such as the dynamic pathwise state, control and law constraints, the dynamic expectation constraints on state and/or control paths, and their simplified versions as static constraints only at the terminal time. New techniques are inevitably needed to address these constraints in order to employ the dynamic programming approach or the Pontryagin maximum principle approach.

To name a few in the direction of studies to cope with the pathwise state-control-law constraints, Hu and Zhou \cite{HuZhou2005} investigate a stochastic LQ control problem with random coefficients by employing Tanaka's formula, where the control variable is constrained in a cone; Bonnans and Silva \cite{BS2012} examine the stochastic control problems with control constraints and several equality and inequality constraints over the final state, for which the first order and second order optimality conditions are established; Liu et al. \cite{Liu2021} consider stochastic control with an end-point constraint by establishing a general FJ condition in Banach space where only the control variable is reformulated as an optimization problem; Hu et al. \cite{Huaap22} investigate a stochastic LQ control problem with regime switching, random coefficients and cone control constraint, where the optimal feedback control and optimal cost value are characterized explicitly via two systems of extended stochastic Riccati equations; Zhang and Zhang \cite{ZZ23} study a stochastic LQ control problem with a static equality constraint on the terminal state by using the Lagrangian duality method.

On the other hand, fruitful studies to incorporate dynamic or static expectation constraints can also be found in the literature. For example, Soner and Touzi \cite{ST2002} study a stochastic target problem by using the geometric dynamic programming approach; Bouchard and Nutz \cite{BN2012} develop the weak dynamic programming principle 
for stochastic control problems with expectation constraints that implies the Hamilton-Jacobi-Bellman equation in the viscosity sense; Bokanowski et al. \cite{BPZ2016} employ the level-set approach to reformulate the constrained control problems into an optimization problem over a family of unconstrained singular control problems; Chow et al. \cite{ChowYZ2020} reformulate some state constraints as equality expectation constraints and examine the dynamic programming principle; Pfeiffer et al. \cite{PTZ2021} develop a duality approach to study the stochastic control problems under both equality and inequality expectation constraints using convex analysis techniques; Hu et al. \cite{Hu} consider optimal control of stochastic differential equations subject to dynamic expectation constraint where the constrained FBSDEs are formulated and studied; Bayraktar and Yao \cite{BYao24a} recently analyze an optimal stopping problem with some inequality and equality expectation constraints in a general non-Markovian framework and establish a dynamic programming principle in weak formulation; Bayraktar and Yao \cite{BYao24b} further generalize the methodology in \cite{BYao24a} to address stochastic control/stopping problems where the diffusion can be controlled. 

MFC problems, also called the McKean-Vlasov optimal control problems, feature the cooperative interactions in the mean field model where all agents jointly optimize the social optimum that yields the optimal control from the perspective of the social planner, which have raised a lot of research interests in the past few years. To name a few, Acciaio et al.~\cite{Carmona1} explore suitable versions of the Pontryagin stochastic maximum principle (SMP) for extended MFC problems in which the cost function and the state process depend on the joint distribution of the state and the control process; The first-order variation in Bensoussan \cite{Bensoussan1981} and the spike-variation method in Peng \cite{Peng1990} have been generalized to cope with MFC problems among \cite{AD2010,Buckdahn2011, MB2012, Djehicheetal2015} that handle McKean-Valsoc dynamics with coefficients depending on the moments of the population state distribution; Li~\cite{Li2012} examines MFC problems and establish the SMP in scalar interaction forms; Bensoussan et al.~\cite{Bensoussan2013} introduce a unified approach for MFC and MFG problems by using the HJB-FP coupled equations and the SMP; Carmona and Delarue \cite{Carmona2015} solve a class of MFC problems with a general dependence on the law of the state process; Pham and Wei \cite{PW17} investigate the dynamic programming principle for MFC problems in the presence of common noise; Pham and Wei \cite{PW18} examine the extended MFC with state-control joint law dependence by reformulating the problem into a deterministic control problem with only the marginal distribution of the process as controlled state variable; Bo et al. \cite{BLY} consider a LQ-MFC problem by proving the Gamma convergence of the optimal controls from the {$N$-player cooperative games} to the mean-field model {as $N$ tends to infinity}; Nie and Yan~\cite{Nie2022} study extended MFC problems under partial information; Bo et al. \cite{BWWX24} recently establish the SMP and HJB eqaution for an extended MFC problem with Poissonian common noise by proposing the extension transformation.

MFC and MFG problems under the dynamic expectation constraints and/or the dynamic state-control and law constraints are relatively under developed. Grammatico et al.~\cite{Grammatico16} study a deterministic MFC problem in the presence of heterogeneous convex constraints at the level of individual agents; Du and Wu \cite{DW2022} consider a LQG MFC problem with control constraints; Germain et al. \cite{Germain23} apply the level-set approach to some MFC problems under dynamic state-law constraints and discuss some applications in renewable energy storage and portfolio selections; Bonnans et al. \cite{BGP2023} analyze the existence of equilibria for a class of deterministic MFGs of controls with state and control constraints at the terminal time by using Kahutani's theorem; Daudin \cite{Daudin23a} study the convergence problem of mean-field control theory in the presence of state constraints; Daudin \cite{Daudin23b} examine a problem of optimal control of the Fokker-Planck equation with state constraints in the Wasserstein space and give first-order necessary conditions for optimality in the form of a mean-field game system of partial differential equations associated with an exclusion condition;
Craig et al.~\cite{Craig24} discuss a class of first-order MFC problem  with source and terminal constraints by using a particle method; Meherrem and Hafayed \cite{MHafa2024} prove Peng's necessary optimality conditions for a general mean-field system under state constraints based on conventional variational principle.

In the present paper, we study extended mean field control problems under both the dynamic expectation constraints and the pathwise state-control and law constraints from the perspective of optimization problems with Banach space valued constraints. We propose a new methodology that bridges the SMP method for the C-MFC problems and the Lagrange multipliers method based on the generalized Fritz-John (FJ) optimality condition. {More importantly, due to the complex  dynamical constraints (as shown by \eqref{expected_constraint} and \eqref{path_constraint}) inherent in our MFC problem \eqref{cost_func}, the variational formulation of the state process $X^{\alpha}=(X_t^{\alpha})_{t\in[0,T]}$ w.r.t. the control variable $\alpha=(\alpha_t)_{t\in[0,T]}$ becomes intractable when only the control is treated as the target variable (namely, the state process $X^{\alpha}$ is endogenous). To overcome this challenge, we treat both the state process and
the control process as target variables (i.e.,  $(X,\alpha)\in C$ with $C$ being a closed and convex subset of a Banach space) and propose the abstract optimization framework \eqref{optimization} where $F(X,\alpha)=0$ determined by the the dynamics of $X^{\alpha}$ is formulated as an equality constraint.} In a nutshell, when the forward McKean-Vlasov state process is taken as an infinite-dimensional constraints, the generalized FJ optimality condition for the control variable $\alpha$ yields the stochastic (first-order) minimum condition that leads to the SMP. Meanwhile, the generalized Lagrangian multiplier in the space of It\^{o} processes induced by the FJ optimality condition on the state variable $X$ gives us the adjoint process as the solution to the BSDE. 

More concretely, we first develop the general type of FJ optimality condition (\thmref{fj_condition}), new to the literature, for nonlinear optimization problems with (Banach space valued) equality and/or inequality constraints by invoking the generalized gradient introduced in Clarke \cite{Clarke}. Next, as the main novelty of this paper, we formulate our C-MFC problem and transform it into an abstract optimization problem with constraints on Banach spaces (see problem \eqref{optimization}), for which we can apply the newly developed FJ optimality condition on Banach spaces. We then contribute to the establishment of SMP for the C-MFC problem (\thmref{SMP_con}) by transforming the FJ conditions into a stochastic first-order condition that gives rise to a generalized type of constrained FBSDEs. In sharp contrast to the conventional methods, we broaden the scope by considering both state and control as targeted variables in the formulation of the constrained optimization problems such that the Lagrangian multipliers method can be employed. Furthermore, we propose to treat the controlled Mckean-Vlasov state dynamics as an (integral) equality constraint on an infinite dimensional Hilbert space, which is also new to the literature. Then, the adjoint process as a solution to a generalized type of  constrained BSDEs induced by the FJ first-order optimality condition can be interpreted as the generalized Lagrangian multiplier to handle the forward SDE equality constraint, for which  a new space denoted by $\M^2$ is introduced, which consists of all It\^o processes to characterize the integration formulation of the Mckean-Vlasov type SDE constraint. As special cases of our C-MFC problems, we also present the SMP results for the stochastic optimal control problems as well as mean field game (MFG) problem with the same types of dynamic constraints. 

The rest of the paper is organized as follows. Section~\ref{sec:FJcond} gives an overview of the generalized gradients on Banach space and establishes the generalized FJ optimality condition and the constraint qualification (CQ) for a class of constrained optimization problems on Banach spaces. Section~\ref{sec:CSCP} reformulates our C-MFC as an abstract constrained optimization problem on Banach spaces and develops the SMP and the associated constrained FBSDEs by using the generalized FJ optimality condition. Section \ref{sec:constrainedSCP-MFG} presents the SMP results for constrained stochastic optimal control problems and constrained MFG problems as special cases of the main results from Section~\ref{sec:CSCP}. Finally, the proofs of some auxiliary results in are reported in Appendix \ref{appendix}.\\
\noindent{\bf Notations.}\quad We list below some notations that will be used frequently throughout the paper:
\vspace{-0.2in}
\begin{center}
\begin{longtable}{l l}
	$\langle\cdot,\cdot\rangle$ & The inner product in Euclidean space\\
	$|\cdot|$ & Euclidean norm on $\R^n$\\
	$(E^*,\lv\cdot\rv_{E^*})$& The dual space of a Banach space $(E,\lv\cdot\rv_E)$\\
	$B(x;r)$& The ball centered at $x\in E$ with radius $r>0$\\
	$\langle\cdot,\cdot\rangle_{E,E^*}$& The pairing between Banach space $(E,\lv\cdot\rv_E)$ and its dual\\
	$A^c$& The complement of the set $A$\\
	$\co(A)$& The convex hull of the set $A\subset E$\\
	$A^{\rm o}$ ($\cl{A}$)& The interior (closure) of the set $A\subset E$\\
	$\pa A$ & The boundary of the set $A\subset E$\\
	$a\geq_K 0$~($a>_K0$)& $a\in K$~($a\in K^{\rm o}$) with $K$ being a closed (linear) cone\\
	$Df$& The Fr\'{e}chet derivative of $f:E\mapsto\R$\\
	$\nabla_xf$ & Gradient of $f:\R^n\mapsto\R$\\
	$D_Xf$ ($D_{\alpha}f$) & The partial Fr\'{e}chet derivative of $f$ with resepct to $X$ ($\alpha$)\\
	$\mathcal{L}(E_1;E_2)$& Set of continuous linear mappings $f:E_1\mapsto E_2$\\
	$\mathcal{L}(\xi)$ & Law of r.v. $\xi$\\
	$C(E_1;E_2)$ & Set of continuous mappings $f:E_1\mapsto E_2$\\
	$L^p((A,\mathscr{B}(A),\lambda_A);E)$ & Set of $L^p$-integrable $E$-valued mapping defined on \\ 
	& measure space $(A,\mathscr{B}(A),\lambda_A)$. Write $L^p(A; E)$ for short\\ 
	$L^p((A,\mathscr{B}(A),\lambda_A);E)/\mathord{\sim}$& The quotient set of $L^p(A;E)$ w.r.t. equivalence\\
	&relation $\mathord{\sim}$: $f\mathord{\sim} g$ iff $f=g$ except for a $\lambda_A$-null set.  \\ 
	&Write $L^p(A; E)/\mathord{\sim}$ for short\\
	$\E$ ($\E'$) & Expectation operator under probability measure $\Pb$ ($\Pb'$)\\
	$\Pc_p(E)$ & Set of probability measures on E with finite $p$-order moments\\
	$\Mc^+[0,T]$ & Set of non-negative Radon measures on $[0,T]$
\end{longtable}
\end{center}

\section{Generalized Fritz-John Optimality Condition}\label{sec:FJcond}

For given $m,k\in\mathbb{N}$, denote $I:=\{1,2,\ldots,m\}$ and $J:=\{1,2,\ldots,k\}$ as two index sets. Let $(\tX,\|\cdot\|_{\tX})$ be a Banach space, $(\tY_i,\|\cdot\|_{\tY_i})_{i\in I}$ be a sequence of separable Banach spaces and $(\tH_j,\langle\cdot,\cdot\rangle_{\tH_j})_{j\in J}$ be a sequence of Hilbert spaces. 

In this section, we first develop the optimality condition in the Fritz-John (FJ) type for the following constrained optimization problem on Banach spaces that, for a closed and convex subset $C\subset \tX$,
\begin{align}\label{banach_space_optimization}
\begin{cases}
\text{minimize $f(x)$ over $x\in C$;}\\
\text{subject to}~g_i(x)\leq_{K_i} 0,\quad \forall i\in I,\\
\qquad\qquad~~ h_j(x)=0,\quad\forall j\in J,
\end{cases}
\end{align}
where, $K_i\subset\tY_i$ are closed and convex {\it cones} for $i\in I$. The mappings $f:\tX\to\R$, $g_i:\tX\mapsto\tY_i$, $h_j:\tX\to \tH_j$ for $i\in I$ and $j\in J$ are assumed to be continuously Fr\'{e}chet differentiable at any $x\in\tX$. 

For the goal above, we propose a technique based on generalized gradients introduced in Clarke \cite{Clarke} in which the constraints are only restricted in Euclidean space in the sense that $\tY=\tH=\R$ and $K_i=\R_+\cup\{0\}$ for all $i\in I$.  
The general Banach space valued constraints formulated in problem \eqref{banach_space_optimization} are customized to study the C-MFC problems that will be discussed in Section~\ref{sec:CSCP}.

\subsection{A review of generalized gradients on Banach space}
For the completeness, let us give the brief overview of the generalized gradient for a locally Lipschitz mapping $f:\tX\mapsto\R$ given in Clarke \cite{Clarke}. For any $v\in\tX$, we define the generalized directional derivative $f^{\rm o}(x;v)$ in the direction $v$ by
{
\begin{align}\label{generalized_derivative}
f^{\rm o}(x;v):=\varlimsup_{(h,\lambda)\to 0}\frac{f(x+h+\lambda v)-f(x+h)}{\lambda}
\end{align}with $h\in\mathtt{X}$ and $\lambda>0$}.
It follows from Lemma 1 in \cite{Clarke} that $v\mapsto f^{\rm o}(x;v)$ is  convex and $|f^{\rm o}(x;v)|\leq c_f\lv v\rv_{\tX}$ with $c_f>0$ being the locally Lipschitz coefficient of $f:\tX\mapsto\R$. Thus, the generalized gradient of $f:\tX\mapsto\R$ at point $x\in\tX$ (denoted by $\pa f(x)$) is defined to be the subdifferential of the convex function $v\mapsto f^{\rm o}(x;v)$ at $0$. Namely, for $\xi\in\tX^*$, we have $\xi\in\pa f(x)$ iff $f^{\rm o}(x;v)\geq \langle v,\xi\rangle_{\tX,\tX^*}$ for all $v\in\tX$. Therefore, one can conclude that $\pa f(x)$ coincides with the subdifferential of $f$ or $\{Df(x)\}$ when $f$ is convex or has a continuous Fr\'{e}chet derivative $Df:\tX\to\tX^*$, respectively (c.f. Proposition 3 and Proposition 4 in \cite{Clarke}).

Next, we list some basic properties of the generalized gradient whose proofs can be found in \cite{Clarke}:
\begin{lemma}\label{prop_ger_dif} The set $\pa f(x)$ satisfies the following properties:
\begin{itemize}
\item[{\rm(i)}] $\pa f(x)$ is a nonempty, convex and weak-$^*$ compact subset of $\tX^*$. Moreover, it holds that $\lv\xi\rv_{\tX^*}\leq c_f$ for all $\xi\in\pa f(x)$.

\item[{\rm(ii)}] For any $v\in\tX$, it holds that $f^{\rm o}(x;v)=\max_{\xi\in\pa f(x)}\langle v,\xi\rangle_{\tX,\tX^*}$.

\item [{\rm(iii)}] Let $B$ be a nonempty, convex, and weak-$^*$ compact subset of $\tX^*$. Then, $\pa f(x)\subset B$ iff $f^{\rm o}(x;v)\leq \max_{\xi\in B}\langle v,\zeta\rangle_{\tX,\tX^*}$ for all $v\in\tX$.

\item[{\rm(iv)}] If $x\in\tX$ is a local minimum of $f:\tX\mapsto\R$, then $0\in\pa f(x)$.

\item[{\rm(v)}] Let $\xi_i\in\pa f(x_i)$ with $i\in\mathbb{N}$ and assume that $x_i\to x$ and $\xi_i\xrightarrow{w^*}\xi$ as $i\to\infty$. Then,  $\xi\in\pa f(x)$.

\item[{\rm(vi)}] $\pa(f+g)(x)\subset\pa f(x)+\pa g(x)$ for all $x\in\tX$.
\end{itemize}
\end{lemma}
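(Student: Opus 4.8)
The plan is to derive all six properties directly from the definition of $\pa f(x)$ as the subdifferential at $0$ of the map $v\mapsto f^{\rm o}(x;v)$, leaning on three functional-analytic tools: the Hahn--Banach extension/domination theorem, the Banach--Alaoglu theorem, and the Hahn--Banach separation theorem in the weak-$^*$ topology. I would first record the one elementary fact not yet stated, namely that $v\mapsto f^{\rm o}(x;v)$ is positively homogeneous of degree one (immediate by the substitution $\lambda\mapsto\lambda/t$ for $t>0$ in \eqref{generalized_derivative}); together with the convexity already noted this makes $f^{\rm o}(x;\cdot)$ sublinear, and together with $|f^{\rm o}(x;v)|\leq c_f\|v\|_{\tX}$ it is continuous.

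For (i), nonemptiness follows from Hahn--Banach: extend the zero functional on $\{0\}$ to a linear $\xi\in\tX^*$ dominated by the sublinear map $f^{\rm o}(x;\cdot)$, so that $\langle v,\xi\rangle_{\tX,\tX^*}\leq f^{\rm o}(x;v)$ for all $v$, i.e. $\xi\in\pa f(x)$. Convexity is automatic since $\pa f(x)$ is an intersection of the weak-$^*$ closed half-spaces $\{\xi:\langle v,\xi\rangle_{\tX,\tX^*}\leq f^{\rm o}(x;v)\}$; the same description shows $\pa f(x)$ is weak-$^*$ closed, while applying the defining inequality to $v$ and $-v$ gives $\|\xi\|_{\tX^*}\leq c_f$. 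Closed-and-bounded then yields weak-$^*$ compactness by Banach--Alaoglu.

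For (ii), the inequality $\sup_{\xi\in\pa f(x)}\langle v,\xi\rangle_{\tX,\tX^*}\leq f^{\rm o}(x;v)$ is the defining property; to see the supremum is attained and equals $f^{\rm o}(x;v)$, I fix $v$, define a linear functional on $\mathrm{span}\{v\}$ by $tv\mapsto t\,f^{\rm o}(x;v)$, verify it is dominated by $f^{\rm o}(x;\cdot)$ using sublinearity, and extend it by Hahn--Banach to some $\xi_v\in\pa f(x)$ with $\langle v,\xi_v\rangle_{\tX,\tX^*}=f^{\rm o}(x;v)$; attainment then follows from the weak-$^*$ compactness of (i) and the weak-$^*$ continuity of $\xi\mapsto\langle v,\xi\rangle_{\tX,\tX^*}$. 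Property (iii) combines (ii) with separation: the forward implication is immediate, and for the converse I argue by contraposition---if some $\xi_0\in\pa f(x)\setminus B$, then since $B$ is convex and weak-$^*$ compact and $\tX$ is exactly the continuous dual of $(\tX^*,\text{weak-}^*)$, the separation theorem furnishes $v\in\tX$ with $\langle v,\xi_0\rangle_{\tX,\tX^*}>\max_{\zeta\in B}\langle v,\zeta\rangle_{\tX,\tX^*}\geq f^{\rm o}(x;v)$, contradicting $\xi_0\in\pa f(x)$.

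Properties (iv) and (vi) are short. For (iv), restricting the double $\varlimsup$ in \eqref{generalized_derivative} to $h=0$ gives $f^{\rm o}(x;v)\geq\varlimsup_{\lambda\downarrow0}\lambda^{-1}(f(x+\lambda v)-f(x))\geq0$ for every $v$, since local minimality forces each difference quotient to be nonnegative for small $\lambda$; hence $0\in\pa f(x)$. For (vi), the subadditivity $(f+g)^{\rm o}(x;v)\leq f^{\rm o}(x;v)+g^{\rm o}(x;v)$ is inherited from subadditivity of $\limsup$, and with $B:=\pa f(x)+\pa g(x)$---convex and weak-$^*$ compact as a sum of two weak-$^*$ compact convex sets (continuity of addition)---property (ii) gives $\max_{\zeta\in B}\langle v,\zeta\rangle_{\tX,\tX^*}=f^{\rm o}(x;v)+g^{\rm o}(x;v)\geq(f+g)^{\rm o}(x;v)$, so (iii) yields the inclusion. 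The step I expect to be most delicate is (v): weak-$^*$ convergence gives $\langle v,\xi\rangle_{\tX,\tX^*}=\lim_i\langle v,\xi_i\rangle_{\tX,\tX^*}\leq\varlimsup_i f^{\rm o}(x_i;v)$, so everything hinges on the upper semicontinuity $\varlimsup_i f^{\rm o}(x_i;v)\leq f^{\rm o}(x;v)$ in the base point. This is not purely formal: it must be extracted from \eqref{generalized_derivative} by absorbing the perturbation $x_i-x$ into the inner variable $h$, using that the local Lipschitz constant $c_f$ can be taken uniform on a neighborhood of $x$. Making this absorption rigorous---in particular controlling the interplay of the two limits $h\to0$ and $\lambda\downarrow0$ as $x_i\to x$---is the one place where a careful $\varepsilon$-argument is genuinely required.
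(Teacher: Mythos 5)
Your proposal is correct, but note that the paper itself gives no proof of this lemma: it is stated as a collection of known facts with the proofs deferred to Clarke's original paper, and your arguments (Hahn--Banach domination for nonemptiness and the max formula, Banach--Alaoglu for weak-$^*$ compactness, weak-$^*$ separation for (iii), and the sum rule via (ii)--(iii)) are precisely the standard ones found there. Your treatment of (v) correctly isolates the only genuinely delicate point --- the upper semicontinuity $\varlimsup_i f^{\rm o}(x_i;v)\leq f^{\rm o}(x;v)$, obtained by choosing near-maximizing pairs $(h_i,\lambda_i)$ with $\lv h_i\rv_{\tX}+\lambda_i\to 0$ and absorbing $x_i-x$ into the perturbation $h$, using a Lipschitz constant valid on a fixed neighborhood of $x$ --- and that absorption argument is exactly how Clarke closes this step.
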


Using Lemma~\ref{prop_ger_dif}, we have the next result.
\begin{lemma}\label{maxdif}
Let $\mathcal{I}$ be an index set and $(\phi_i)_{i\in\mathcal{I}}$ be a family of uniformly local Lipschitz {\rm(}real-valued{\rm)} functions on $\tX$ {\rm(}i.e., their Lipschitz constants are independent of $i\in\mathcal{I}${\rm)}. Define the mapping $\phi:\tX\mapsto\R$ by $\phi(x)=\sup_{i\in\mathcal{I}}\phi_i(x)$ for $x\in\tX$, and assume that $M(x):=\{i\in\mathcal{I};~\phi_i(x)=\phi(x)\}\neq\varnothing$.
Then, $x\mapsto\phi(x)$ is locally Lipschitz and it holds that $\pa\phi(x)\subset\cl{\co\{\pa \phi_i(x);~i\in M(x)\}}.$
\end{lemma}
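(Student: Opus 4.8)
My plan is to prove the two assertions separately: the local Lipschitz property of $\phi$ follows by a direct sup-of-Lipschitz estimate, while the gradient inclusion will be reduced, via \lemref{prop_ger_dif}(iii), to a single scalar inequality between generalized directional derivatives. For the Lipschitz claim, fix $x_0\in\tX$ and let $c>0$ be a common Lipschitz constant valid for all $\phi_i$ on a ball $B(x_0;r)$. For $y,z\in B(x_0;r)$ and any $i\in\mathcal{I}$ one has $\phi_i(y)\le\phi_i(z)+c\lv y-z\rv_{\tX}\le\phi(z)+c\lv y-z\rv_{\tX}$; taking the supremum over $i$ gives $\phi(y)\le\phi(z)+c\lv y-z\rv_{\tX}$, and by symmetry $|\phi(y)-\phi(z)|\le c\lv y-z\rv_{\tX}$. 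In particular $\phi$ is finite and locally Lipschitz, so $\pa\phi(x)$ and $\phi^{\rm o}(x;\cdot)$ are well defined.

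Next I would set $B:=\cl{\co\{\pa\phi_i(x);~i\in M(x)\}}$, the closure being taken in the weak-$^*$ topology of $\tX^*$, and check that $B$ is an admissible comparison set for \lemref{prop_ger_dif}(iii). It is nonempty since $M(x)\ne\varnothing$ and each $\pa\phi_i(x)\ne\varnothing$ by \lemref{prop_ger_dif}(i); it is convex and weak-$^*$ closed by construction; and since \lemref{prop_ger_dif}(i) gives $\lv\xi\rv_{\tX^*}\le c$ for every $\xi\in\pa\phi_i(x)$ and every $i$, the union $\{\pa\phi_i(x);~i\in M(x)\}$ lies in the closed ball $\cl{B(0;c)}\subset\tX^*$, which is weak-$^*$ compact by Banach--Alaoglu; hence $B$, being a weak-$^*$ closed convex subset of this ball, is weak-$^*$ compact. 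By \lemref{prop_ger_dif}(iii), the inclusion $\pa\phi(x)\subset B$ is therefore equivalent to the scalar inequality $\phi^{\rm o}(x;v)\le\max_{\xi\in B}\langle v,\xi\rangle_{\tX,\tX^*}$ for every $v\in\tX$. Since $\langle v,\cdot\rangle_{\tX,\tX^*}$ is weak-$^*$ continuous, convexification and weak-$^*$ closure do not change its supremum, so using \lemref{prop_ger_dif}(ii) I may rewrite $\max_{\xi\in B}\langle v,\xi\rangle_{\tX,\tX^*}=\sup_{i\in M(x)}\max_{\xi\in\pa\phi_i(x)}\langle v,\xi\rangle_{\tX,\tX^*}=\sup_{i\in M(x)}\phi_i^{\rm o}(x;v)$. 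Everything thus reduces to showing $\phi^{\rm o}(x;v)\le\sup_{i\in M(x)}\phi_i^{\rm o}(x;v)$ for each fixed $v$.

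To attack this last inequality I would pick sequences $h_n\to0$ and $\lambda_n\downarrow0$ realizing the $\varlimsup$ that defines $\phi^{\rm o}(x;v)$, and for each $n$ select a near-maximizing index $i_n\in\mathcal{I}$ with $\phi_{i_n}(x+h_n+\lambda_n v)\ge\phi(x+h_n+\lambda_n v)-\lambda_n^2$. Using $\phi(x+h_n)\ge\phi_{i_n}(x+h_n)$, the difference quotient is then bounded above by $\lambda_n^{-1}\bigl(\phi_{i_n}(x+h_n+\lambda_n v)-\phi_{i_n}(x+h_n)\bigr)+\lambda_n$, i.e. by a difference quotient of the single function $\phi_{i_n}$ plus a vanishing term. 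Moreover the uniform Lipschitz bound together with the continuity of $\phi$ forces $\phi_{i_n}(x)\to\phi(x)$, so the indices $i_n$ are asymptotically maximizing at $x$.

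The main obstacle, as I see it, is precisely to upgrade this asymptotic maximality into genuine membership in $M(x)$ and to pass to the limit in the gradients, since the $i_n$ need not themselves lie in $M(x)$ and the varying functions $\phi_{i_n}$ must ultimately be dominated by $\phi_i^{\rm o}(x;v)$ for some true maximizer $i\in M(x)$. I would close this gap by extracting a convergent subsequence $i_n\to i_\ast$ and applying the weak-$^*$ closedness property \lemref{prop_ger_dif}(v) to the generalized gradients at the base points $x+h_n$, producing a limit functional in $\pa\phi_{i_\ast}(x)$ with $i_\ast\in M(x)$. This is exactly the step where compactness of the index family $\mathcal{I}$ and upper semicontinuity of $i\mapsto\phi_i(\cdot)$ must be invoked, so that the supremum is attained and near-maximizers cluster inside $M(x)$; when $\mathcal{I}$ is finite the argument is automatic, since then a single index recurs along a subsequence and the estimate collapses to the generalized directional derivative of one fixed $\phi_i$ with $i\in M(x)$. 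I expect the justification of this limiting passage, rather than any of the preliminary reductions, to be the crux of the proof.
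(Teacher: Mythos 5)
The parts of your argument that you actually carry out are correct and follow the same route as the paper's proof: the sup-of-Lipschitz estimate for local Lipschitz continuity, the verification that $B:=\cl{\co\{\pa\phi_i(x);~i\in M(x)\}}$ (closure taken in the weak-$^*$ topology) is a nonempty, convex, weak-$^*$ compact subset of $\tX^*$, and the reduction, via \lemref{prop_ger_dif}-(ii) and (iii), of the desired inclusion to the single scalar inequality
\begin{align*}
\phi^{\rm o}(x;v)\;\leq\;\sup_{i\in M(x)}\phi_i^{\rm o}(x;v),\qquad\forall\, v\in\tX.
\end{align*}
This is exactly how the paper proceeds; if anything, your treatment of $B$ is more careful, since the paper infers weak-$^*$ compactness from norm-closedness, convexity and boundedness, which in a general dual space only gives weak (not weak-$^*$) closedness.

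The step you could not finish is a genuine gap, and your diagnosis of it is exactly right --- but the gap sits in the lemma itself, not in your reading of it. Your near-maximizing indices $i_n$ are only asymptotically active, and promoting them to members of $M(x)$ requires a topology on $\mathcal{I}$, compactness, and upper semicontinuity of $i\mapsto\phi_i$, none of which are hypotheses here (for an abstract index set the phrase ``$i_n\to i_\ast$'' is not even meaningful). The paper hides the same difficulty behind the assertion that the scalar inequality ``can be directly shown from the definition''; it cannot, and in fact it is false in general. Take $\tX=\R$, $\mathcal{I}=\mathbb{N}$, $\phi_i(x)=\max(x-1/i,0)$: these are uniformly $1$-Lipschitz, $\phi(x)=\max(x,0)$, and at $x=0$ every index is active with $\pa\phi_i(0)=\{0\}$ (each $\phi_i$ vanishes on a neighborhood of $0$), yet $\phi^{\rm o}(0;1)=1>0=\sup_{i\in M(0)}\phi_i^{\rm o}(0;1)$ and $\pa\phi(0)=[0,1]\not\subset\{0\}$. (Requiring attainment of the sup at every point does not rescue the statement either: replace $\phi_i$ by $\max(\tfrac12(x-1/i),0)$ and adjoin the tent functions $g_a(x)=\tfrac{a}{2}-|x-a|$, $a>0$, which attain the sup $\tfrac12 x^+$ at $x=a$ but are inactive at $0$.) So the lemma as stated holds only under extra structure: $\mathcal{I}$ finite (Clarke's Proposition 9, cf.\ \remref{remark:Ifinite}), or $\mathcal{I}$ compact with suitable upper semicontinuity in the index --- which is what the paper actually has in its application, where $\mathcal{I}=\{y_i^*\in K_i^+;~\lv y_i^*\rv_{\tY_i^*}=1\}$ and $\phi_{y_i^*}=\langle g_i(\cdot),y_i^*\rangle_{\tY_i,\tY_i^*}$ is weak-$^*$ continuous in the index. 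In short, your proof is incomplete as written, but the missing step is unprovable under the stated hypotheses; identifying that, rather than glossing over it, is the real content of your analysis.
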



\begin{remark}\label{remark:Ifinite}
When the index set $\mathcal{I}$ is finite, the right-hand side of the second assertion in \lemref{maxdif} can be replaced by $\co\{\pa\phi_i(x);~i\in M(x)\}$, where $M(x)=\{i\in\mathcal{I};~\phi_i(x)=\phi(x)\}$ is nonempty (c.f. Proposition 9 in Clarke \cite{Clarke}).
\end{remark}

Next, we introduce the distance between a point $x\in C$ and the set $C$ by $d_C(x):=\inf_{c\in C}\lv x-c\rv_{\tX}$. It is easy to see that  $x\mapsto d_C(x)$ is 1-Lipschitz, and hence the generalized gradient $\pa d_C(x)$ is well-defined. Because of the convexity of $C$, for any $x\in C$, the normal cone $N_C(x)$ to the set $C$ at the point $x$ can be defined by 
\begin{align}\label{eq:NCX}
N_C(x):=\{\xi\in\tX^*;~\langle x-c,\xi\rangle_{\tX,\tX^*}\geq 0,~\forall c\in C\}. 
\end{align}
It then follows from Clarke \cite{Clarke} that a point $x\in\tX$ minimizes $f$ over $C$ iff  there exists a neighborhood $V(x)$ of $x$ such that $x$ minimizes $f+c_fd_C$ over $V(x)$. Next, for a cone $K_i\subset \tY_i$ with $i\in I$, let us define
\begin{align}\label{eq:K+}
K_i^{+}:=\left\{\xi\in\tY_i^*;~\langle y,\xi\rangle_{\tY_i,\tY_i^*}\geq 0,~\forall y\in K_i\right\},\quad \forall i\in I.    
\end{align}
Then, it is not difficult to show the following dual relationship between $K_i\subset\tY_i$ and $K_i^+\subset\tY_i^*$:
\begin{lemma}\label{dual}
Let $K_i\subset\tY_i$ be a closed and convex cone for $i\in I$. It holds that
\begin{align}\label{KL}
K_i=\left\{y\in\tY_i;~\langle y,\xi\rangle_{\tY_i,\tY_i^*}\geq 0,~\forall \xi\in K_i^+\right\},\quad\forall i\in I.  
\end{align}
\end{lemma}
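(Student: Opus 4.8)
The plan is to recognize \eqref{KL} as the bipolar identity for the closed convex cone $K_i$ and to establish it via the Hahn--Banach separation theorem. Fix $i\in I$ and write $K:=K_i$, $\tY:=\tY_i$, and denote by $K^{++}$ the set on the right-hand side of \eqref{KL}. The proof splits into the two inclusions $K\subseteq K^{++}$ and $K^{++}\subseteq K$.

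First I would dispose of the trivial inclusion $K\subseteq K^{++}$: if $y\in K$, then by the very definition \eqref{eq:K+} of $K_i^+$ we have $\langle y,\xi\rangle_{\tY,\tY^*}\geq 0$ for every $\xi\in K_i^+$, so $y\in K^{++}$ by the defining property of the right-hand side of \eqref{KL}.

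The substantial direction is $K^{++}\subseteq K$, which I would prove by contraposition. Suppose $y_0\notin K$. Since $K$ is closed and convex while $\{y_0\}$ is compact and convex with $\{y_0\}\cap K=\varnothing$, the strict separation form of the Hahn--Banach theorem provides (after possibly replacing $\xi$ by $-\xi$) some $\xi\in\tY^*$ with
\[
\langle y_0,\xi\rangle_{\tY,\tY^*}<\inf_{y\in K}\langle y,\xi\rangle_{\tY,\tY^*}.
\]
I would then exploit the cone structure of $K$ to pin down this infimum. Because $K$ is a cone, $0\in K$, so the infimum is at most $0$; moreover, if $\langle \bar y,\xi\rangle_{\tY,\tY^*}<0$ for some $\bar y\in K$, then $t\bar y\in K$ for all $t>0$ would force the infimum to equal $-\infty$, contradicting the finiteness of $\langle y_0,\xi\rangle_{\tY,\tY^*}$. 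Consequently $\langle y,\xi\rangle_{\tY,\tY^*}\geq 0$ for all $y\in K$, i.e. $\xi\in K_i^+$, and the infimum equals $0$. The separation inequality then reads $\langle y_0,\xi\rangle_{\tY,\tY^*}<0$ for this $\xi\in K_i^+$, whence $y_0\notin K^{++}$. This proves $K^{++}\subseteq K$ and completes the argument.

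The only genuine obstacle is the appeal to strict separation: since $\tY$ is merely a (separable) Banach space rather than finite-dimensional, I must invoke the version of Hahn--Banach that strictly separates a compact convex set from a disjoint closed convex set in order to obtain the strict inequality above. Once that functional is in hand, the remaining cone-homogeneity argument that places it in $K_i^+$ and forces the infimum to be $0$ is routine.
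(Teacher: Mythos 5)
Your proposal is correct and follows essentially the same route as the paper: the easy inclusion from the definition of $K_i^+$, and then Hahn--Banach separation of the point $y_0\notin K_i$ from the closed convex cone $K_i$, using cone homogeneity to conclude the separating functional lies in $K_i^+$ with threshold $0$. The paper compresses this into a direct contradiction argument where the separation theorem is invoked to give $\langle y,\xi\rangle<0\leq\langle x,\xi\rangle$ for all $x\in K_i$ at once; your explicit derivation of why the infimum over the cone must equal $0$ is exactly the detail the paper leaves implicit.
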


We next characterize the interior $K_i^{\rm o}$ of the closed and convex cone $K_i$ in the next lemma, {whose proof is reported in Appendix \ref{appendix}.}
\begin{lemma}\label{interior}
A point $y\in K_i^{\rm o}$ iff $\langle y,\xi\rangle_{\tY_i,\tY_i^*}>0$ for all nonzero $\xi\in K_i^+$.
\end{lemma}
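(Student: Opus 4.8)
The plan is to prove the two implications separately, dispatching the ``only if'' direction by a direct ball argument and the ``if'' direction by contraposition through a separation theorem.

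First I would treat the ``only if'' direction. Suppose $y\in K_i^{\rm o}$, so that $B(y;r)\subset K_i$ for some $r>0$, and fix a nonzero $\xi\in K_i^+$. For every $v\in\tY_i$ with $\|v\|_{\tY_i}<1$ we have $y+rv\in K_i$, hence $\langle y+rv,\xi\rangle_{\tY_i,\tY_i^*}\geq 0$ by the definition of $K_i^+$, which rearranges to $\langle y,\xi\rangle_{\tY_i,\tY_i^*}\geq -r\langle v,\xi\rangle_{\tY_i,\tY_i^*}$. Taking the supremum over the open unit ball and using the symmetry of the ball together with $\sup_{\|v\|_{\tY_i}<1}\langle v,\xi\rangle_{\tY_i,\tY_i^*}=\|\xi\|_{\tY_i^*}$, I obtain $\langle y,\xi\rangle_{\tY_i,\tY_i^*}\geq r\|\xi\|_{\tY_i^*}>0$, where the strict inequality uses $\xi\neq 0$. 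This is exactly the asserted strict positivity.

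For the ``if'' direction I would argue by contraposition: assuming $y\notin K_i^{\rm o}$, I would exhibit a nonzero $\xi\in K_i^+$ with $\langle y,\xi\rangle_{\tY_i,\tY_i^*}\leq 0$. If $y\notin K_i$, strong separation of the (compact) singleton $\{y\}$ from the closed convex set $K_i$ produces a nonzero functional $\xi$; the cone structure of $K_i$ (stability under positive scaling, letting the scale tend to $\infty$) forces $\langle z,\xi\rangle_{\tY_i,\tY_i^*}\geq 0$ for all $z\in K_i$, i.e. $\xi\in K_i^+$, while separation together with $0\in K_i$ gives $\langle y,\xi\rangle_{\tY_i,\tY_i^*}<0$. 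If instead $y\in K_i\setminus K_i^{\rm o}$, I would separate the point $y$ from the open convex set $K_i^{\rm o}$ via the geometric Hahn--Banach theorem, obtaining a nonzero $\xi$ with $\langle z,\xi\rangle_{\tY_i,\tY_i^*}<\langle y,\xi\rangle_{\tY_i,\tY_i^*}$ for all $z\in K_i^{\rm o}$, and then pass to the closure, using $\cl{K_i^{\rm o}}=\cl{K_i}=K_i$, to extend the weak inequality $\langle z,\xi\rangle_{\tY_i,\tY_i^*}\leq\langle y,\xi\rangle_{\tY_i,\tY_i^*}$ to all of $K_i$. The cone argument then gives $-\xi\in K_i^+$ and, taking $z=0$, $\langle y,-\xi\rangle_{\tY_i,\tY_i^*}\leq 0$. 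In either subcase the linear functional $\langle y,\cdot\rangle_{\tY_i,\tY_i^*}$ fails to be strictly positive on $K_i^+\setminus\{0\}$, which is the contrapositive of the claim.

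The hard part will be the boundary subcase $y\in\pa K_i$ of the reverse direction in the infinite-dimensional setting, since the existence of a nontrivial supporting functional at $y$ is guaranteed by the geometric form of Hahn--Banach only when $K_i^{\rm o}\neq\varnothing$ (which is precisely the relevant regime, as the statement characterizes interior points). I would therefore be careful to separate from the \emph{open} convex set $K_i^{\rm o}$ and to recover the inequality on all of $K_i$ by the closure identity $\cl{K_i^{\rm o}}=K_i$, rather than attempting to separate directly from a set whose interior might be empty.
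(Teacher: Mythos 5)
Your proof is correct wherever the lemma itself is correct, and your ``if'' direction takes a genuinely different route from the paper's. The ``only if'' direction is essentially the paper's ball argument, just run as a direct quantitative estimate ($\langle y,\xi\rangle_{\tY_i,\tY_i^*}\geq r\lv\xi\rv_{\tY_i^*}$) rather than as a contradiction. For the ``if'' direction, the paper argues by contradiction: taking $y_k\to y$ with $y_k\notin K_i$, it produces unit-norm $\xi_k\in K_i^+$ with $\langle y_k,\xi_k\rangle_{\tY_i,\tY_i^*}<0$ via \lemref{dual}, then extracts a weak-$^*$ convergent subsequence using the separability of $\tY_i$ (\lemref{Banach}-(i)) and passes to the limit. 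Your contrapositive replaces this compactness step with two applications of Hahn--Banach (strong separation when $y\notin K_i$; a supporting functional for the nonempty open convex set $K_i^{\rm o}$ when $y\in\pa K_i$, extended to all of $K_i$ via $\cl{K_i^{\rm o}}=K_i$). Your route dispenses with the separability of $\tY_i$ entirely; the paper's route avoids any case split but, notably, has a silent gap at exactly the point you flag: the weak-$^*$ limit $\xi$ of the unit-norm sequence $(\xi_k)$ can be $0$, and $\xi=0$ contradicts nothing, since the hypothesis only concerns nonzero elements of $K_i^+$.

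Your closing caveat about $K_i^{\rm o}=\varnothing$ is substantive, not cosmetic, although your justification (``the statement characterizes interior points'') is not quite the right one: when $K_i^{\rm o}=\varnothing$ the ``if'' direction is not vacuous but actually false, so no proof can cover that case. For instance, with $\tY_i=\ell^2$, $K_i=\{x\in\ell^2;~x_n\geq0,~\forall n\}$ (so that $K_i^+=K_i$ and $K_i^{\rm o}=\varnothing$) and $y=(2^{-n})_{n\geq1}$, one has $\langle y,\xi\rangle>0$ for every nonzero $\xi\in K_i^+$ while $y\notin K_i^{\rm o}$. This is not an idle example for the paper, since $K_2=\Lb^2_{\Fb,\R+}$ in \equref{eq:K12} is a cone of exactly this type. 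When $K_i^{\rm o}\neq\varnothing$, both arguments work: yours as written, and the paper's once one patches the gap by pairing $\xi_k$ against an interior point $y_0$ (the ball estimate gives $\langle y_0,\xi_k\rangle_{\tY_i,\tY_i^*}\geq\delta$, which survives the weak-$^*$ limit and forces $\xi\neq0$). So your explicit dependence on nonempty interior is an honest accounting of what the lemma needs, and your separation argument is sound in that regime.
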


\subsection{The generalized FJ optimality condition on Banach spaces}

In this subsection, we establish both necessary and sufficient optimality conditions for problem \eqref{banach_space_optimization} with Banach space valued constraints, namely the generalized FJ optimality condition.

We first state the generalized FJ necessary optimality condition for problem \eqref{banach_space_optimization}.
\begin{theorem}[FJ necessary condition]\label{fj_condition}
Let $\widehat{x}\in C$ solve problem \eqref{banach_space_optimization} locally. Then, there exist some $r_0\in\R_+$, $\lambda_i\in\tY_i^*$ for $i\in I$, and $\mu_j\in \tH_j$ for $j\in J$, not all zero, such that
\begin{itemize}
\item[{\rm(i)}] {\rm(dual feasible)} $r_0\geq 0$ and $\lambda_i\in K_i^+$ for all $i\in I$,

\item[{\rm(ii)}] {\rm(complementary slackness)} $\langle g_i(\widehat x),\lambda_i\rangle_{\tY_i,\tY_i^*}=0$, for all $i\in I$,

\item[{\rm(iii)}] {\rm(non-trivial multipliers)} $ |r_0|+\sum_{i\in I}\lv\lambda_i\rv_{\tY_i^*}+\sum_{j\in J}\lv \mu_j\rv_{\tH_j}= 1$, where $\lv\cdot\rv_{\tH_j}$ is the norm induced by the inner product $\langle\cdot,\cdot\rangle_{\tH_j}$,

\item[{\rm(iv)}] {\rm(Fritz-John condition)} there exists $\xi\in N_C(\widehat{x})$ such that
\begin{align}\label{FJ}
	-\xi= r_0 Df(\widehat{x})+\sum_{i\in I}\lambda_i\circ Dg_i(\widehat{x})-\sum_{j\in J} \mu_j\circ Dh_j(\widehat{x}),
\end{align}
where $\lambda_i\circ Dg_i(\widehat{x})\in\tX^*$ is the composition of $\lambda_i\in\tY_i^*$ and $Dg_i(\widehat{x})\in\mathcal{L}(\tX;\tY_i)$. Similar notation holds for $\mu_j\circ Dh_j(\widehat{x})\in\tX^*$.
\end{itemize}
\end{theorem}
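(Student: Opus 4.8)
The plan is to convert the constrained local optimality of $\widehat x$ into an \emph{unconstrained} generalized-stationarity statement for a single scalar merit function, and then read off the multipliers $(r_0,\lambda_i,\mu_j)$ from the generalized gradient by means of the cone-duality lemmas. Concretely, I would introduce
\[
\psi(x):=\max\Big\{f(x)-f(\widehat x),\ \max_{i\in I}\sup_{\lambda\in K_i^+,\,\lv\lambda\rv_{\tY_i^*}\le1}\langle g_i(x),\lambda\rangle_{\tY_i,\tY_i^*},\ \max_{j\in J}\sup_{\nu\in\tH_j,\,\lv\nu\rv_{\tH_j}\le1}\langle h_j(x),\nu\rangle_{\tH_j}\Big\}.
\]
By \lemref{dual}, for a feasible point every inner supremum is nonnegative and vanishes (since $-g_i(\widehat x)\in K_i$ forces $\langle g_i(\widehat x),\lambda\rangle\le0$, and $h_j(\widehat x)=0$), so $\psi(\widehat x)=0$. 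The local optimality of $\widehat x$ for \eqref{banach_space_optimization} then forces $\psi(x)\ge0=\psi(\widehat x)$ for every $x\in C$ near $\widehat x$: feasible competitors keep the constraint terms nonpositive while the objective term is nonnegative, whereas any $x$ violating a constraint makes some supremum strictly positive (again by \lemref{dual}). Hence $\widehat x$ is a local minimizer of $\psi$ over $C$.

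Next I would invoke the exact-penalty principle recalled after \eqref{eq:NCX}: $\widehat x$ minimizes $\psi+L\,d_C$ over a neighborhood, where $L$ is a local Lipschitz constant of $\psi$. Then \lemref{prop_ger_dif}-(iv) gives $0\in\pa(\psi+L\,d_C)(\widehat x)$, and the sum rule \lemref{prop_ger_dif}-(vi) together with $\pa(L\,d_C)(\widehat x)=L\,\pa d_C(\widehat x)\subset N_C(\widehat x)$ produces some $\xi\in N_C(\widehat x)$ with $-\xi\in\pa\psi(\widehat x)$. The generalized gradient of $\psi$ is then computed by \lemref{maxdif}: each inner map $x\mapsto\langle g_i(x),\lambda\rangle$ and $x\mapsto\langle h_j(x),\nu\rangle$ is continuously Fr\'echet differentiable with derivative $\lambda\circ Dg_i(\widehat x)$, respectively $\nu\circ Dh_j(\widehat x)$, and these families are uniformly Lipschitz because the dual multipliers are norm-bounded. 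Consequently $\pa\psi(\widehat x)$ lies in the weak-$^*$ closed convex hull of $\{Df(\widehat x)\}$ together with the active derivative sets; grouping the contributions index by index (using convexity of $K_i^+$ and of the unit ball of $\tH_j$) rewrites $-\xi$ in the exact form \eqref{FJ} with $r_0\ge0$, $\lambda_i\in K_i^+$ and $\mu_j\in\tH_j$. (Alternatively, replacing the inner suprema by the scalar gap functions and forming the \emph{finite} max of $f-f(\widehat x)$, the $\dist(g_i(\cdot),-K_i)$ and the $\lv h_j(\cdot)\rv_{\tH_j}$, \remref{remark:Ifinite} yields a genuine finite convex combination, trading the weak-$^*$ closure for a nonsmooth chain rule through the distance functions.)

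Conditions (i) and (ii) should then come for free from the structure of the active set. Dual feasibility holds because every admissible direction is drawn from the cone $K_i^+$, which is preserved under the nonnegative scaling induced by the convex weights. For complementary slackness, the active index set of $\psi$ at $\widehat x$ consists precisely of those pairs $(i,\lambda)$ with $\langle g_i(\widehat x),\lambda\rangle_{\tY_i,\tY_i^*}=0$; for a strictly feasible constraint, i.e.\ $-g_i(\widehat x)\in K_i^{\rm o}$, \lemref{interior} gives $\langle g_i(\widehat x),\lambda\rangle<0$ for every nonzero $\lambda\in K_i^+$, so such a constraint is inactive and contributes $\lambda_i=0$, while for the binding constraints the active-set condition is exactly $\langle g_i(\widehat x),\lambda_i\rangle_{\tY_i,\tY_i^*}=0$.

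The step I expect to be the crux is the non-triviality normalization (iii). The convex-combination coefficients produced by \lemref{maxdif} sum to one, so they are never all zero; the difficulty is that the multiplier \emph{norms} $\lv\lambda_i\rv_{\tY_i^*}$ and $\lv\mu_j\rv_{\tH_j}$ need not add up to one, because the weak-$^*$ limits implicit in the closed convex hull make the norm only lower-semicontinuous and may let dual mass escape onto vanishing directions. To control this I would normalize the dual directions to the unit sphere, so that at the finite level the coefficients coincide with the multiplier norms and sum to exactly one, and then pass to the limit using \lemref{Banach}-(i) (separability of $\tY_i$) and the reflexivity of $\tH_j$ for weak-$^*$ sequential compactness of the normalized multipliers, keeping the limit inside $\pa\psi(\widehat x)$ via \lemref{prop_ger_dif}-(v). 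The delicate point is to show the total dual mass cannot escape entirely: the finite-dimensional vector of masses lives on the unit simplex and converges to a nonzero limit, while the strict-feasibility dichotomy of \lemref{interior} discards the inactive constraints; reconciling this with the possible norm drop under weak-$^*$ convergence, and finally renormalizing to achieve $|r_0|+\sum_{i\in I}\lv\lambda_i\rv_{\tY_i^*}+\sum_{j\in J}\lv\mu_j\rv_{\tH_j}=1$, is where the real work lies.
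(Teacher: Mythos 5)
Your reduction to unconstrained generalized stationarity of the merit function $\psi$ is sound as far as it goes: $\psi(\widehat x)=0$, local optimality gives $\psi\ge 0$ on $C$ near $\widehat x$, the exact-penalty step with \lemref{prop_ger_dif}-(iv),(vi) produces $\xi\in N_C(\widehat x)$ with $-\xi\in\pa\psi(\widehat x)$, and \lemref{maxdif} plus your grouping argument yields (i), (ii) and the \emph{form} of \equref{FJ}. But the proposal does not prove the theorem, because it never establishes (iii) --- nor even the weaker claim that the multipliers are not all zero --- and you concede this yourself. That is not a bookkeeping detail: conditions (i), (ii), (iv) are trivially satisfied by $r_0=0$, $\lambda_i=0$, $\mu_j=0$, $\xi=0$, so non-triviality is the entire content of a Fritz--John theorem. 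Moreover the gap is structural for your construction, not merely unfinished. Because you work at $\widehat x$ itself, where $\psi(\widehat x)=0$, the zero dual element is always among the active generators: your ball-normalized inner suprema attain their value $0$ at $\lambda=0$, and since $h_j(\widehat x)=0$ the map $x\mapsto\lv h_j(x)\rv_{\tH_j}$ is a global minimum there, so its generalized gradient contains $0$ (indeed it is contained in $\{\nu\circ Dh_j(\widehat x);\ \lv\nu\rv_{\tH_j}\le 1\}$, with $0$ inside). Hence the inclusion $-\xi\in\cl{\co\{\cdots\}}$ is perfectly compatible with the all-zero representation, and your proposed fix of normalizing dual directions to the unit sphere does not help: the max rule forces you to take convex hulls, and convex combinations of unit-sphere elements paired to the \emph{zero} value (which is exactly the situation at $\widehat x$ for binding $g_i$ and for every $h_j$) can have norm strictly less than one, or vanish outright, so a positive convex weight can contribute nothing to the multiplier norm.

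The ingredient you are missing is the paper's use of Ekeland's variational principle (\lemref{Ekeland}). The paper perturbs the objective term to $f(x)-f(\widehat x)+\epsilon$, notes that $\widehat x$ is then an $\epsilon$-minimum of the resulting merit function $F$ over $C$, and obtains a nearby point $z_\epsilon$ at which, crucially, $F(z_\epsilon)>0$: otherwise $z_\epsilon$ would be feasible with $f(z_\epsilon)\le f(\widehat x)-\epsilon$, contradicting local optimality. Strict positivity is precisely what removes the degeneracy described above: every piece active at $z_\epsilon$ carries a unit-norm dual direction --- the sphere maximizers $y_i^*$ for the $g_i$ terms (and when the sup value is positive, convex combinations of such maximizers remain unit-norm, otherwise rescaling would exceed the sup), and the single element $h_j(z_\epsilon)/\lv h_j(z_\epsilon)\rv_{\tH_j}$ for the $h_j$ terms, since $\lv h_j(\cdot)\rv_{\tH_j}$ is genuinely Fr\'echet differentiable where it is nonzero. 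Consequently $\lv\lambda_i^\epsilon\rv_{\tY_i^*}=\phi_i^\epsilon$ and $\lv\mu_j^\epsilon\rv_{\tH_j}=\psi_j^\epsilon$, the convex weights coincide with the multiplier norms, and $r_0^\epsilon+\sum_{i\in I}\lv\lambda_i^\epsilon\rv_{\tY_i^*}+\sum_{j\in J}\lv\mu_j^\epsilon\rv_{\tH_j}=1$ holds exactly for every $\epsilon$ before any limit is taken; one then sends $\epsilon\to0$ using \lemref{prop_ger_dif}-(v) and \lemref{Banach}. (The residual concern you raise about norms dropping under weak-$^*$ limits pertains to the paper's final limiting step as well, but the paper at least secures the exact pre-limit normalization, which your construction cannot produce at any stage.) In short: your skeleton recovers (i), (ii), (iv), but without the Ekeland-type perturbation to a point where the merit function is strictly positive, the approach cannot deliver (iii), and that is the heart of the theorem.
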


\begin{remark}
When $\widehat{x}\in C^{\rm o}$ in \thmref{fj_condition}, we have $N_C(\widehat{x})=\{0\}$, which yields that the element $\xi$ stated in \thmref{fj_condition}-(iv) can only be zero. {It is worth noting that the FJ condition obtained in Theorem \ref{fj_condition} relies crucially on the convexity of the set $C$. In particular, the derivation of the first-order optimality conditions of the C-MFC problem \eqref{cost_func} studied in Section \ref{sec:CSCP} is based on variational arguments that require the ability to compute the derivatives w.r.t. the control process. When the control space is convex, one can consider convex perturbations of the control and naturally define Fr\'{e}chet derivatives w.r.t. the control process. This allows us to derive the optimality conditions through the FJ condition. However, when the control space is non-convex, such perturbations are generally not admissible. In the classical SMP,  spike variation is invoked as an alternative to derive necessary optimality conditions in the non-convex case. However, it does not permit a simple notion of derivative w.r.t. the control process as in the convex case. Hence, the technique of spike variation is not suitable to derive some clean FJ conditions based on derivatives. Due to these technical obstacles, establishing the similar optimality conditions over non-convex action set remains an open problem, which will be left for future study. 

}
\end{remark}



\begin{proof}[Proof of \thmref{fj_condition}]
Let $\epsilon\in(0,1)$ and $F:\tX\to\R$ be defined by, for $x\in C$,
\begin{align*}
F(x):=\max\left\{f(x)-f(\widehat{x})+\epsilon,~ \!\!\!\!\sup_{\substack{y_i^*\in K_i^+\\ \lv y_i^*\rv_{\tY_i^*}=1}}\langle g_i(x),y_i^*\rangle_{\tY_i,\tY_i^*},~\lv h_j(x)\rv_{\tH_j};~i\in I,~j\in J\right\}.    
\end{align*}
Then, $x\mapsto F(x)$ is locally Lipschitz and bounded below by $0$. As $\widehat{x}$ solves the problem \eqref{banach_space_optimization} locally, we have $F(\widehat{x})=\epsilon$, 
and $\widehat{x}$ is an $\epsilon$-minimum of $x\mapsto F(x)$ over $C$. Thus, we can apply Ekeland's variation principle given by Theorem 1.1 in \cite{Ekeland} to conclude the existence of $z_{\epsilon}\in C$ such that the corresponding assertions in Ekeland's variation principle hold. 

Note that $F(z_{\epsilon})>0$, otherwise $z_{\epsilon}\in C$ would be a feasible point due to \lemref{dual}, which contradicts the fact that $\widehat{x}$ solves problem \eqref{banach_space_optimization} locally. By the statement under \remref{remark:Ifinite},     
one can deduce that $z_{\epsilon}$ is a local minimum for the mapping $x\mapsto F(x)+\sqrt{\epsilon}\lv x-z_{\epsilon}\rv_{\tX}+c_Fd_C(x)$ with $c_F$ being the local Lipschitz constant of the mapping $F$ near $\widehat{x}$. 

In lieu of \lemref{prop_ger_dif} and \lemref{maxdif}, there exists a convex combination $(r_0^{\epsilon},\phi_i^{\epsilon},\psi_j^{\epsilon})$ for $i\in I$ and $j\in J$ such that the set
\begin{align*}
r_0^{\epsilon}Df(z_{\epsilon})+\sum_{i\in I}\phi_i^{\epsilon} \pa \sup_{\substack{y_i^*\in K_i^+\\ \lv y_i^*\rv_{\tY_i^*}=1}}\langle g_i(z_{\epsilon}),y_i^*\rangle_{{\tY_i},{\tY_i}^*}-\sum_{j\in J}\psi_j^{\epsilon}\pa\lv h_j(z_{\epsilon})\rv_{\tH_j}+c_F\pa d_C(z_{\epsilon})   
\end{align*}
intersects $\sqrt{\epsilon} B_1^*$ where $B_1^*$ is the unit ball in $\tX^*$, and $\phi_i^{\epsilon}=0$ if $g_i(z_{\epsilon})\leq_{K_i} 0$ for $i\in I$ and $\psi_j^{\epsilon}=0$ if $h_j(z_{\epsilon})=0$ for $j\in J$. It is easy to verify that $x\mapsto\lv h_j(x)\rv_{\tH_j}$ is continuously Fr\'{e}chet differentiable and $D\lv h_j(x)\rv_{\tH_j}=(h_j(x)/\lv h_j(x)\rv_{\tH_j})\circ Dh_j(x)$ when $h_j(x)\neq0$ for $i\in I$. Thus, let us set $\mu_j^{\epsilon}:=-(h_j(z_{\epsilon})/\lv h_j(z_{\epsilon})\rv_{\tH_j})\psi_j^{\epsilon}\in \tH_j$ 
when $h_j(z_{\epsilon})=0$, and set $\mu_j^{\epsilon}:=0$ otherwise, for $j\in J$. On the other hand, we have 
$D\langle g_i(x),y_i^*\rangle_{\tY_i,\tY_i^*}=y_i^*\circ Dg_i(x)\in \tX^*$ 
for $x\in\tX$. We claim that the set $\{y_i^*\circ Dg_i(x);~y_i^*\in M(x)\}$ is closed and convex, where $M_{i}(x)$ is the maximizer set given by
\begin{align*}
M_i(x)=\left\{y_i^*\in \{y_i^*\in K_i^+;~\lv y_i^*\rv_{\tY_i^*}=1\};~\langle g_i(x),y_i^*\rangle_{\tY_i,\tY_i^*}=\sup_{\substack{y_i^*\in K_i^+\\ \lv y_i\rv_{\tY_i^*}=1}}\langle g_i(x),y_i^*\rangle_{\tY_i,\tY_i^*}\right\}. 
\end{align*}
As $\langle g_i(x),y_i^*\rangle_{\tY_i,\tY_i^*}$ is weak-$*$ continuous in $y_i^*$ for any $x\in\tX$ and $\{y_i^*\in K_i^+;~\lv y_i^*\rv_{\tY_i^*}=1\}$ is weak-$*$ compact due to Alaogu's theorem, $M_i(x)$ is nonempty, and moreover it can be easily verified to be convex and weak-$*$ closed (and hence closed). The convexity of $\{y_i^*\circ Dg_i(x);~y_i^*\in M_i(x)\}$ follows from $Dg_i(x)\in\mathcal{L}(\tX,\tY_i)$ and the convexity of $M_i(x)$. To show the closedness, let us assume that $y_i^{n^*}\circ Dg_i(x)\to x^*$ in $\tX^*$ with $y_i^{n^*}\in M_i(x)$. As the latter set is bounded, according to the separability, there exists a subsequence $(y_i^{n_k^*})$ of $(y_i^{n^*})$ weakly-$^*$ converging to some $y^*\in \tY_i^*$ (in fact $y^*\in M_i(x)$ in view that $M_i(x)$ is weak-$^*$ closed). Thus, we can conclude that
\begin{align*}
y^*\circ Dg_i(x)(x')=\lim_{n_k\to\infty}y_i^{n_k^*}\circ Dg_i(x) (x')=\langle x', x^* \rangle_{\tX,\tX^*},\quad \forall x'\in \tX. 
\end{align*}
Hence, we have $x^*= y^*\circ Dg_i(x)$. It then follows from \lemref{maxdif} again that, for $i\in I$,
\begin{align*}
\pa \sup_{\substack{y_i^*\in K_i^+\\ \lv y_i\rv_{\tY_i^*}=1}}\langle g_i(x),y_i^*\rangle_{\tY_i,\tY_i^*}&\subset\cl{\co\left\{y_i^*\circ Dg_i(x);~y_i\in M_i(x)\right\}}=\left\{y_i^*\circ Dg_i(x);~y_i\in M_i(x)\right\}.\nonumber  
\end{align*} 

We then deduce that the element in $\pa \sup_{{y_i^*\in K_i^+,~\lv y_i\rv_{\tY_i^*}=1}}\langle g_i(x),y_i^*\rangle_{\tY_i,\tY_i^*}$ can be expressed in the form of $y_i^*\circ Dg_i(x)$ with $y_i^*\in M_i(x)$, and thus $\|y_i^*\|_{\tY_i^*}=1$. Let us set $\lambda_i^{\epsilon}=\phi_i^{\epsilon}y_i^*\in K_i^+$ with $y_i^*$ given above for $i\in I$. Recall $\phi_i^{\epsilon}=0$ when $g_i(z_{\epsilon})\leq_{K_i}0$, and it thus holds that $\exists~\xi_{\epsilon}\in\tX^*$ such that
\begin{align*}
\begin{cases}
	\displaystyle -\xi_{\epsilon}= r_0^{\epsilon}Df(z_{\epsilon})+\sum_{i\in I}\lambda_i^{\epsilon}\circ Dg_i(z_{\epsilon})-\sum_{j\in J}\mu_j^{\epsilon}\circ Dh_j(z_{\epsilon}),\\[0.8em]
	\displaystyle       ~~\xi_{\epsilon}\in c_F\pa d_C(z_{\epsilon})-\sqrt{\epsilon}B_1^*,\\[0.6em]
	\displaystyle    ~~   r_0^{\epsilon}\geq 0,~\lambda_i^{\epsilon}\in K_i^+,~\lambda_i^{\epsilon}=0 \text{ if } g_i(z_{\epsilon})\leq_{K_i} 0,~ \forall i\in I,\\[0.8em]
	\displaystyle \langle g_i(z_{\epsilon}),\lambda_i^{\epsilon}\rangle_{\tY_i,\tY_i^*}=\phi_i^{\epsilon}\sup_{\substack{y_i^*\in K_i^+\\ \lv y_i^*\rv_{\tY_i^*}=1}}\langle g_i(z_{\epsilon}),y_i^*\rangle_{\tY_i,\tY_i^*}\leq F(z_{\epsilon})\leq\epsilon,~\forall i\in I,\\[1.8em]
	\displaystyle ~~r_0^{\epsilon}+\sum_{i\in I}\|\lambda_i^{\epsilon}\|_{\tY_i^*}+\sum_{j\in J}\lv\mu_j^{\epsilon}\rv_{\tH_j}=1.  
\end{cases}
\end{align*}
The last equality results from the fact that $(r_0^{\epsilon},\phi_i^{\epsilon},\psi_j^{\epsilon})$ for $i\in I$ and $j\in J$ is a convex combination and they multiply functionals with unit norms. Now, let $\epsilon\to 0$, by the definition of $z_{\epsilon}\in C$, we have $z_{\epsilon}\to\widehat{x}$. In view of the boundedness, we may assume that $(\xi_{\epsilon},\lambda_i^{\epsilon},\mu_j^{\epsilon})$ converges to $(\xi,\lambda_i,\mu_j)$ in the weak-$*$ topology and $r_0^{\epsilon}$ converges to $r_0\geq 0$ (by a subsequence if necessary) as $\epsilon\to0$. On the other hand, \lemref{prop_ger_dif}-(v) implies both \equref{FJ} and the relation $\xi\in c_F\pa d_C(\widehat{x})\subset N_C(\widehat{x})$. In addition, $\lambda_i\in K_i^+$ as $K_i^+$ is closed and convex for $i\in I$. Obviously,  \thmref{fj_condition}-(iii) holds and it suffices to show (ii). For any $i\in I$, if $g_i(z_{\epsilon})\leq_{K_i}0$, then $\langle g_i(z_{\epsilon}),\lambda_i^{\epsilon}\rangle_{\tY_i,\tY_i^*}=0$, and if $-g_i(z_{\epsilon})\notin K_i$, then \lemref{dual} results in $\langle g_i(z_{\epsilon}),\lambda_i^{\epsilon}\rangle_{\tY_i,\tY_i^*}\geq 0$. All in all, we have $|\langle g_i(z_{\epsilon}),\lambda_i^{\epsilon}\rangle_{\tY_i,\tY_i^*}|\leq \epsilon$. Moreover, we can prove $\langle g_i(\widehat{x}),\lambda_i\rangle_{Y_i,Y_i^*}=\lim_{\epsilon\to0}|\langle g_i(z_{\epsilon}),\lambda_i^{\epsilon}\rangle_{\tY_i,\tY_i^*}|=0$ as in \lemref{interior}, which yields \thmref{fj_condition}-(ii). In particular, \thmref{fj_condition}-(iii) also implies that the multipliers are not all zeros, which completes the proof.
\end{proof}

\begin{remark}
Thanks to \lemref{interior}, it follows from \thmref{fj_condition}-{\rm(ii)} that $\lambda_i=0$ when $g_i(\widehat{x})<_{K_i} 0$.
\end{remark}

{
In addition to the previous FJ necessary condition, we next also provide a sufficient condition to ensure the global optimality, whose proof is given in Appendix \ref{appendix}.
\begin{prop}[FJ sufficient condition]\label{fj_sufficient}Let $f$ be convex on $C$ and $h_j$ admit the affine form 
$h_j(x) = L_j x + e_j$ with $e_j \in \mathtt{H}_j$ and $L_j:\mathtt{X}\to\mathtt{H}_j$ being a bounded linear functional for $j\in J$. Moreover, for $i\in I$, assume that $g_i$ is convex in $x\in C$ in the sense that
\begin{align*}
g_i(\lambda x_1 + (1-\lambda) x_2) \le_{K_i} \lambda g_i(x_1) + (1-\lambda) g_i(x_2),~~ 
\forall \lambda \in [0,1],\ x_1,x_2\in C.
\end{align*}
Then, any feasible point $\widehat{x}\in C$ satisfying the necessary condition in \thmref{fj_condition} with $r_0>0$ is a global minimizer of the constrained optimization problem \eqref{banach_space_optimization}.
\end{prop}}

\subsection{Constraint Qualifications}

The goal of this subsection is to discuss the constraint qualification (CQ) conditions under which our generalized Fritz-John optimality condition can be enhanced to be the Karush-Kuhn-Tucker (KKT) type condition, which plays an important role in studying the classical constrained stochastic control problems in Section~\ref{sec:constrainedSCP-MFG}.

Let us assume that the space $\tX$ admits the decomposition $\tX=\tX_1\times\tX_2$, where both $(\tX_1,\|\cdot\|_{\tX_1})$ and $(\tX_2,\|\cdot\|_{\tX_2})$ are Banach spaces. The norm on $\tX$ is defined to be the product norm $\|(x_1,x_2)\|_{\tX}=\|x_1\|_{\tX_1}+\|x_2\|_{\tX_2}$ for $(x_1,x_2)\in\tX_1\times\tX_2$. We also define the natural projection mapping $\Pi_1:\tX\to\tX_1$ which maps $x=(x_1,x_2)\in\tX=\tX_1\times\tX_2$ to $x_1\in\tX_1$.
Then, we introduce the following definitions:
\begin{definition}\label{CQ}
Let $\widehat{x}=(\widehat{x}_1,\widehat{x}_2)\in C$ be feasible for problem \eqref{banach_space_optimization}, and set $I(\widehat{x}):=\{i\in I;~g_i(\widehat{x})\in\pa K_i\}$. We say that
\begin{itemize}
\item [{\rm(i)}] the linear independence constraint qualification {\rm(LICQ)} holds at $\widehat{x}$ {\rm(}write ${\rm LICQ}(\widehat{x})${\rm)} if the following mapping is surjective:
\begin{align*}
	(D_{x_1}g_i(\widehat{x}_1),D_{x_1}h_j(\widehat{x}_1))_{i\in I(\widehat{x}),j\in J}:\tX_1\mapsto\prod_{i\in I(\widehat{x})}\tY_i\times\prod_{j\in J}\tH_j  ;
\end{align*}

\item [{\rm(ii)}] the Mangasarian-Fromovitz constraint qualification {\rm(MFCQ)} holds at $\widehat{x}$ {\rm(}write ${\rm MFCQ}(\widehat{x})${\rm)} if the mapping $ (D_{x_1}h_j(\widehat{x}_1))_{j\in J}:\tX_1\mapsto\prod_{j\in J}\tH_j$ is surjective, and there exists an element $c\in C$ such that
\begin{align*}
	Dg_i(\widehat{x})(c-\widehat{x})<_{K_i}0,~\forall i\in I(\widehat{x}),\quad Dh_j(\widehat{x})(c-\widehat{x})=0,~\forall j\in J.
\end{align*}
\end{itemize}
\end{definition}

\begin{remark}
If $\tX$ is a Euclidean space, $\tX_1=\tX$ and $\tX_2=\{0\}$, the CQ conditions defined in \defref{CQ} reduce to {\rm LICQ} and {\rm MFCQ} in the Euclidean space, respectively.
\end{remark}

Next, we also provide a sufficient condition for the KKT optimality condition to hold. Let $\Pi_1(C):=\{\Pi_1(x);~x\in C\}$. Then, we have the following corollary whose proof is reported in Appendix~\ref{appendix}:  
\begin{corollary}[KKT condition]\label{CQcon}
Let $\widehat{x}\in C$ solve problem \eqref{banach_space_optimization} locally and $\Pi_1(\widehat{x})\in(\Pi_1(C))^{\rm o}$ in the relative topology of $\tX_1$. If either ${\rm LICQ}(\widehat{x})$ or ${\rm MFCQ}(\widehat{x})$ is fulfilled, the KKT optimality condition holds true in the sense that there exist $\lambda_i\in \tY_i^*$ and $\mu_j\in \tH_j$ for $i\in I$ and $j\in J$ such that
\begin{itemize}
\item[{\rm(i)}] {\rm(dual feasible)} $\lambda_i\in K_i^+$ for all $i\in I$,

\item[{\rm(ii)}] {\rm(complementary slackness)} $\langle g_i(\hat x),\lambda_i\rangle_{\tY_i,\tY_i^*}=0$ for all $i\in I$,

\item[{\rm(iii)}] {\rm(KKT condition)} there exists $\xi\in N_C(\widehat{x})$ such that
\begin{align*}
	-\xi=  Df(\widehat{x})+\sum_{i\in I}\lambda_i\circ Dg_i(\widehat{x})-\sum_{j\in J} \mu_j\circ Dh_j(\widehat{x}).
\end{align*}
\end{itemize}
\end{corollary}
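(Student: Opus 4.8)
The plan is to derive the KKT conclusion from the Fritz--John condition of \thmref{fj_condition} by \emph{upgrading the objective multiplier}: I will show that under either constraint qualification the scalar $r_0$ furnished by \thmref{fj_condition} cannot vanish, and then rescale all multipliers by $r_0$. First I would apply \thmref{fj_condition} to the local solution $\widehat{x}$ to obtain $r_0\geq 0$, $\lambda_i\in K_i^+$, $\mu_j\in\tH_j$, not all zero, together with $\xi\in N_C(\widehat{x})$ satisfying \eqref{FJ}, complementary slackness, and the subsequent remark that $\lambda_i=0$ whenever $i\notin I(\widehat{x})$.

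Writing $\tX^*=\tX_1^*\times\tX_2^*$ and $\xi=(\xi_1,\xi_2)$, the next step exploits the hypothesis $\Pi_1(\widehat{x})\in(\Pi_1(C))^{\rm o}$ to show $\xi_1=0$: since $\widehat{x}_1$ is interior to $\Pi_1(C)$, one can perturb the first coordinate inside $C$ in both directions, so that for every $w_1\in\tX_1$ the points $(\widehat{x}_1\pm t w_1,\widehat{x}_2)$ lie in $C$ for small $t>0$; inserting these into the normal-cone inequality \eqref{eq:NCX}, namely $\langle\widehat{x}-c,\xi\rangle_{\tX,\tX^*}\geq 0$, forces $\langle w_1,\xi_1\rangle_{\tX_1,\tX_1^*}=0$, hence $\xi_1=0$. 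Consequently, pairing \eqref{FJ} against directions of the form $(w_1,0)$ annihilates the normal-cone term and leaves an identity in $\tX_1^*$ involving only the partial derivatives $D_{x_1}g_i(\widehat{x})$ and $D_{x_1}h_j(\widehat{x})$.

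Now I argue by contradiction, assuming $r_0=0$. Under ${\rm LICQ}(\widehat{x})$, the restricted identity becomes $\sum_{i\in I(\widehat{x})}\lambda_i\circ D_{x_1}g_i(\widehat{x})-\sum_{j\in J}\mu_j\circ D_{x_1}h_j(\widehat{x})=0$, i.e. the adjoint of the surjective map in \defref{CQ}-(i) annihilates the tuple $((\lambda_i)_{i\in I(\widehat{x})},(\mu_j)_{j\in J})$; since surjectivity of a bounded linear map implies injectivity of its adjoint, all such $\lambda_i$ and $\mu_j$ vanish, which together with $\lambda_i=0$ for inactive $i$ contradicts the non-triviality in \thmref{fj_condition}-(iii). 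Under ${\rm MFCQ}(\widehat{x})$, I would instead pair \eqref{FJ} (with $r_0=0$) against the admissible direction $c-\widehat{x}$ from \defref{CQ}-(ii): the normal-cone term $-\langle c-\widehat{x},\xi\rangle_{\tX,\tX^*}$ is $\geq 0$, the equality terms vanish because $Dh_j(\widehat{x})(c-\widehat{x})=0$, and \lemref{interior} shows each inequality term $\langle Dg_i(\widehat{x})(c-\widehat{x}),\lambda_i\rangle_{\tY_i,\tY_i^*}$ is $\leq 0$ and strictly negative unless $\lambda_i=0$; balancing signs forces every $\lambda_i=0$. The identity then collapses to $\sum_{j\in J}\mu_j\circ D_{x_1}h_j(\widehat{x})=0$ on $\tX_1$ (using $\xi_1=0$ from the previous step), and surjectivity of $(D_{x_1}h_j(\widehat{x}))_{j\in J}$ again yields $\mu_j=0$, the same contradiction.

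Having established $r_0>0$, I would finish by dividing \eqref{FJ} and all multipliers by $r_0$, using that $K_i^+$ and $N_C(\widehat{x})$ are cones, so that $\lambda_i/r_0\in K_i^+$ and $\xi/r_0\in N_C(\widehat{x})$, and that complementary slackness is scale-invariant; this produces exactly the KKT identity with objective coefficient one. The main obstacle is the handling of the abstract convex constraint $x\in C$: the whole argument hinges on removing the normal-cone multiplier $\xi$ from the $\tX_1$-directions, which is precisely the role of the interiority hypothesis $\Pi_1(\widehat{x})\in(\Pi_1(C))^{\rm o}$ --- without it one cannot pass from \eqref{FJ} to a clean identity on $\tX_1$ to which the surjectivity in \defref{CQ} applies. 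I would also verify that the weak-$^*$ limiting procedure in \thmref{fj_condition} indeed delivers $\xi\in N_C(\widehat{x})$, as this membership is what the $\xi_1=0$ step relies upon.
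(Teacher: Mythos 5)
Your proposal is correct and follows essentially the same route as the paper's proof: invoke \thmref{fj_condition}, use the interiority hypothesis $\Pi_1(\widehat{x})\in(\Pi_1(C))^{\rm o}$ together with test points of the form $(c_1,\widehat{x}_2)$ to annihilate the $\tX_1$-component of the normal-cone element, rule out $r_0=0$ under MFCQ by the sign argument along $c-\widehat{x}$ (via \lemref{interior}) and under LICQ by the surjectivity in \defref{CQ}, and finally rescale all multipliers by $r_0>0$. The only cosmetic difference is in the LICQ case, where you invoke injectivity of the adjoint of the surjective map while the paper proves the same fact by hand (choosing preimages $x_0$ lying in the kernels of all but one multiplier); both arguments also share the same implicit assumption that points $(c_1,\widehat{x}_2)$ with $c_1$ near $\widehat{x}_1$ in $\Pi_1(C)$ remain in $C$.
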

{
We have the following remarks on some distinctions and connections between the FJ condition and KKT condition.
\begin{remark}\label{FJvsKKT}
The Fritz-John (FJ) condition and Karush-Kuhn-Tucker (KKT) condition are closely related but rely on different assumptions. 
The FJ condition holds without any constraint qualification, which makes them applicable in general settings. However, they may lead to degenerate multipliers with $r_0=0$ in \equref{FJ}. By contrast, the KKT condition requires a constraint qualification such as the linear independence constraint qualification (LICQ) or the Mangasarian-Fromovitz constraint qualification (MFCQ). 
Under these assumptions, the multiplier associated with the objective function can be normalized (typically $r_0=1$), yielding a more interpretable optimality system.

For our C-MFC problem \equref{cost_func} (or equivalently \equref{optimization}) to be studied in the next section, it is generally difficult to verify such constraint qualifications, because the constraints are imposed on the state dynamics and the control process. As a result, we mainly apply the FJ condition in \thmref{fj_condition} to analyze the C-MFC problem. 
Nevertheless, as stated in \remref{rem:classicalSMP}, when the inequality constraints in \equref{cost_func} are absent, the LICQ/MFCQ conditions are automatically satisfied, and the FJ condition is enhanced to be the KKT condition for the unconstrained MFC problem.
\end{remark}
}

\section{Extended MFC Problems with Constraints} \label{sec:CSCP}

In this section, we apply the newly obtained FJ optimality condition in \thmref{fj_condition} to establish the SMP for some C-MFC problems in which we can encompass both dynamic expectations constraints and dynamic pathwise state-control and law constraints. To this end, we first formulate our C-MFC problem and introduce some basic spaces and notations. 

Let $(\Omega,\F,\Fb,\Pb)$ be a filtered  probability space with the filtration $\Fb=(\F_t)_{t\in[0,T]}$ satisfying the usual conditions. For $r,l\in\mathbb{N}$, let $(W_t)_{t\in[0,T]}$ be a standard $r$-dimensional $(\Pb,\Fb)$-Brownian motion and denote $U\subset\R^l$ as the closed and convex control space. We  define $\X$ as the space $L^2([0,T];\R^l)/\mathord{\sim}$ equipped with the inner product $\langle\gamma^1,\gamma^2\rangle_{\X}=\int_0^T\langle\gamma_t^1,\gamma_t^2\rangle\d t$. Then,  $(\X,\langle\cdot,\cdot\rangle_{\X})$ is a Hilbert space. For any $\gamma=(\gamma_t)_{t\in[0,T]}\in\X$, we introduce, for fixed $t\in[0,T]$, 
\begin{align}\label{tilde}
\widetilde{\gamma}^t_{s}:=\gamma_s\mathbf{1}_{s\in[0,t]},\quad \forall s\in[0,T].
\end{align}
Then $\|\widetilde{\gamma}^t\|_{\X}=\sqrt{\langle\widetilde{\gamma}^t,\widetilde{\gamma}^t\rangle_{\X}}\leq\|\gamma\|_{\X}$ for all $t\in[0,T]$. Denote by $\U[0,T]$ the set of admissible controls $\alpha=(\alpha_t)_{t\in[0,T]}$ valued in $U$ such that the process $\alpha$ is $\Fb$-adapted and satisfies $\E[\int_0^T|\alpha_t|^2\d t]<\infty$. Let $\Lb_{\Fb}^2$ be the set of $\Fb$-adapted processes $\beta=(\beta_t)_{t\in[0,T]}$ taking values in $\R^l$ such that $\E[\int_0^T|\beta_t|^2\d t]<\infty$.  Then, $\mathbb{L}_{\Fb}^2$ is a Hilbert space equipped with the inner product that
\begin{align}\label{eq:innerproLF2}
\langle \beta^1,\beta^2\rangle_{\Lb_{\Fb}^2}=\E\left[\int_0^T\langle\beta^1_t,\beta^2_t\rangle\d t\right],\quad \forall  \beta^1,\beta^2\in\mathbb{L}_{\Fb}^2, 
\end{align}
where $\beta^1=\beta^2$ means that $\beta^1(\omega)=\beta^2(\omega)$ holds in $\X$ except for a $\Pb$-null set. It can be easily verified that $\U[0,T]$ is closed and convex in $(\Lb_{\Fb}^2,\langle\cdot,\cdot\rangle_{\Lb_{\Fb}^2})$.
We can observe that $\beta\in L^2(\Omega;\X)$ for all $\beta\in\Lb_{\Fb}^2$, and moreover, we have $\|\beta\|_{\Lb_{\Fb}^2}^2:=\E\left[\|\beta\|_{\X}^2\right]=\E\left[\langle\beta,\beta\rangle_{\X}\right]$.
Let us define $\mathbb{M}^2$ ($\subset\Lb_{\Fb}^2$) as the set of $\Fb$-adapted (continuous) $\R^n$-valued process $\eta=(\eta_t)_{t\in[0,T]}$ satisfying the It\^o's representation that 
\begin{align}\label{eq:ito-rep}
\eta_t=\eta_0+\int_0^t\eta^1_s\d s+\int_0^t\eta_s^2\d W_s, 
\end{align}
where $\eta_0\in L^2(\Omega;\R^n)$ is $\F_0$-measurable,  $\eta^1=(\eta_t^1)_{t\in[0,T]}$ and $\eta^2=(\eta_t^2)_{t\in[0,T]}$ take values respectively in $\R^n$ and $\R^{n\times r}$, which are $\Fb$-adapted processes such that $\E[\int_0^T(|\eta_t^1|^2+|\eta_t^2|^2)\d t]<\infty$. Then, $\mathbb{M}^2$ becomes a Hilbert space under the inner product $\langle\cdot,\cdot\rangle_{\mathbb{M}^2}$ defined by
\begin{align}\label{eq:innerM2}
\langle \eta,\zeta\rangle_{\mathbb{M}^2}:=\E\left[\langle\eta_0,\zeta_0\rangle+\int_0^T\left(\langle\eta_t^1,\zeta_t^1\rangle+\tr\left(\eta_t^2(\zeta_t^2)^{\T}\right)\right)\d t \right],\quad\forall\eta,\zeta\in\mathbb{M}^2.    
\end{align}	
We use $\Lb_{\Fb,\R}^2$ to represent the set of adapted processes in $L^2([0,T]\times\Omega;\R)/\mathord{\sim}$. Then, $\Lb^2_{\Fb,\R}$ becomes a Hilbert space when it is equipped with the inner product $\langle\cdot,\cdot\rangle_{\Lb^2_{\Fb,\R}}$ defined by
\begin{align}\label{eq:L2}
\langle\beta^1,\beta^2\rangle_{\Lb^2_{\Fb,\R}}:=\E\left[\int_0^T\beta_t^1\beta_t^2\d t\right],\quad \forall \beta^1,\beta^2\in\Lb^2_{\Fb,\R}.
\end{align}
We also denote $\|\cdot\|_{\Lb_{\Fb,\R}^2}$ as the  induced norm by above inner product.

\subsection{Partial L-derivative}

As a technical preparation, we follow Acciaio et al. \cite{Carmona1} to introduce the partial L-derivative of joint probability laws (i.e. probability measures on product spaces). 

Let $w:\Pc_2(\R^n\times\R^l)\mapsto\R$. We use $\rho$ to represent a generic element in $\Pc_2(\R^n\times\R^l)$, while $\mu\in\Pc_2(\R^n)$ and $\nu\in\Pc_2(\R^l)$ denote its respective marginals. For a probability space $(\tilde{\Omega},\tilde{\F},\tilde{\Pb})$, define the lift $\tilde w:L^2((\tilde{\Omega},\tilde{\F},\tilde{\Pb});\R^n\times\R^l)\to \R$ of the element $w$ by $\tilde w(\tilde X,\tilde\alpha):=w(\Law(\tilde X,\tilde\alpha))$, 
where $\Law(\tilde{X},\tilde{\alpha})$ denotes the joint distribution of $(\tilde{X},\tilde{\alpha})$ under $\tilde P$. Then, the L-derivative is defined by
\begin{definition}\label{def:L-derivative}
We call $w$ the {\rm L}-differentiable if there is a pair $(\tilde X,\tilde\alpha)\in L^2((\tilde{\Omega},\tilde{\F},\tilde{\Pb});\R^n\times\R^l)$ with the joint distribution $\Law^{\tilde{P}}(\tilde X,\tilde\alpha)$ such that the lifted function $\tilde w$ is Fr\'{e}chet differentiable at $(\tilde X,\tilde\alpha)$.
\end{definition}

By Section 5.2.1 in Carmona and Delarue \cite{Carmona2}, there exists a measurable mapping $\pa_{\rho}w(\rho)(\cdot):\R^n\times\R^l\mapsto\R^n\times\R^l$ such that $\pa_{\rho}w(\rho)(\tilde X,\tilde\alpha)=D\tilde w(\tilde X,\tilde\alpha)$. Since $L^2((\tilde{\Omega},\tilde{\F},\tilde{\Pb});\R^n\times\R^l)\simeq L^2((\tilde{\Omega},\tilde{\F},\tilde{\Pb});\R^n)\times L^2((\tilde{\Omega},\tilde{\F},\tilde{\Pb});\R^l)$, we can define the first component and second component of $\pa_{\rho}w(\rho)(\tilde X,\tilde\alpha)$ as $\pa_{\mu}w(\rho)(\tilde X,\tilde\alpha)$ and $\pa_{\nu}w(\rho)(\tilde X,\tilde\alpha)$, respectively. By the measurability of $\pa_{\rho}w(\rho)(\tilde X,\tilde\alpha)$, we can also conclude that $\pa_{\mu}w(\rho)(\cdot):\R^n\times\R^l\mapsto \R^n$ and $\pa_{\nu}w(\rho)(\cdot):\R^n\times\R^l\mapsto \R^l$ are both measurable. We call $\pa_{\mu}w(\rho)$ and $\pa_{\nu}w(\rho)$ the partial derivatives of $w$. By Definition~\ref{def:L-derivative} on the L-derivative, we have the following lemma.
\begin{lemma}\label{Lpartialdif}
Let $\rho=\Law(\tilde X,\tilde \alpha)$ for a  $\R^n\times\R^l$-valued r.v. $(\tilde X,\tilde\alpha)$ on some probability space $(\tilde\Omega,\tilde\F,\tilde P)$. Then, for any $\R^n\times\R^l$-valued r.v. $(\tilde X',\tilde \alpha')$, it holds that
\begin{align*}
w(\Law(\tilde X',\tilde\alpha'))&=w(\rho)+\E^{\tilde P}\left[\langle \pa_{\mu}w(\rho)(\tilde X,\tilde\alpha),\tilde X'-\tilde X\rangle\right]+\E^{\tilde P}\left[\langle \pa_{\nu}w(\rho)(\tilde X,\tilde\alpha),\tilde \alpha'-\tilde \alpha\rangle\right]\\
&\quad+o\left(\sqrt{\E^{\tilde P}[|(\tilde X',\tilde\alpha')-(\tilde X,\tilde\alpha)|^2]}\right), 
\end{align*}
when $(\tilde X',\tilde \alpha')$ tends to $(\tilde X,\tilde\alpha)$ in $L^2((\tilde\Omega,\tilde\F,\tilde P);\R^n\times\R^l)$. Here, $\E^{\tilde{P}}$ denotes the expectation operator under $\tilde{P}$, and the above $\langle\cdot,\cdot\rangle$ is the inner product of $L^2((\tilde\Omega,\tilde\F,\tilde P);\R^n\times\R^l)$.
\end{lemma}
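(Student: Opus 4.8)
The plan is to read the expansion off directly from the definition of $L$-differentiability together with the Riesz representation of the Fr\'echet derivative of the lift. By \defref{def:L-derivative}, saying that $w$ is $L$-differentiable at $\rho=\Law(\tilde X,\tilde\alpha)$ means precisely that the lifted map $\tilde w$ in \eqref{eq:liftw} is Fr\'echet differentiable at the point $(\tilde X,\tilde\alpha)\in L^2((\tilde\Omega,\tilde\F,\tilde P);\R^n\times\R^l)$. Recalling that $\tilde w(\tilde X',\tilde\alpha')=w(\Law(\tilde X',\tilde\alpha'))$ and $\tilde w(\tilde X,\tilde\alpha)=w(\rho)$, Fr\'echet differentiability yields, for any $(\tilde X',\tilde\alpha')$ on the same probability space,
\begin{align*}
w(\Law(\tilde X',\tilde\alpha'))=w(\rho)+D\tilde w(\tilde X,\tilde\alpha)\big[(\tilde X'-\tilde X,\tilde\alpha'-\tilde\alpha)\big]+o\Big(\big\|(\tilde X',\tilde\alpha')-(\tilde X,\tilde\alpha)\big\|_{L^2}\Big)
\end{align*}
as $(\tilde X',\tilde\alpha')\to(\tilde X,\tilde\alpha)$ in $L^2$.

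Next I would identify the linear term. Since $L^2((\tilde\Omega,\tilde\F,\tilde P);\R^n\times\R^l)$ is a Hilbert space, the Riesz representation theorem realizes the bounded linear functional $D\tilde w(\tilde X,\tilde\alpha)$ as the inner product against a unique element of the same space; by the construction recalled from Section 5.2.1 of Carmona and Delarue \cite{Carmona2}, this representative is exactly $\pa_\rho w(\rho)(\tilde X,\tilde\alpha)$, a function of the law $\rho$ alone evaluated at $(\tilde X,\tilde\alpha)$. Hence
\begin{align*}
D\tilde w(\tilde X,\tilde\alpha)\big[(\tilde X'-\tilde X,\tilde\alpha'-\tilde\alpha)\big]=\E^{\tilde P}\Big[\big\langle \pa_\rho w(\rho)(\tilde X,\tilde\alpha),\,(\tilde X'-\tilde X,\tilde\alpha'-\tilde\alpha)\big\rangle\Big].
\end{align*}
Invoking the canonical isometry $L^2((\tilde\Omega,\tilde\F,\tilde P);\R^n\times\R^l)\simeq L^2((\tilde\Omega,\tilde\F,\tilde P);\R^n)\times L^2((\tilde\Omega,\tilde\F,\tilde P);\R^l)$ and the induced decomposition $\pa_\rho w(\rho)=(\pa_\mu w(\rho),\pa_\nu w(\rho))$, the inner product splits additively, producing exactly the $\pa_\mu$-term and the $\pa_\nu$-term displayed in the statement.

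It remains only to match the error term: the norm on $L^2((\tilde\Omega,\tilde\F,\tilde P);\R^n\times\R^l)$ is $\big\|(\tilde X',\tilde\alpha')-(\tilde X,\tilde\alpha)\big\|_{L^2}=\sqrt{\E^{\tilde P}[|(\tilde X',\tilde\alpha')-(\tilde X,\tilde\alpha)|^2]}$, so the Fr\'echet remainder is literally the claimed $o$-term. Since the whole argument is a transcription of the definition of $L$-differentiability, there is no substantive analytic difficulty; the only point requiring care is the identification of the Riesz representative of $D\tilde w(\tilde X,\tilde\alpha)$ with the measurable field $\pa_\rho w(\rho)(\cdot)$ evaluated at the random variable $(\tilde X,\tilde\alpha)$, together with its dependence on $\rho$ alone rather than on the chosen lift, both of which are guaranteed by the cited construction in \cite{Carmona2}. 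I would also stress, as the statement does, that the expansion is asserted for perturbations $(\tilde X',\tilde\alpha')$ living on the same probability space $(\tilde\Omega,\tilde\F,\tilde P)$, which is what makes the direct appeal to Fr\'echet differentiability legitimate.
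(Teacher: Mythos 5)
Your proposal is correct and follows exactly the route the paper intends: the paper offers no separate proof of this lemma, treating it as an immediate unfolding of \defref{def:L-derivative} together with the structure result from Section 5.2.1 of Carmona and Delarue \cite{Carmona2} (recalled just before the lemma), which identifies $D\tilde w(\tilde X,\tilde\alpha)$ with the measurable field $\pa_{\rho}w(\rho)(\cdot)$ evaluated at $(\tilde X,\tilde\alpha)$ and splits it into the $\pa_{\mu}$ and $\pa_{\nu}$ components. Your transcription of the Fr\'echet expansion, the Riesz identification, and the matching of the $o(\cdot)$ remainder with the $L^2$-norm is precisely that argument, spelled out.
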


In the sequel, we will construct a copy space $(\Omega',\F',\Pb')$ of the probability space $(\Omega,\F,\Pb)$ in the sense that $(\Omega',\F',\Pb')=(\Omega,\F,\Pb)$. For a random variable $X$ constructed on $(\Omega,\F,\Pb)$, we denote by $X'$ its copy random variable on $(\Omega',\F',\Pb')$ such that $X'(\omega')=X(\omega)$ for $\omega'=\omega\in\Omega$.

\subsection{Formulation of C-MFC problems}

In this section, we formulate our C-MFC problems. Let $(b,\sigma):[0,T]\times\R^n\times{\cal P}_{2}(\R^n\times\R^l)\times\R^l\mapsto\R^n\times\R^{n\times r}$ be measurable mappings. For any $\alpha=(\alpha_t)_{t\in[0,T]}\in\U[0,T]$, consider the following controlled Mckean-Vlasov SDE that, $X_0^{\alpha}=\kappa\in L^2((\Omega,\F_0,\Pb);\R^n)$, and for $t\in(0,T]$,
\begin{align}\label{eq:controlledSDE}
\d X_t^{\alpha} &=b(t,X_t^{\alpha},\Law(X_t^{\alpha},\alpha_t),\alpha_t)\d t +\sigma(t,X_t^{\alpha},\Law(X_t^{\alpha},\alpha_t),\alpha_t)\d W_t.
\end{align}	
Let integer $1\leq m_1\leq m$. For $\alpha\in\U[0,T]$ and $\phi^i:[0,T]\times\R^n\times\Pc_2(\R^n)\times\X\mapsto\R$ with $i\in I_1:=\{1,2,\ldots,m_1\}$, we introduce the dynamic expectation constraints in the sense that, for $i\in I_1$, 
\begin{align}\label{expected_constraint}
\E\left[\phi^i\left(t,X_t^{\alpha},\Law(X_t^{\alpha}),\widetilde{\alpha}^t\right)\right]\geq 0,\quad \forall t\in[0,T].
\end{align}

For $\psi^j:[0,T]\times \R^n\times\Pc_2(\R^n\times\R^l)\times\R^l\mapsto\R$ with $j\in I_2:=I\setminus I_1$, the dynamic pathwise state-control and joint law constraints are formulated by, for $j\in I_2$,
\begin{align}\label{path_constraint}
\psi^j\left(t,X_t^{\alpha},\Law(X_t^{\alpha},\alpha_t),\alpha_t\right)\geq 0,\quad \d t\times\d\Pb\text{-}\as.
\end{align}
Here, $X^{\alpha}=(X_t^{\alpha})_{t\in[0,T]}$ obeys the controlled Mckean-Vlasov SDE \eqref{eq:controlledSDE} and the process $\widetilde{\alpha}^t$ is defined by \eqref{tilde}. Then, we introduce the admissible control set $\U_{ad}[0,T]$ as the set of processes $\alpha\in\U[0,T]$ such that there exists $X^{\alpha}=(X_t^{\alpha})_{t\in[0,T]}\in\mathbb{M}^2$ satisfying \equref{eq:controlledSDE}, \equref{expected_constraint} and \equref{path_constraint}.
Based on the preparations provided above, we introduce the cost functional as follows, for $\alpha=(\alpha_t)_{t\in [0,T]}\in\U_{ad}[0,T]\neq\varnothing$, 
\begin{align}\label{cost_func0}
J(\alpha):=\E\left[\int_0^Tf(t,X_t^{\alpha},\Law(X_t^{\alpha},\alpha_t),\alpha_t)\d t+g(X_T^{\alpha},\Law(X_T^{\alpha}))\right].
\end{align}
\begin{remark}
The expectation constraint \eqref{expected_constraint} involving the law $\Law(X_t^{\alpha})$ is motivated by the variance-type constraint $\E[|X_t^{\alpha}+\int_0^t\alpha_s\d s-\E [X_t^{\alpha}]|^2]\leq a_t$, where $t\mapsto a_t$ is a deterministic continuous mapping. This type of constraints cannot be encompassed by the pathwise constraint in \eqref{path_constraint}. Moreover, when $\phi^i$
is independent of the measure argument $\mu$, the constraint \eqref{expected_constraint} extends the expectation constraint considered by Hu et al. \cite{Hu} in which the control does not enter into the constraint.
\end{remark}

The following assumptions on model coefficients are imposed throughout the section:

\begin{ass}\label{ass1} 
\begin{itemize}
\item[{\rm(A1)}] for $i\in I_1$ and $j\in I_2$, the mappings $b,\sigma,f,g,\phi^i$ and $\psi^j$ are Borel measurable and continuously differentiable w.r.t. $x\in\R^n$. Moreover, $b,\sigma,f,\psi^j$ are continuously differentiable w.r.t. $u\in\R^l$ and $\phi^i$ is continuously Fr\^echet differentiable w.r.t. $\gamma\in\X$. Moreover, $\phi^i$ is jointly continuous in $(t,x,\mu)\in[0,T]\times\R^n\times\Pc_2(\R^n)$.

\item[{\rm(A2)}] there is a constant $M>0$ such that, for all $(t,x,\mu,\rho,u,\gamma)\in[0,T]\times\R^n\times\Pc_2(\R^n)\times\Pc_2(\R^n\times\R^l)\times\R^l\times\X$,
\begin{align*}
	\sum_{i\in I_1}\left(|\nabla_x\phi^i(t,x,\mu,\gamma)|+\|D_{\gamma}\phi^i(t,x,\mu,\gamma)\|_{\X}\right)&\leq M\left(1+|x|+M_2(\mu)+\|\gamma\|_{\X}\right),\\
	\sum_{j\in I_2}\left(|\nabla_x\psi^j(t,x,\rho,u)|+|\nabla_u\psi^j(t,x,\rho,u)|\right)&\leq M.
\end{align*}
Here $M_2(\mu):=\int_{\R^n}|x|^2\mu(\d x)<\infty$.
\item[{\rm(A3)}] the mappings $b(t,x,\rho,u)$ and $\sigma(t,x,\rho,u)$ are uniformly Lipschitz continuous in $(x,\rho)\in\R^n\times\Pc_2(\R^n\times \R^l)$ in the sense that, there is a constant $M>0$ independent of $u\in \R^l$ such that, for all $(x,\rho),(x',\rho')\in\R^n\times\Pc_2(\R^n\times \R^l)$ and $(t,u)\in[0,T]\times \R^l$,
\begin{align*}
	\left|b(t,x',\rho',u)-b(t,x,\rho,u)\right|+\left|\sigma(t,x',\rho',u)-\sigma(t,x,\rho,u)\right|\leq M\left(|x-x'|+\Wc_2(\rho,\rho')\right),
\end{align*}
where $\Wc_2(\cdot,\cdot)$ denotes the Wasserstein metric on ${\cal P}_2(\R^n\times \R^l)$.

\item[\rm(A4)] there is a constant $M>0$ such that, for all $(t,x,\rho,u)\in[0,T]\times\R^n\times\Pc_2(\R^n\times \R^l)\times \R^l$, it holds that 
\begin{align*}
	|\delta(t,x,\rho,u)|\leq M(1+|x|+|u|+M_2(\rho)),
\end{align*}
where $\delta\in\{b,\sigma,\nabla_ub(\cdot),\nabla_u\sigma(\cdot),\nabla_xf(\cdot),\nabla_uf(\cdot)\}$ and $M_2(\rho):=\int_{\R^n\times\R^l}(|x|^2+|u|^2)\rho(\d x,\d u)<\infty$. Moreover, for all $(x,\mu)\in\R^n\times\Pc_2(\R^n)$,
\begin{align*}
	|\nabla_xg(x,\mu)|\leq M(1+|x|+M_2(\mu)).
\end{align*}
\item[\rm(A5)] for $i\in I$ and $j\in J$, the mappings $b(t,x,\rho,u),\sigma(t,x,\rho,u),f(t,x,\rho,u)$, $g(x,\mu)$, $\phi^i$ and $\psi^j$ are continuously {\rm L}-differentiable w.r.t. $\rho\in\Pc_2(\R^n\times \R^l)$ or $\mu\in\Pc_2(\R^n)$. Moreover, there is a constant $M>0$ such that, for all $(t,x,\rho,\mu,u,\gamma)\in[0,T]\times\R^n\times\Pc_2(\R^n\times \R^l)\times\Pc_2(\R^n)\times \R^l\times\X$,
\begin{align*}
	& \left(\int_{\R^n\times\R^l}|\pa_{\rho}f(t,x,\rho,u)(x',u')|^2\rho(\d x',\d u')\right)^{\frac12}+\left(\int_{\R^n}|\pa_{\mu}g(x,\mu)(x')|^2\mu(\d x')\right)^{\frac12}\\
	&~+\sum_{i\in I_1}\left(\int_{\R^n}|\pa_{\mu}\phi^i(t,x,\mu,\gamma)(x')|^2\mu(\d x) \right)^{\frac12} \leq M(1+|x|+|u|+M_2(\rho)+M_2(\mu)+\|\gamma\|_{\X}),
\end{align*} 
and moreover
\begin{align*}
	\sum_{j\in I_2}\left(\int_{\R^n\times\R^l}|\pa_{\rho}\psi^j(t,x,\rho,u)(x',u')|^2\rho(\d x',\d u')\right)^{\frac12}\leq M.
\end{align*}
\end{itemize}
\end{ass}

We first have the following remark:
\begin{remark}\label{rem:boundedLdif}
Under \assref{ass1}, for any $\alpha=(\alpha_t)_{t\in[0,T]}\in\U_{ad}[0,T]$, the Mckean-Vlasov SDE \eqref{eq:controlledSDE} will always admit a unique solution $X^{\alpha}=(X_t^{\alpha})_{t\in[0,T]}$ satisfying $\E[\sup_{t\in[0,T]}|X_t^{\alpha}|^2]<\infty$. Thus,  $X\in\M^2$ and the cost functional $J(\alpha)$ defined in \eqref{cost_func} are well-posed. Moreover, it follows from  \assref{ass1}-{\rm(A2)} that  $\E[\phi^i(t,X_t,\Law(X_t),\widetilde{\alpha}^t)]<\infty$ for all $t\in[0,T]$ since $\phi^i$ has at most quadratic growth in $(x,\gamma)\in\R^n\times\X$. On the other hand, by using \assref{ass1}-${\rm (A3)}$, since $b(t,x,\rho,u)$ and $\sigma(t,x,\rho,u)$ are continuously {\rm L}-differentiable w.r.t. $\rho\in{\cal P}_2(\R^n\times\R^l)$, it holds that, for all $(t,x,\rho,u)\in[0,T]\times\R^n\times\Pc_2(\R^n\times\R^l)\times\R^l$,
{\small\begin{align*}
	& \left(\int_{\R^n\times\R^l}|\pa_{\rho}b(t,x,\rho,u)(x',u')|^2\rho(\d x',\d u')\right)^{\frac12}+\left(\int_{\R^n\times\R^l}|\pa_{\rho}\sigma(t,x,\rho,u)(x',u')|^2\rho(\d x',\d u')\right)^{\frac12}\leq M,   
	\end{align*}}
	for the positive constant $M$ given in \assref{ass1}-{\rm(A3)}. 
\end{remark}

\begin{remark}\label{compare}
Comparing with Acciaio et al.~\cite{Carmona1} or  Hu et al. \cite{Hu}, we do not impose the uniform boundedness on gradients $\nabla_u(b,\sigma)$ of coefficients $(b,\sigma)$ of the state dynamics, which is relaxed to be  the linear growth condition in \assref{ass1}-{\rm(A4)}. 
\end{remark}

We then consider the following C-MFC problem given by
\begin{align}\label{cost_func}
\begin{cases}
\text{minimize $J(\alpha)$ over $\alpha\in\U[0,T]$;}\\[0.6em]
\displaystyle \text{subject to $X_t^{\alpha}=\kappa+\int_0^tb(s,X_s^{\alpha},\Law(X_s^{\alpha},\alpha_s),\alpha_s)\d s$}\\[0.8em]
\displaystyle ~~~~~~~~~~~~~~~~~~~~~~~\text{$+\int_0^t\sigma(s,X_s^{\alpha},\Law(X_s^{\alpha},\alpha_s),\alpha_s)\d W_s$,~~$\forall t\in[0,T]$,}\\[0.8em]
\displaystyle ~~~~~~~~~~~~~~~\text{$\E\left[\phi^i\left(t,X_t^{\alpha},\Law(X_t^{\alpha}),\widetilde{\alpha}^t\right)\right]\geq 0$,~~$i\in I_1$,~~$\forall t\in[0,T]$,}\\[0.8em]
\displaystyle ~~~~~~~~~~~~~~~\text{$\psi^j\left(t,X_t^{\alpha},\Law(X_t^{\alpha},\alpha_t),\alpha_t\right)\geq 0$,~~$j\in I_2$,~~$\d t\times\d\Pb$-a.s.}
\end{cases}
\end{align}
Obviously, it is equivalent to minimizing the objective functional $J(\alpha)$ in \equref{cost_func} over $\U_{ad}[0,T]$. We first have the following preliminary results	on constraints whose proof is reported in Appendix~\ref{appendix}.
\begin{lemma}\label{continuous_ell}
Let $i\in I_1$ and $(X,\alpha)\in\M^2\times\Lb_{\Fb}^2$.  Define $g_i(t):=\E[\phi^i(t,X_t,\Law(X_t),\widetilde{\alpha}^t)]$ with $t\in[0,T]$. Then it holds that $g_i\in C([0,T];\R)$.
\end{lemma}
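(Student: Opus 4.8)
The plan is to prove sequential continuity of $g_i$ at an arbitrary point $t\in[0,T]$ via the dominated convergence theorem. Fix a sequence $t_k\to t$ in $[0,T]$. Since I want to pass the limit inside the expectation in $g_i(t_k)=\E[\phi^i(t_k,X_{t_k},\widetilde\alpha^{t_k})]$, the two ingredients to establish are (a) an integrable random variable dominating $|\phi^i(t_k,X_{t_k},\widetilde\alpha^{t_k})|$ uniformly in $k$, and (b) the almost sure convergence $\phi^i(t_k,X_{t_k},\widetilde\alpha^{t_k})\to\phi^i(t,X_t,\widetilde\alpha^t)$.

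For the domination, I would first upgrade the gradient bounds in \assref{ass1}-(A2) to a quadratic growth bound on $\phi^i$ itself. Since $\phi^i$ does not depend on the control variable, setting $u=0$ in (A2) gives $|\nabla_x\phi^i(t,x,\gamma)|+\|D_\gamma\phi^i(t,x,\gamma)\|_{\X}\le M(1+|x|+\|\gamma\|_{\X})$; then writing $\phi^i(t,x,\gamma)-\phi^i(t,0,0)=\int_0^1[\langle\nabla_x\phi^i(t,\theta x,\theta\gamma),x\rangle+\langle D_\gamma\phi^i(t,\theta x,\theta\gamma),\gamma\rangle_{\X}]\,\d\theta$ and using the joint continuity of $(t,x)\mapsto\phi^i(t,x,0)$ to bound $\sup_{t\in[0,T]}|\phi^i(t,0,0)|<\infty$, I obtain $|\phi^i(t,x,\gamma)|\le C(1+|x|^2+\|\gamma\|_{\X}^2)$ with $C$ independent of $t$. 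Consequently $|\phi^i(t_k,X_{t_k},\widetilde\alpha^{t_k})|\le C(1+\sup_{s\in[0,T]}|X_s|^2+\|\alpha\|_{\X}^2)$, where I used $\|\widetilde\alpha^{t_k}\|_{\X}\le\|\alpha\|_{\X}$. The right-hand side is integrable: $\E[\|\alpha\|_{\X}^2]=\E[\int_0^T|\alpha_s|^2\d s]<\infty$ since $\alpha\in\Lb_{\Fb}^2$, while the It\^o representation \eqref{eq:ito-rep} of $X\in\M^2$ together with Cauchy--Schwarz and the Burkholder--Davis--Gundy inequality yields $\E[\sup_{s\in[0,T]}|X_s|^2]<\infty$.

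For the pointwise convergence I would verify that each argument converges almost surely and then invoke the continuity of $\phi^i$. Since $X\in\M^2$ has continuous paths, $X_{t_k}\to X_t$ a.s.; and for a.e. $\omega$ the map $t\mapsto\int_0^t|\alpha_s(\omega)|^2\d s$ is absolutely continuous, so $\|\widetilde\alpha^{t_k}-\widetilde\alpha^t\|_{\X}^2=\int_{t\wedge t_k}^{t\vee t_k}|\alpha_s|^2\d s\to0$, i.e. $\widetilde\alpha^{t_k}\to\widetilde\alpha^t$ in $\X$. The step I expect to be the main (though mild) obstacle is to deduce joint convergence of $\phi^i$ from the \emph{separate} regularity in \assref{ass1}-(A1), namely joint continuity in $(t,x)$ and continuous Fr\'echet differentiability in $\gamma$. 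I would handle this through the splitting $\phi^i(t_k,X_{t_k},\widetilde\alpha^{t_k})-\phi^i(t,X_t,\widetilde\alpha^t)=[\phi^i(t_k,X_{t_k},\widetilde\alpha^{t_k})-\phi^i(t_k,X_{t_k},\widetilde\alpha^t)]+[\phi^i(t_k,X_{t_k},\widetilde\alpha^t)-\phi^i(t,X_t,\widetilde\alpha^t)]$: the second bracket vanishes by joint $(t,x)$-continuity, while the first is bounded via the mean value inequality by $\sup_{\theta\in[0,1]}\|D_\gamma\phi^i(t_k,X_{t_k},\widetilde\alpha^t+\theta(\widetilde\alpha^{t_k}-\widetilde\alpha^t))\|_{\X}\,\|\widetilde\alpha^{t_k}-\widetilde\alpha^t\|_{\X}$, in which the supremum stays bounded along $k$ (by the growth bound of (A2) and the a.s. boundedness of $|X_{t_k}|$ and $\|\widetilde\alpha^{t_k}\|_{\X}$) and $\|\widetilde\alpha^{t_k}-\widetilde\alpha^t\|_{\X}\to0$. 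Combining the domination of the second paragraph with this almost sure convergence, the dominated convergence theorem yields $g_i(t_k)\to g_i(t)$, establishing $g_i\in C([0,T];\R)$.
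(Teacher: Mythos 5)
Your proof is correct and follows essentially the same route as the paper: both split the increment of $\phi^i$ into a $(t,x)$-part, handled by the joint continuity in \assref{ass1}-(A1), and a $\gamma$-part, handled by the growth bound on $D_\gamma\phi^i$ in \assref{ass1}-(A2) together with the key identity $\|\widetilde{\alpha}^{t_2}-\widetilde{\alpha}^{t_1}\|_{\X}^2=\int_{t_1\wedge t_2}^{t_1\vee t_2}|\alpha_s|^2\,\d s\to0$. The only difference is in execution: the paper estimates the two terms directly in expectation (DCT for the $(t,x)$-term, a Taylor expansion plus Cauchy--Schwarz in $L^2(\Omega)$ for the $\gamma$-term), whereas you prove a.s.\ convergence pathwise (mean value inequality for the $\gamma$-term) and apply DCT once, explicitly constructing the quadratic-growth dominating variable via $\E[\sup_{s\in[0,T]}|X_s|^2]<\infty$ --- a domination step the paper's own DCT application leaves implicit.
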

Now, let us introduce that
\begin{align}\label{eq:XYHexam}
\tX=\M^2\times\Lb_{\Fb}^2,~ \tY_i=C([0,T];\R),~\forall i\in I_1,~\tY_{j}=\Lb^2_{\Fb,\R},~\forall j\in I_2,~\tH=\M^2,
\end{align}
where, for $x_1=(X_1,\alpha_1),x_2=(X_2,\alpha_2)\in \tX$, we define the inner product by $\langle x_1,x_2\rangle_{\tX}:=\langle X_1,X_2\rangle_{\M^2}+\langle\alpha_1,\alpha_2\rangle_{\Lb_{\Fb}^2}$.
Let $\lv\cdot\rv_{\tX}:= \sqrt{\langle \cdot,\cdot\rangle_{\tX}}$ be the induced norm. We also take $\|\cdot\|_{\tY_i}$~$(i\in I_1)$, $\|\cdot\|_{\tY_{j}}$ $(j\in I_2)$ and $\langle\cdot,\cdot\rangle_{\tH}$ as the supremum norm $\|\cdot\|_{\infty}$ on $C([0,T];\R^n)$, the norm $\|\cdot\|_{\Lb^2_{\Fb,\R}}$ (c.f. \eqref{eq:L2})  and the inner product $\langle\cdot,\cdot\rangle_{\M^2}$ (c.f. \equref{eq:innerM2}), respectively. 
Furthermore, we define $J:\tX\to\R$, $\Phi_i:\tX\to\tY_i$ ~$(i\in I_1)$, $\Psi_j:\tX\to\tY_{j}$~$(j\in I_2)$ and $F:\tX\to\tH$ as follows, for $(X,\alpha)\in\tX$,
\begin{align}\label{func_def}
\begin{cases}
\displaystyle J(X,\alpha):=\E\left[\int_0^Tf(t,X_t,\Law(X_t,\alpha_t),\alpha_t)dt+g(X_T,\Law(X_T))\right],\\[1em] 
\displaystyle \Phi_i(X,\alpha)(t):=-\E[\phi^i(t,X_{t},\Law(X_t^{\alpha}),\widetilde{\alpha}^t)],\quad \forall t\in[0,T],~i\in I_1,\\[1em] 
\displaystyle \Psi_j(X,\alpha)(t):=-\psi^j(t,X_t,\Law(X_t^{\alpha},\alpha_t),\alpha_{t}),\quad \forall t\in[0,T],~j\in I_2,\\[1em]
\displaystyle F(X,\alpha)(\cdot):=X_{\cdot}-\kappa-\int_0^{\cdot}b(s,X_s,\Law(X_s,\alpha_s),\alpha_s)\d s\!-\!\int_0^{\cdot}\sigma(s,X_s,\Law(X_s,\alpha_s),\alpha_s)\d W_s.
\end{cases}
\end{align}
The mappings $\Phi_i$ {\rm ($i\in I_1$)} and $\Psi_j~(j\in I_2)$ in \eqref{func_def} are well-defined thanks to \assref{ass1}-{\rm (A2)} and \lemref{continuous_ell}. Thus, we can formulate the optimization problem with constraints that
\begin{align}\label{optimization}
\begin{cases}
\displaystyle       \inf~J(X,\alpha)\\[0.4em]
\displaystyle       \st~~(X,\alpha)\in C:=\M^2\times\U[0,T],\\[0.4em]
\displaystyle      ~~~~~~  \Phi_i(X,\alpha)\leq_{K_1}0,~~\forall i\in I_1,\\[0.4em]
\displaystyle  ~~~~~~\Psi_j(X,\alpha)\leq_{K_2}0,~~\forall j\in I_2,\\[0.4em]
\displaystyle  ~~~~~~F(X,\alpha)=0,
\end{cases}
\end{align}
Here, $C=\M^2\times\U[0,T]$ is a closed and convex set in $\tX$, and 
\begin{align}\label{eq:K12}
\begin{cases}
\displaystyle  K_1:=\{f\in C([0,T];\R);~f(t)\geq 0,~\forall t\in[0,T]\},\\[0.6em]
\displaystyle K_2:=\{f\in\Lb^2_{\Fb,\R};~f(t,\omega)\geq 0,~\forall (t,\omega)\in[0,T]\times\Omega\}=:\Lb_{\Fb,\R+}^2    
\end{cases}
\end{align}
are closed and convex {\it cones} in $C([0,T];\R)$ and $\Lb^2_{\Fb,\R}$, respectively. Moreover, one can verify that $K_1^{\rm o}=\{f\in C([0,T];\R);~f(t)> 0,~\forall t\in[0,T]\}$ and $K_2^{\rm o}=\varnothing$. On the other hand, the well-posedness of the controlled Mckean-Vlasov SDE \equref{eq:controlledSDE} implies the following crucial equivalence whose proof is straightforward, and hence omitted.
\begin{lemma}\label{equivalence}
The value function of the primal control problem \eqref{cost_func} and infimum of the new optimization problem \eqref{optimization} coincide. Moreover, if the minimizer exists for one problem, so does the other one and two minimizers also coincide.
\end{lemma}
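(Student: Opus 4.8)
The plan is to exhibit an explicit bijection between the feasible set of the primal problem \eqref{cost_func} and that of the abstract problem \eqref{optimization}, under which the two objectives agree pointwise; the equality of the infima and the correspondence of minimizers then follow at once. Write $\mathcal{A}$ for the feasible set of \eqref{optimization}, namely all pairs $(X,\alpha)\in\M^2\times\U[0,T]$ with $\Phi_i(X,\alpha)\leq_{K_1}0$ for $i\in I_1$, $\Psi_j(X,\alpha)\leq_{K_2}0$ for $j\in I_2$, and $F(X,\alpha)=0$. Define the map $\Theta:\U_{ad}[0,T]\to\M^2\times\U[0,T]$ by $\Theta(\alpha):=(X^{\alpha},\alpha)$, where $X^{\alpha}$ is the solution of the controlled McKean--Vlasov SDE \eqref{eq:controlledSDE} associated with $\alpha$.

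The first step is to check that $\Theta$ is well-defined and takes values in $\mathcal{A}$. By the well-posedness statement recorded in the remark following \assref{ass1}, for every $\alpha\in\U[0,T]$ the SDE \eqref{eq:controlledSDE} admits a unique solution $X^{\alpha}$ with $\E[\sup_{t\in[0,T]}|X_t^{\alpha}|^2]<\infty$; since its drift and diffusion coefficients are square-integrable under \assref{ass1}, $X^{\alpha}$ enjoys the It\^o representation \eqref{eq:ito-rep} and hence lies in $\M^2$. Writing the dynamics in integral form shows $F(X^{\alpha},\alpha)=0$. For $\alpha\in\U_{ad}[0,T]$ the defining constraints \eqref{expected_constraint} and \eqref{path_constraint} hold, and by the definitions in \eqref{func_def}, the continuity supplied by \lemref{continuous_ell} that places $\Phi_i(X^{\alpha},\alpha)$ in $\tY_i=C([0,T];\R)$, and the cone descriptions in \eqref{eq:K12}, these are equivalent to $\Phi_i(X^{\alpha},\alpha)\leq_{K_1}0$ and $\Psi_j(X^{\alpha},\alpha)\leq_{K_2}0$. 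Thus $\Theta(\alpha)\in\mathcal{A}$.

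The second step is to show $\Theta$ is a bijection onto $\mathcal{A}$ that preserves the objective. Injectivity is immediate since the second coordinate of $\Theta(\alpha)$ recovers $\alpha$. For surjectivity, take any $(X,\alpha)\in\mathcal{A}$: the constraint $F(X,\alpha)=0$ says precisely that $X$ solves \eqref{eq:controlledSDE} with control $\alpha$, so uniqueness of the solution forces $X=X^{\alpha}$; the remaining cone constraints then translate back into \eqref{expected_constraint}--\eqref{path_constraint}, so that $\alpha\in\U_{ad}[0,T]$ and $(X,\alpha)=\Theta(\alpha)$. Finally, comparing \eqref{cost_func0} with the definition of $J(X,\alpha)$ in \eqref{func_def} and substituting $X=X^{\alpha}$ gives $J(\Theta(\alpha))=J(\alpha)$. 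Taking infima over the two feasible sets, which correspond bijectively under $\Theta$, yields the coincidence of the two values, and a minimizer of one problem is carried by $\Theta$ or its inverse to a minimizer of the other.

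The argument is essentially bookkeeping; the only genuine input is the unique solvability of the controlled McKean--Vlasov SDE in $\M^2$, which is exactly what guarantees that the equality constraint $F(X,\alpha)=0$ pins down $X$ as $X^{\alpha}$ and thereby makes $\Theta$ a bijection. I expect no real obstacle beyond invoking this well-posedness, already granted under \assref{ass1}; the remaining verifications are direct unwindings of the definitions \eqref{func_def} and \eqref{eq:K12}.
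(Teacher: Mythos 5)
Your proof is correct and is essentially the argument the paper has in mind: the paper omits the proof, stating only that it is ``straightforward'' and follows from the well-posedness of the controlled McKean--Vlasov SDE \eqref{eq:controlledSDE}, and your bijection $\Theta(\alpha)=(X^{\alpha},\alpha)$ between the two feasible sets --- with uniqueness of the SDE solution forcing $X=X^{\alpha}$ whenever $F(X,\alpha)=0$ --- is exactly that straightforward argument, spelled out. Nothing in your write-up deviates from or adds to what the paper intends; it simply makes the omitted bookkeeping explicit.
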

Consequently, it is equivalent for us to consider the abstract optimization problem \equref{optimization}. 

\subsection{FJ optimality condition for abstract problem \eqref{optimization}}

We next apply \thmref{fj_condition} to our C-MFC problem in \eqref{optimization} and derive the SMP for the necessary optimality condition. To this end, we need to provide the representations of Fr\'{e}chet derivatives of the mappings $J,~\Phi_i~(i\in I_1),~\Psi_j~(j\in I_2)$ and $F$ defined in \eqref{func_def}. In the sequel, let us set $\mu_t=\Law(X_t)$ and $\rho_t=\Law(X_t,\alpha_t)$ for $t\in[0,T]$.

\begin{lemma}\label{Frechet_dif}
The mappings $J$, $\Phi_i$ {\rm($i\in I_1$)}, $\Psi_j~(j\in I_2)$ and $F$ defined in \eqref{func_def} are continuously Fr\^{e}chet differentiable and their Fr\^{e}chet derivatives have the following representations that, for $(t,(X,\alpha),L,K)\in[0,T]\times\tX\times\M^2\times\Lb_{\Fb}^2$, $i\in I_1$, and $j\in I_2$,
\begin{align*}
D_X&J(X,\alpha)(L)=\E\left[\int_0^T\langle \nabla_xf(t,X_t,\rho_t,\alpha_t),L_t\rangle\d t+\langle \nabla_x g(X_T,\mu_T),L_T\rangle\right.\nonumber\\
&\qquad+\left.\E'\left[\int_0^T\langle\pa_{\mu}f(t,X_t,\rho_t,\alpha_t)(X_t',\alpha_t'),L_t'\rangle\d t+\langle\pa_{\mu}g(X_T,\mu_T)(X_T'),L_T'\rangle\right]\right],\\
D_X&\Phi_i(X,\alpha)(L)(t)=-\E\left[\langle\nabla_x\phi^i(t,X_{t},\mu_t,\widetilde{\alpha}^t),L_t\rangle+\E'\left[\langle\pa_{\mu}\phi^i(t,X_t,\mu_t,\widetilde{\alpha}^t)(X_t'),L_t'\rangle \right] \right],\\
D_X&\Psi_i(X,\alpha)(L)(t)=-\langle \nabla_x\psi^j(t,X_t,\rho_t,\alpha_t),L_t\rangle-\E'\left[\langle\pa_{\mu}\psi^j(t,X_t,\rho_t,\alpha_t)(X_t',\alpha_t'),L_t'\rangle \right],\\[0.4em]
D_X&F(X,\alpha)(L)(\cdot)=L_{\cdot}-\int_0^{\cdot}\nabla_xb(s,X_s,\rho_s,\alpha_s)L_s\d s-\int_0^{\cdot}\nabla_x\sigma(s,X_s,\rho_s,\alpha_s)L_s\d W_s\nonumber\\
&\quad-\int_0^{\cdot}\E'\left[\pa_{\mu}b(s,X_s,\rho_s,\alpha_s)(X_s',\alpha_s')L_s'\right]\d s-\int_0^{\cdot}\E'\left[\pa_{\mu}\sigma(s,X_s,\rho_s,\alpha_s)(X_s',\alpha_s')L_s'\right]\d W_s,\nonumber\\
D_{\alpha}&J(X,\alpha)(K)=\E\left[\int_0^T\langle \nabla_uf(t,X_t,\rho_t,\alpha_t),K_t\rangle\d t+\int_0^T\langle\pa_{\nu}f(t,X_t,\rho_t,\alpha_t)(X_t',\alpha_t'),K_t'\rangle\d t\right],\\
D_{\alpha}&\Phi_i(X,\alpha)(K)(t)=-\E\left[\langle D_{\gamma}\phi^i(t,X_{t},\mu_t,\widetilde{\alpha}^t),\widetilde{K}^t\rangle_{\X}\right],\\
D_{\alpha}&\Psi_i(X,\alpha)(K)(t)=-\langle\nabla_u\psi^j(t,X_{t},\rho_t,\alpha_t),K_t\rangle-\E'\left[\langle\pa_{\nu}\psi^j(t,X_t,\rho_t,\alpha_t)(X_t',\alpha_t'),K_t'\rangle \right],\\
D_{\alpha}&F(X,\alpha)(K)(\cdot)=-\int_0^{\cdot}\nabla_ub(s,X_s,\rho_s,\alpha_s)K_s\d s-\int_0^{\cdot}\nabla_u\sigma(s,X_s,\rho_s,\alpha_s)K_s\d W_s\nonumber\\
&\quad-\int_0^{\cdot}\E'\left[\pa_{\nu}b(s,X_s,\rho_s,\alpha_s)(X_s',\alpha_s')K_s'\right]\d s-\int_0^{\cdot}\E'\left[\pa_{\nu}\sigma(s,X_s,\rho_s,\alpha_s)(X_s',\alpha_s')K_s'\right]\d W_s.
\end{align*}
Here, we have used the notation $Mz:=(M_1z,\ldots,M_rz)\in \R^{n\times r}$ for any $M=(M_1,\ldots,M_r)\in\R^{n\times n\times r}$ and $z\in\R^n$, and $(X',\alpha',L',K')$ is  an independent copy of $(X,\alpha,L,K)$ defined on the probability space $(\Omega',\F',P')$ constructed in \lemref{Lpartialdif}.
\end{lemma}

\begin{proof}
The result is a direct consequence of straightforward calculations, \lemref{Lpartialdif} as well as the fact that $\|L\|_{\Lb_{\Fb}^2}\leq\|L\|_{\M^2}$.
\end{proof}

We next provide the well-posed lemma whose proof is delegated into Appendix~\ref{appendix}.
\begin{lemma}\label{well-posedSDE}
Let $(B,\Sigma):[0,T]\times\Omega\times\R^n\times L^2(\Omega;\R^n)\mapsto\R^n\times\R^{n\times r}$ be progressively measurable mappings and satisfy that, there exists a constant $M>0$ such that, for all $(t,\omega)\in [0,T]\times\Omega$ and $(x,X),(y,Y)\in\R^n\times L^2(\Omega;\R^n)$,
{\begin{align}\label{Lipschitz}
&|(B,\Sigma)(t,\omega,x,X)-(B,\Sigma)(t,\omega,y,Y)|\leq M\left\{|x-y|+\sqrt{\E[|X-Y|^2]}\right\},
\end{align}}
and for any $R>0$,
\begin{align}\label{L2}
\E\left[\int_0^T\sup_{|x|^2+\E[|X|^2]\leq R}\left(|B(t,\cdot,x,X)|^2+|\Sigma(t,\cdot,x,X)|^2\right)\d t\right]<+\infty.
\end{align}
Then, the following {\rm(}self-dependent{\rm)} SDE admits a unique strong solution
\begin{align}\label{selfSDE}
\d Y_t=B(t,\omega,Y_t,\overline{Y}_t)\d t+\Sigma(t,\omega,Y_t,\overline{Y}_t)\d W_t,\quad Y_0\in L^2((\Omega,\F_0,\Pb);\R^n),\quad t\in(0,T].
\end{align}
Here, for $t\in[0,T]$, $\overline{Y}_t$ stands for the r.v. $Y_t$ instead of the evaluation $Y_t(\omega)$ at $\omega\in\Omega$.
\end{lemma}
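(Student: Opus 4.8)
The plan is to prove existence and uniqueness by a Banach fixed-point (Picard) argument carried out on the complete space $\mathcal{S}^2$ of $\Fb$-progressively measurable continuous processes $Y=(Y_t)_{t\in[0,T]}$ with $\|Y\|_{\mathcal{S}^2}^2:=\E[\sup_{t\in[0,T]}|Y_t|^2]<\infty$. On this space I would introduce the solution map $\Gamma$ defined by
\[
\Gamma(Y)_t := Y_0 + \int_0^t B(s,\omega,Y_s,\overline{Y}_s)\,\d s + \int_0^t \Sigma(s,\omega,Y_s,\overline{Y}_s)\,\d W_s,\qquad t\in[0,T],
\]
so that a fixed point of $\Gamma$ is precisely a strong solution of \eqref{selfSDE}. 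The whole proof then reduces to checking that $\Gamma$ is a well-defined self-map of $\mathcal{S}^2$ which becomes a strict contraction after passing to a suitable power (or a weighted norm).

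For well-definedness, I would first argue that, for fixed $Y\in\mathcal{S}^2$, the integrand $(s,\omega)\mapsto B(s,\omega,Y_s(\omega),\overline{Y}_s)$ is progressively measurable: one regards $s\mapsto\overline{Y}_s$ as an $L^2(\Omega;\R^n)$-valued path, which is continuous (hence measurable) since $Y\in\mathcal{S}^2$, and then composes with $B$ using its joint measurability in $(t,\omega)$ together with the continuity in $(x,X)$ furnished by \eqref{Lipschitz}. The $L^2$-integrability follows by combining \eqref{Lipschitz} with \eqref{L2}: the Lipschitz bound gives the linear-growth estimate $|B(s,\omega,x,X)|\le |B(s,\omega,0,0)| + M(|x|+\|X\|_{L^2})$, while \eqref{L2}, applied with any $R>0$ (the origin lies in every ball, so the sup dominates $|B(s,\cdot,0,0)|^2$), yields $\int_0^T\E[|B(s,\cdot,0,0)|^2]\,\d s<\infty$; together with $\sup_s\E[|Y_s|^2]\le\|Y\|_{\mathcal{S}^2}^2$ this controls $\int_0^T\E[|B(s,\omega,Y_s,\overline{Y}_s)|^2]\,\d s$, and likewise for $\Sigma$. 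Doob's maximal inequality and the Burkholder--Davis--Gundy inequality then show $\Gamma(Y)\in\mathcal{S}^2$.

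For the contraction, given $Y,Z\in\mathcal{S}^2$ I would use \eqref{Lipschitz} to obtain the key estimate $\E[|B(s,\omega,Y_s,\overline{Y}_s)-B(s,\omega,Z_s,\overline{Z}_s)|^2]\le C\,\E[|Y_s-Z_s|^2]$ (and the same for $\Sigma$), the point being that the mean-field term contributes a deterministic quantity dominated by $\E[|Y_s-Z_s|^2]$. Applying Doob/BDG as above gives $\E[\sup_{t\le u}|\Gamma(Y)_t-\Gamma(Z)_t|^2]\le C\int_0^u\E[\sup_{r\le s}|Y_r-Z_r|^2]\,\d s$. Iterating over $\Gamma^N$ produces the factor $(CT)^N/N!\to 0$, so some power of $\Gamma$ is a strict contraction, and the standard corollary of the Banach fixed-point theorem yields a unique fixed point $Y\in\mathcal{S}^2$; since this $Y$ carries the It\^o representation in \eqref{selfSDE} with square-integrable drift and diffusion, it in fact belongs to $\M^2$.

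The main obstacle I anticipate is not the fixed-point mechanics but the genuine self-dependence of the coefficients on the whole random variable $\overline{Y}_s$ rather than on its law alone: one must set up the analysis at the level of $L^2(\Omega)$-valued paths so that the measurability of the composed coefficients is rigorously justified, and one must verify that the averaged Lipschitz term in \eqref{Lipschitz} closes the contraction estimate against $\E[|Y_s-Z_s|^2]$ with a constant independent of $s$. Once these two points are secured, the remainder proceeds exactly as in the classical theory of Lipschitz SDEs.
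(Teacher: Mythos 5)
Your proposal is correct and, like the paper, rests on a Picard fixed-point scheme, but the architecture is genuinely different. The paper's proof is nested: for each frozen input $Z\in\Lb_{\Fb}^2$ it solves the \emph{classical} SDE $\d Y_t=B(t,\omega,Y_t,\overline{Z}_t)\d t+\Sigma(t,\omega,Y_t,\overline{Z}_t)\d W_t$ by citing Karatzas--Shreve (with the mean-field slot frozen, this is an ordinary SDE with random coefficients), obtaining a map $G:\Lb_{\Fb}^2\to\Lb_{\Fb}^2$, $G(Z)=Y$; a Gronwall estimate gives $\E[|Y^1_t-Y^2_t|^2]\le M\int_0^t\E[|Z^1_s-Z^2_s|^2]\d s$, and iterating $G$ from a constant process yields increments of order $1/n!$, hence a Cauchy sequence in $\Lb_{\Fb}^2$ whose limit is the desired fixed point, with uniqueness again by Gronwall. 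You instead run a single, direct Picard map $\Gamma$ in which both the pointwise slot $Y_s(\omega)$ and the mean-field slot $\overline{Y}_s$ are fed from the same process, and you show that some iterate $\Gamma^N$ contracts on $\mathcal{S}^2$. What the paper's route buys is that the measurability and integrability of the composed coefficients, which you must verify by hand (your Carath\'eodory-type argument and the linear-growth bound extracted from \eqref{Lipschitz} and \eqref{L2}), are largely delegated to the classical existence theorem in the inner solve, and the contraction estimate only needs to control the frozen slot. What your route buys is a self-contained one-shot argument in the stronger norm $\E[\sup_{t}|\cdot_t|^2]$, giving pathwise-sup estimates via Doob/BDG directly rather than pointwise-in-time second moments, and it avoids checking that the frozen-coefficient SDE fits the hypotheses of the classical theorem. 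One caveat applies equally to both proofs: condition \eqref{Lipschitz} is stated with $\E[|X-Y|^2]$ rather than $\left(\E[|X-Y|^2]\right)^{1/2}$, and both your contraction estimate and the paper's Gronwall estimate implicitly read it as the latter ($L^2$-norm) version; taken literally, squaring the bound produces a term $\left(\E[|Y_s-Z_s|^2]\right)^2$ that does not close the linear estimate globally. This is a defect of the statement as written, not of your argument relative to the paper's.
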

It is not difficult to see that if the solution $Y=(Y_t)_{t\in[0,T]}$ to the SDE~\equref{selfSDE} exists, it must belong to $\M^2$ due to the square-integrable condition \equref{L2}. Then, in lieu of Lemma~\ref{well-posedSDE}, we have the next result.

\begin{lemma}\label{bijective}
For any $(X,\alpha)\in\tX$, the mapping $D_XF(X,\alpha):\M^2\mapsto\M^2$ is bijective.
\end{lemma}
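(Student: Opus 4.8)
The plan is to read the identity $D_XF(X,\alpha)(L)=\eta$, for a prescribed target $\eta\in\M^2$, as a \emph{linear} McKean--Vlasov (self-dependent) SDE for the unknown $L$, and then to invoke \lemref{well-posedSDE}. Using the representation of $D_XF$ from \lemref{Frechet_dif} together with the It\^o decomposition $\eta_t=\eta_0+\int_0^t\eta^1_s\d s+\int_0^t\eta^2_s\d W_s$ of an element $\eta\in\M^2$, the equation $D_XF(X,\alpha)(L)=\eta$ is, after moving the $L$-terms to the right, equivalent to
\begin{align*}
L_t=\eta_0&+\int_0^t\left(\eta^1_s+\nabla_xb(s,X_s,\rho_s,\alpha_s)L_s+\E'\!\left[\pa_{\mu}b(s,X_s,\rho_s,\alpha_s)(X_s',\alpha_s')L_s'\right]\right)\d s\\
&+\int_0^t\left(\eta^2_s+\nabla_x\sigma(s,X_s,\rho_s,\alpha_s)L_s+\E'\!\left[\pa_{\mu}\sigma(s,X_s,\rho_s,\alpha_s)(X_s',\alpha_s')L_s'\right]\right)\d W_s.
\end{align*}
This is precisely an equation of the form \equref{selfSDE} with initial datum $L_0=\eta_0\in L^2((\Omega,\F_0,\Pb);\R^n)$, where I take the coefficients $B(t,\omega,x,Z)$ and $\Sigma(t,\omega,x,Z)$ to be the integrands above with $L_s$ replaced by the point $x\in\R^n$ and $L_s'$ replaced by the independent copy $Z'$ of $Z\in L^2(\Omega;\R^n)$; note that, since $X_t,\alpha_t$ live on the same space as $Z$, the triple $(X_t',\alpha_t',Z')$ (hence the mean-field term) is unambiguously defined.

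Next I would verify that these coefficients satisfy the hypotheses of \lemref{well-posedSDE}. Progressive measurability is immediate from the adaptedness of $\eta^1,\eta^2,X,\alpha$ and the measurability of $\nabla_xb,\nabla_x\sigma,\pa_\mu b,\pa_\mu\sigma$. For the Lipschitz estimate \equref{Lipschitz}, the boundedness $|\nabla_xb|,|\nabla_x\sigma|\le M$ follows from the uniform Lipschitz continuity in \assref{ass1}-{\rm(A3)}, which controls the $x$-increment; for the mean-field increment I would apply Cauchy--Schwarz on the copy space and use the uniform bound \equref{boundedLdif} to get
\begin{align*}
\left|\E'\!\left[\pa_{\mu}b(s,X_s,\rho_s,\alpha_s)(X_s',\alpha_s')(Z'-W')\right]\right|\le M\left(\E\big[|Z-W|^2\big]\right)^{\frac12},
\end{align*}
and the same for $\sigma$, giving the required Lipschitz bound. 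For the integrability \equref{L2}, on the set $|x|^2+\E[|Z|^2]\le R$ the linear and mean-field terms are bounded by $M\sqrt{R}$ while the inhomogeneous parts contribute $|\eta^1_t|,|\eta^2_t|$, whose squares are $\d t\otimes\d\Pb$-integrable because $\eta\in\M^2$. Hence \lemref{well-posedSDE} yields a unique strong solution $L$, which, by the remark following that lemma, belongs to $\M^2$.

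Finally I would conclude bijectivity. Surjectivity is exactly the existence part: the solution $L$ constructed above satisfies $D_XF(X,\alpha)(L)=\eta$ by the reverse of the reformulation. Injectivity follows from linearity of $D_XF(X,\alpha)$ together with uniqueness: if $D_XF(X,\alpha)(L_1)=D_XF(X,\alpha)(L_2)$, then $L_1-L_2$ solves the homogeneous version of the SDE above (i.e.\ with $\eta\equiv0$, so $\eta_0=0$, $\eta^1\equiv\eta^2\equiv0$) with zero initial value, whence $L_1=L_2$ by the uniqueness from \lemref{well-posedSDE}. The main obstacle I anticipate is not the bijectivity argument itself but the careful verification that the mean-field term $\E'[\pa_\mu b(\cdots)L']$ fits the self-dependent framework of \lemref{well-posedSDE}: one must check that it defines a genuine function of the random variable $\overline{L}_t\in L^2(\Omega;\R^n)$ and that its $L^2$-Lipschitz constant is controlled solely through \equref{boundedLdif} rather than through any pointwise bound on $\pa_\mu b$, which is precisely why \assref{ass1}-{\rm(A3)} and the derived estimate \equref{boundedLdif} are the right inputs.
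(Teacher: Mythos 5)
Your proposal is correct and takes essentially the same route as the paper: rewrite $D_XF(X,\alpha)(L)=\eta$ as a linear McKean--Vlasov (self-dependent) SDE and invoke \lemref{well-posedSDE}, with surjectivity given by existence of the strong solution and injectivity given by linearity plus uniqueness for the homogeneous equation. If anything, your write-up is slightly more careful than the paper's, which omits the mean-field terms $\E'[\pa_{\mu}b(\cdots)L']$ and $\E'[\pa_{\mu}\sigma(\cdots)L']$ from the displayed homogeneous equation in its injectivity step and does not spell out the Cauchy--Schwarz verification of the $L^2$-Lipschitz hypothesis via \equref{boundedLdif}.
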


Recall the closed and convex cones $K_1$ and $K_2$ introduced in \eqref{eq:K12}. It follows from the Riesz representation theorem that 
\begin{align}\label{eq:K12+}
K_1^+=\Mc^+[0,T],\quad K_2^+=\{f\in\Lb^{2}_{\Fb,\R};~f(t,\omega)\geq 0,~\forall(t,\omega)\in[0,T]\times\Omega\}
=\Lb_{\Fb,\R+}^2.
\end{align}
The next result follows from \thmref{fj_condition} together with \eqref{eq:XYHexam} and \eqref{eq:K12+}.
\begin{lemma}[FJ optimality condition for problem \eqref{optimization}]\label{lem:FJCondAbstractprob}
Let $(\widehat{X},\widehat{\alpha})\in C=\M^2\times\U[0,T]$ be an optimal pair to the C-MFC problem \eqref{cost_func} {\rm(}equivalent to the abstract formulation \eqref{optimization}{\rm)}. Then,  there exist (nontrivial) multipliers $r_0\geq0$, $\mu^i\in \Mc^+[0,T]$ for $i\in I_1$, $\eta^j\in\Lb^{2}_{\Fb,\R+}$ for $j\in I_2$ and $\lambda\in\M^2$ which are not all zeros such that
\begin{align}\label{complementary_slackness1} 
\begin{cases}
	\displaystyle    \int_{[0,T]}\Phi_i(\widehat{X},\widehat{\alpha})(t)\mu^i(\d t)=0,~\forall i\in I_1;\quad \E\left[\int_0^T\Psi_j(\widehat{X},\widehat{\alpha})(t)\eta^j_t\d t\right]=0,~\forall j\in I_2,\\[1.4em]
	\displaystyle r_0+\sum_{i\in I_1}\|\mu^i\|_{\rm TV}+\sum_{j\in I_2}\|\eta^j\|_{\Lb^{2}_{\Fb,\R}}+\|\lambda\|_{\M^2}=1,\\[1.4em]
	\displaystyle -\xi_1= r_0 D_XJ(\widehat{X},\widehat{\alpha})(\cdot)+\sum_{i\in I_1}\int_{[0,T]}D_X\Phi_i(\widehat{X},\widehat{\alpha})(\cdot)(t)\mu^i(\d t)\\[0.8em]
	\displaystyle \quad\qquad+\sum_{j\in I_2}\E\left[\int_0^TD_X\Psi_j(\widehat{X},\widehat{\alpha})(\cdot)(t)\eta^j_t\d t\right]-\left\langle \lambda, D_XF(\widehat{X},\widehat{\alpha})(\cdot)\right\rangle_{\M^2}~~\text{\rm in}~\M^2,\\[1.4em]
	\displaystyle -\xi_2= r_0 D_{\alpha}J(\widehat{X},\widehat{\alpha})(\cdot)+\sum_{i\in I_1}\int_{[0,T]}D_{\alpha}\Phi_i(\widehat{X},\widehat{\alpha})(\cdot)(t)\mu^i(\d t)\\[0.8em]
	\displaystyle \quad\qquad+\sum_{j\in I_2}\E\left[\int_0^TD_{\alpha}\Psi_j(\widehat{X},\widehat{\alpha})(\cdot)(t)\eta^j_t\d t\right]-\left\langle \lambda, D_{\alpha}F(\widehat{X},\widehat{\alpha})(\cdot)\right\rangle_{\M^2}~~\text{\rm in}~\Lb_{\Fb}^2,
\end{cases}
\end{align}
where $\|\cdot\|_{\rm TV}$ denotes the total variation norm on the space of all {\rm(}signed{\rm)} Radon measures and the element $\xi=(\xi_1,\xi_2)\in N_C(\widehat{X},\widehat{\alpha})\subset\tX=\M^2\times\Lb_{\Fb}^2$. 
\end{lemma}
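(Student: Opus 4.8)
The plan is to recognize the abstract problem \eqref{optimization} as a concrete instance of the template \eqref{banach_space_optimization} and to apply \thmref{fj_condition} directly; the substance of the argument lies entirely in identifying the relevant dual spaces and translating the abstract functional pairings into integrals. With the choices recorded in \eqref{eq:XYHexam}, the decision space is $\tX=\M^2\times\Lb_{\Fb}^2$, the objective is $J$, the inequality constraints are $\Phi_i\leq_{K_1}0$ for $i\in I_1$ and $\Psi_j\leq_{K_2}0$ for $j\in I_2$ relative to the closed convex cones $K_1\subset C([0,T];\R)$ and $K_2\subset\Lb^2_{\Fb,\R}$ from \eqref{eq:K12}, there is a single equality constraint $F=0$ valued in the Hilbert space $\tH=\M^2$, and the feasible set is the closed convex set $C=\M^2\times\U[0,T]$. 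Thus $I=I_1\cup I_2$ plays the role of the inequality index set and the equality index set reduces to a singleton. Before applying \thmref{fj_condition} I would verify its hypotheses: the continuous Fr\'echet differentiability of $J,\Phi_i,\Psi_j,F$ is precisely \lemref{Frechet_dif}; the target spaces $\tY_i=C([0,T];\R)$ are separable by Stone--Weierstrass and $\tY_j=\Lb^2_{\Fb,\R}$ are separable Hilbert spaces, so all the $\tY$'s are separable Banach spaces as required; and $\tH=\M^2$ is a Hilbert space.

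Applying \thmref{fj_condition} then produces nontrivial multipliers $r_0\ge0$, $\lambda_i\in K_i^+\subset\tY_i^*$ for each $i\in I$, and, since the single equality constraint is valued in the Hilbert space $\tH=\M^2$, one equality multiplier that the theorem delivers directly as an element $\lambda\in\M^2$ (the Hilbert self-duality being absorbed into its statement). The next step is to realize these abstract objects concretely through the dual cones computed in \eqref{eq:K12+}. For $i\in I_1$ the Riesz representation identifies $\tY_i^*=C([0,T];\R)^*$ with the space of finite signed Radon measures under the total variation norm, and $K_1^+=\Mc^+[0,T]$, so I write the multiplier as $\mu^i\in\Mc^+[0,T]$. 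For $j\in I_2$ the space $\Lb^2_{\Fb,\R}$ is self-dual with $K_2^+=\Lb^2_{\Fb,\R+}$, so the multiplier is a nonnegative process, written $\eta^j\in\Lb^2_{\Fb,\R+}$.

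It then remains to rewrite conditions (i)--(iv) of \thmref{fj_condition} in these terms. Dual feasibility (i) is immediate. For complementary slackness (ii), the pairing $\langle\Phi_i(\widehat X,\widehat\alpha),\mu^i\rangle_{\tY_i,\tY_i^*}$ becomes the Lebesgue--Stieltjes integral $\int_{[0,T]}\Phi_i(\widehat X,\widehat\alpha)(t)\,\mu^i(\d t)$, while $\langle\Psi_j(\widehat X,\widehat\alpha),\eta^j\rangle_{\tY_j,\tY_j^*}=\E[\int_0^T\Psi_j(\widehat X,\widehat\alpha)(t)\,\eta^j_t\,\d t]$; these give the first two lines of \eqref{complementary_slackness1}. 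Non-triviality (iii) becomes the stated normalization once one notes $\|\mu^i\|_{\tY_i^*}=\|\mu^i\|_{\rm TV}$ and $\|\eta^j\|_{\tY_j^*}=\|\eta^j\|_{\Lb^2_{\Fb,\R}}$. For the Fritz--John identity (iv), I would use the product structure $\tX=\M^2\times\Lb_{\Fb}^2$: each Fr\'echet derivative splits as $D=(D_X,D_\alpha)$, and since $C=\M^2\times\U[0,T]$ is a product set one has $N_C(\widehat X,\widehat\alpha)=N_{\M^2}(\widehat X)\times N_{\U[0,T]}(\widehat\alpha)$, whence $\xi=(\xi_1,\xi_2)$ decomposes componentwise (in fact $\xi_1=0$, as $\M^2$ is the whole first factor). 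Projecting the single vector identity \eqref{FJ} onto the two factors, and realizing $\lambda_i\circ D\Phi_i$ as integration against $\mu^i$, the $\eta^j$ terms as $\Lb^2_{\Fb,\R}$ pairings, and $\lambda\circ DF$ as $\langle\lambda,DF(\cdot)\rangle_{\M^2}$, yields precisely the last two lines of \eqref{complementary_slackness1}.

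I expect the only genuinely substantive point --- as opposed to routine bookkeeping --- to be the duality $C([0,T];\R)^*\cong\Mc[0,T]$ together with the identification $K_1^+=\Mc^+[0,T]$. This is what forces the expectation-constraint multiplier to be a \emph{measure} rather than an integrable function, reflecting the pointwise-in-time nature of the constraint $t\mapsto\E[\phi^i(t,\cdot)]\ge0$, which lives in $C([0,T];\R)$. The remaining ingredients --- the componentwise splitting of the normal cone of the product set $C$ and of the Fr\'echet derivatives --- are routine once the correct pairings have been fixed.
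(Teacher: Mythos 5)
Your proposal is correct and follows exactly the paper's own (one-line) argument: the paper proves this lemma precisely by instantiating \thmref{fj_condition} with the spaces in \eqref{eq:XYHexam} and the dual cones in \eqref{eq:K12+}, and your translation of the pairings, the normalization, and the product-space splitting supplies the same bookkeeping the paper leaves implicit. The only cosmetic remark is that for $\tY_j=\Lb^2_{\Fb,\R}$ you do not actually need separability (which would require a countably generated filtration): reflexivity of the Hilbert space already gives the weak-$^*$ sequential compactness used in the proof of \thmref{fj_condition}, via \lemref{Banach}-(ii).
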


Building upon the FJ optimality condition for the abstract problem \eqref{optimization}, we can obtain the stochastic maximum principle (SMP) for our C-MFC problems, which is another main result of this paper.

\begin{theorem}[SMP for C-MFC problems]\label{SMP_con} Let $(\widehat{X},\widehat{\alpha})\in C:=\M^2\times\U[0,T]$ be an optimal solution to our C-MFC problem \eqref{cost_func}. There exist some $r_0\geq 0$, $\mu^i\in\Mc^+[0,T]$ for $i\in I_1$ and $\eta^j\in \Lb^2_{\Fb,\R+}$ for $j\in I_2$ such that 
\begin{itemize}
\item [{\rm(i)}] the Radon measure $\mu^i$ is supported on $\{t\in[0,T];~\E[\phi^i(t,\widehat{X}_t,\widehat{\mu}_t,\widetilde{\widehat{\alpha}}^t)]=0\}$ for all $i\in I_1$,
\item [{\rm(ii)}] for $j\in I_2$, since $\eta_t^j\geq 0$, $\d t\times\d \Pb$-$\as$ and $\eta^j\in\Lb^{2}_{\Fb,\R+}$, we can define the {\rm(}continuous{\rm)} increasing process $A^j=(A_t^j)_{t\in[0,T]}$ by $A_t^j=\int_0^t\eta_s^j\d s$. Then, for $\Pb$-$\as$ $\omega$, the process $A^j(\omega)$ only increases on the set $\{t\in[0,T];~\psi^j(t,\widehat{X}_t(\omega),\widehat{\rho}_t,\widehat{\alpha}_t(\omega))=0\}$, $\d t$-$\as$,
\item [{\rm(iii)}] it holds that $\displaystyle r_0+\sum_{i\in I_1}\mu^i([0,T])+\sum_{j\in I_2}\|\eta^j\|_{\Lb^{2}_{\Fb,\R}}=1$.
\end{itemize}
Furthermore, the stochastic {\rm(}first-order{\rm)} minimum condition holds that, for any $u\in U$ and $\d t\times\d \Pb$-$\as$, 
\begin{align}\label{SMP}
&\left\langle \nabla_u H^{r_0}(t,\widehat{X}_t,\widehat{\rho}_t,\widehat{\alpha}_t,Y_t,Z_t)+\E'\left[\pa_{\nu}H^{r_0}(t,\widehat{X}_t',\widehat{\rho}_t,\widehat{\alpha}_t',Y_t,Z_t)(\widehat{X}_t,\widehat{\alpha}_t)\right]\right.\nonumber\\
&\qquad-\left.\sum_{i\in I_1}\int_{[t,T]}D_{\gamma}\phi^i(s,\widehat{X}_s,\widehat{\mu}_s,\widetilde{\widehat{\alpha}}^s)_t\mu^i(\d s)-\sum_{j\in I_2}\nabla_u\psi^j(t,\widehat{X}_t,\widehat{\rho}_t,\widehat{\alpha}_t)\eta_t^j\right.\nonumber\\
&\qquad-\left.\sum_{j\in J_2}\E'\left[\pa_{\nu}\psi^j(t,\widehat{X}_t',\widehat{\rho}_t,\widehat{\alpha}_t')(\widehat{X}_t,\widehat{\alpha}_t){\eta_t^j}'\right], u-\widehat{\alpha}_t\right\rangle\geq 0,~ \forall u\in U.
\end{align}
Here,  $D_{\gamma}\phi^i(s,\widehat{X}_s,\widehat{\mu}_s,\widetilde{\widehat{\alpha}}^s)_t$ is the evaluation of the function $D_{\gamma}\phi^i(s,\widehat{X}_s,\widehat{\mu}_s,\widetilde{\widehat{\alpha}}^s)$ at time $t$, $\widehat{\mu}_t:=\Law(\widehat{X}_t)$ and $\widehat{\rho}_t:={\cal L}(\widehat{X}_t,\widehat{\alpha}_t)$ for $t\in[0,T]$. The Hamiltonian $H^{r_0}:[0,T]\times\R^n\times\Pc_2(\R^n\times\R^l)\times\R^l\times\R^n\times\R^{n\times r}\mapsto \R$ is defined  by
\begin{align}\label{Hamiltonian}
H^{r_0}(t,x,\rho,u,y,z):=\langle b(t,x,\rho,u),y\rangle+\tr\left(\sigma(t,x,\rho,u)z^{\T}\right)+r_0f(t,x,\rho,u),
\end{align}
and the process pair $(Y,Z)=(Y_t,Z_t)_{t\in[0,T]}$ taking values in $\R^n\times\R^{n\times r}$ is the unique solution to the following general type of constrained BSDE:
\begin{align}\label{BSDE}
Y_t&=r_0\nabla_xg(\widehat{X}_T,\widehat{\mu}_T)+r_0\E'\left[\pa_{\mu}g(\widehat{X}_T',\widehat{\mu}_T)(\widehat{X}_T)\right]+\int_t^T\nabla_x H^{r_0}(s,\widehat{X}_s,\widehat{\rho}_s,\widehat{\alpha}_s,Y_s,Z_s)\d s\nonumber\\
&\quad+\int_t^T\E'\left[\pa_{\mu}H^{r_0}(s,\widehat{X}_s',\widehat{\rho}_s,\widehat{\alpha}_s',Y_s,Z_s)(\widehat{X}_s,\widehat{\alpha}_s)\right]\d s\\
&\quad-\sum_{i\in I_1}\int_{[t,T]}\left(\nabla_x\phi^i(s,\widehat{X}_s,\widehat{\mu}_s,\widetilde{\widehat{\alpha}}^s)+\E'\left[\pa_{\mu}\phi^i(s,\widehat{X}_s',\widehat{\mu}_s,\widetilde{\widehat{\alpha}}^{s'})(\widehat{X}_s) \right]\right)\mu^i(\d s)\nonumber\\
&\quad-\sum_{j\in I_2}\int_t^T\left(\nabla_x\psi^j(s,\widehat{X}_s,\widehat{\rho}_s,\widehat{\alpha}_s)\eta_s^j+\E'\left[\pa_{\mu}\psi^j(s,\widehat{X}_s',\widehat{\rho}_s,\widehat{\alpha}_s')(\widehat{X}_s,\widehat{\alpha}_s){\eta_s^j}'\right]\right)\d s-\int_t^TZ_s\d W_s.\nonumber
\end{align}
Here, $(\widehat{X}',\widehat{\alpha}')$ is  an independent copy of $(\widehat{X},\widehat{\alpha})$ defined on the probability space $(\Omega',\F',P')$ constructed following \lemref{Lpartialdif}.
\end{theorem}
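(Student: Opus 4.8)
The plan is to start from the Fritz--John condition already obtained for the abstract problem in \lemref{lem:FJCondAbstractprob} and to convert its two stationarity identities into the adjoint BSDE \equref{BSDE} and the pointwise minimum condition \equref{SMP}. First I would exploit the product structure $C=\M^2\times\U[0,T]$: since the $\M^2$-component carries no constraint, $\widehat X$ lies in the interior of $\M^2$, the normal cone splits as $N_C(\widehat X,\widehat\alpha)=\{0\}\times N_{\U[0,T]}(\widehat\alpha)$, and hence $\xi_1=0$. Consequently the $\M^2$-identity (the $X$-direction) and the $\Lb_{\Fb}^2$-identity (the $\alpha$-direction) decouple: the former will be used to construct and identify the adjoint pair $(Y,Z)$, while the latter, together with $\xi_2\in N_{\U[0,T]}(\widehat\alpha)$, will produce \equref{SMP}.

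Next I would use the multipliers $r_0,\mu^i,\eta^j$ from \lemref{lem:FJCondAbstractprob} to \emph{define} the BSDE \equref{BSDE} and establish its well-posedness. This is not immediate, since the terms $\pa_\mu H^{r_0}$ make it of McKean--Vlasov type while the measures $\mu^i(\d s)$ introduce a singular finite-variation driver; however, $\nabla_x\phi^i(s,\widehat X_s,\widetilde{\widehat\alpha}^s)$ is evaluated along the \emph{fixed} optimal state, so $\sum_{i\in I_1}\int_{[\cdot,T]}\nabla_x\phi^i\,\mu^i(\d s)$ is a known adapted finite-variation process and $A^j_t=\int_0^t\eta^j_s\,\d s$ is absolutely continuous. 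Subtracting these known inhomogeneities reduces \equref{BSDE} to a standard mean-field BSDE with Lipschitz coefficients under \assref{ass1}, giving a unique $(Y,Z)$. The core of the argument is then to show that the $\M^2$-stationarity identity is equivalent to $(Y,Z)$ solving \equref{BSDE}, under the identification $\lambda_t=Y_t-\sum_{i\in I_1}\int_{[0,t]}\nabla_x\phi^i(s,\widehat X_s,\widetilde{\widehat\alpha}^s)\,\mu^i(\d s)$ and $Z=\lambda^2$ (the martingale integrand of $\lambda$), which is precisely what forces $\lambda$ into $\M^2$. To prove it I would apply the It\^o product rule to $\langle Y_t,L_t\rangle$ for an arbitrary test element $L\in\M^2$, viewing $L$ as driven by $M:=D_XF(\widehat X,\widehat\alpha)(L)$ through the linear mean-field equation $\d L_t=\d M_t+(\nabla_x b\,L_t+\E'[\pa_\mu b\,L_t'])\,\d t+(\nabla_x\sigma\,L_t+\E'[\pa_\mu\sigma\,L_t'])\,\d W_t$ (permissible since $D_XF$ is bijective by \lemref{bijective}). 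Integrating over $[0,T]$ and taking expectations, the $\nabla_x b$ and $\nabla_x\sigma$ terms arising from $\d Y_t$ match those from $\d L_t$, the terminal value $Y_T=r_0(\nabla_x g+\E'[\pa_\mu g])$ produces the $r_0 D_XJ$ contribution, and the $\mu^i$- and $\eta^j$-driven parts of $\d Y_t$ reproduce $\int_{[0,T]}D_X\Phi_i\,\mu^i(\d t)$ and $\E[\int_0^T D_X\Psi_j\,\eta^j_t\,\d t]$, using the representations of \lemref{Frechet_dif}.

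The main obstacle is handling the mean-field contributions: the duality computation must convert the terms $\E'[\pa_\mu b(s,\widehat X_s,\widehat\rho_s,\widehat\alpha_s)(\widehat X_s',\widehat\alpha_s')L_s']$ (and the analogous $\sigma$-term) appearing in $D_XF$ into the term $\E'[\pa_\mu H^{r_0}(s,\widehat X_s',\widehat\rho_s,\widehat\alpha_s',Y_s,Z_s)(\widehat X_s,\widehat\alpha_s)]$ appearing in \equref{BSDE}. This rests on the copy-space Fubini interchange underlying \lemref{Lpartialdif}: one pairs such a term with $Y$, applies $\E\otimes\E'$, and relabels the independent copy so that the L-derivative is transferred onto $Y$, turning the pairing into one between $L$ and $\E'[\pa_\mu H^{r_0}(\widehat X',\ldots)(\widehat X,\widehat\alpha)]$. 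Carrying out this interchange simultaneously with the bookkeeping of the singular $\mu^i(\d s)$ terms is the delicate part of the proof.

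Finally I would treat the $\alpha$-direction and the remaining assertions. Feeding the identification of $(Y,Z)$ into the $\Lb_{\Fb}^2$-stationarity identity and applying the same duality to $\langle\lambda,D_\alpha F\rangle$ converts it, via the representations of $D_\alpha J,D_\alpha\Phi_i,D_\alpha\Psi_j$ in \lemref{Frechet_dif}, into an $\Lb_{\Fb}^2$-inner-product inequality against $\xi_2\in N_{\U[0,T]}(\widehat\alpha)$. Since $\U[0,T]$ is the set of all $U$-valued adapted square-integrable controls, testing with $\beta=\widehat\alpha\,\mathbf 1_{A^c}+u\,\mathbf 1_{A}$ over measurable sets $A$ and a countable dense family of $u\in U$ (a standard localization/measurable-selection step) upgrades this to the pointwise $\d t\times\d\Pb$-a.s.\ inequality \equref{SMP}. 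The complementary-slackness relations of \lemref{lem:FJCondAbstractprob}, combined with the sign constraints $\Phi_i(\widehat X,\widehat\alpha)\leq_{K_1}0$, $\Psi_j(\widehat X,\widehat\alpha)\leq_{K_2}0$ and $\mu^i\geq0$, $\eta^j\geq0$, yield the support property (i) and the flat-off condition (ii). For the normalization (iii) I would again invoke \lemref{bijective}: if $r_0,\mu^i,\eta^j$ all vanished, the $\M^2$-identity would force $(D_XF)^*\lambda=0$, hence $\lambda=0$, contradicting the non-triviality in \lemref{lem:FJCondAbstractprob}; therefore $r_0+\sum_{i\in I_1}\mu^i([0,T])+\sum_{j\in I_2}\|\eta^j\|_{\Lb^2_{\Fb,\R}}>0$, and dividing all multipliers and $(Y,Z)$ by this positive quantity gives (iii).
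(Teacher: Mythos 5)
Your overall architecture matches the paper's: decouple the normal cone to get $\xi_1=0$, use the bijectivity of $D_XF$ (\lemref{bijective}) to rule out the degenerate case and rescale to obtain (iii), read off (i)--(ii) from the complementary slackness in \lemref{lem:FJCondAbstractprob}, and convert both stationarity identities by an It\^o-product-rule duality with the copy-space Fubini relabeling. However, the step you yourself call the core of the argument contains a genuine error: the identification of the FJ multiplier $\lambda$ with the adjoint pair $(Y,Z)$. You posit $\lambda_t=Y_t-\sum_{i\in I_1}\int_{[0,t]}\nabla_x\phi^i(s,\widehat X_s,\widetilde{\widehat\alpha}^s)\,\mu^i(\d s)$, i.e.\ you subtract the singular part of $Y$ so that $\lambda$ lands in $\M^2$. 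This is not the relation that the stationarity identity enforces. Since the pairing $\langle\lambda,D_XF(\widehat X,\widehat\alpha)(\cdot)\rangle_{\M^2}$ is taken in the inner product \equref{eq:innerM2}, it matches the It\^o \emph{components} of $\lambda$ against those of $D_XF(\widehat X,\widehat\alpha)(L)$; what the identity forces (and what the paper proves via the auxiliary process $p$ in \equref{adjointp}, concluding $p=\lambda^1$ and $p_0=\lambda_0$) is
\begin{align*}
\lambda_t=\lambda_0+\int_0^t Y_s\,\d s+\int_0^t Z_s\,\d W_s,\qquad \lambda_0=Y_0,
\end{align*}
i.e.\ $Y$ is the \emph{drift-rate} component $\lambda^1$ of the multiplier and $Z$ its martingale integrand $\lambda^2$. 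The measures $\mu^i$ and densities $\eta^j$ enter only through the dynamics of $\lambda^1=Y$ (the BSDE \equref{BSDE}); they are not subtracted from $\lambda$ itself, and there is no need for $Y$ to lie in $\M^2$.

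With your identification the duality computation cannot close: the drift rate of your $\lambda$ is the (negated) BSDE driver, so $\langle\lambda,D_XF(L)\rangle_{\M^2}$ contains no term pairing $Y_t$ against the drift rate $L^1_t$ of the test process, whereas the identity requires exactly such terms. A minimal counterexample: take $m=0$, no measure dependence, $b\equiv0$, $\sigma\equiv1$, $f\equiv0$, $g(x)=x^2/2$. Then $D_XF(L)=L$, $Y_t=r_0\E[\widehat X_T\mid\F_t]$, $Z_t=r_0$, and the stationarity identity $r_0\E[\widehat X_TL_T]=\langle\lambda,L\rangle_{\M^2}$ for all $L\in\M^2$ forces $\lambda^1_t=Y_t$; your identification (with no $\mu^i$ to subtract) gives $\lambda=Y$, whose drift rate is $0$, and the identity then fails for any $L$ with nonvanishing $L^1$, the term $\E\bigl[\int_0^TY_tL^1_t\,\d t\bigr]$ being unmatched. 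The same wrong pairing is fed into the $\alpha$-direction, so the derivation of \equref{SMP} breaks as well. The rest of your plan is sound and consistent with the paper: proving well-posedness of \equref{BSDE} first by treating the $\mu^i$- and $A^j$-drivers as known inhomogeneities, the measurable-selection/density argument upgrading the integrated inequality to the pointwise one, the Fubini exchange for the $\mu^i$ double integrals, and the nontriviality/rescaling step. Replacing your identification by the correct one above---for instance by introducing the paper's explicit process $p$ built from $(\lambda^1,\lambda^2)$ and showing $p=\lambda^1$, $p_0=\lambda_0$---repairs the proof.
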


\begin{remark}\label{rem:classicalSMP}
In terms of \eqref{Hamiltonian}, when $r_0=1$, our Hamiltonian $H^{r_0}$ reduces to the classical Hamiltonian. In particular, when $m=0$ {\rm(}i.e., there are no inequality constraints in problem \eqref{cost_func}{\rm)}, we have $r_0=1$ by using \corref{CQcon}, \lemref{bijective} (or simply \thmref{SMP_con}-(iii)), and hence \thmref{SMP_con} reduces to the classical stochastic maximum principle of the extended MFC problem (c.f. Theorem 3.2 of Acciaio et al. \cite{Carmona1}). 
\end{remark}

{\begin{remark}
Recently, Guo et al.~\cite{Guoxin} and Hu and Lyu \cite{Hu25} proposed a  distinct primal-dual formulation for MFG problems, where their primal optimization variable is chosen as the occupation measure. Within this framework, the Fokker-Planck equation is imposed as a linear equality constraint on occupation measure and the feasible dual variables correspond to supersolutions to the associated HJB equation. By establishing the strong duality under the regularity assumptions, the dual optimizer is the solution to their HJB equation. In contrast, our primal optimization for the C-MFC operates at the level of stochastic processes. Indeed, we choose both the state and control processes $(X, \alpha)$ as the optimization variables and treat the McKean-Vlasov SDE as an infinite-dimensional equality constraint. The dual variables (the Lagrangian multipliers) in our formulation correspond to the adjoint processes $(Y,Z)$ in the SMP (see \equref{SMP}) as the solution to the constrained BSDE in \equref{BSDE}. 

Although these two primal-dual approaches differ fundamentally, we conjecture an intrinsic connection between two (optimal) dual variables in the sense that the Lions derivative of the dual optimizer (as solution to the HJB equation) evaluated along the optimal state process coincides with our generalized Lagrangian multiplier, i.e., the solution to the constrained BSDE. That is, we conjecture that two different primal-dual formulations are connected via the relationship between the HJB equation and the SMP in the context of C-MFC problems, which has not been explored in the literature. We leave this interesting open problem for the future study.

\end{remark}}

\begin{proof}[Proof of \thmref{SMP_con}]
By using the definition \eqref{eq:NCX} of normal cone $N_C(\widehat{X},\widehat{\alpha})$, it follows that, for all $(X,\alpha)\in C:=\M^2\times\U[0,T]$ and $\xi=(\xi_1,\xi_2)\in N_C(\widehat{X},\widehat{\alpha})$,
\begin{align*}
\langle \xi_1,\widehat{X}-X\rangle_{\M^2}+\langle\xi_2,\widehat{\alpha}-\alpha\rangle_{\Lb_{\Fb}^2}\geq 0.
\end{align*}
Taking $\alpha=\widehat{\alpha}$. Then, we can conclude that $\xi_1=0$, and hence $\xi_2\in N_{\U[0,T]}(\widehat{\alpha})$, i.e., 
\begin{align}\label{optimal_condition}
\langle\xi_2,\widehat{\alpha}-\alpha\rangle_{\Lb_{\Fb}^2}\geq 0,~~\forall\alpha\in\U[0,T].
\end{align}
We can thus rewrite the fourth equality documented in \eqref{complementary_slackness1} of \lemref{lem:FJCondAbstractprob} as follows:
\begin{align}\label{XFJ}
0 &= r_0 D_XJ(\widehat{X},\widehat{\alpha})(\cdot)+\sum_{i\in I_1}\int_{[0,T]}D_X\Phi_i(\widehat{X},\widehat{\alpha})(\cdot)(t)\mu^i(\d t)\nonumber\\
&\quad+\sum_{j\in I_2}\E\left[\int_0^TD_X\Psi_j(\widehat{X},\widehat{\alpha})(\cdot)(t)\eta^j_t\d t\right]-\left\langle \lambda, D_XF(\widehat{X},\widehat{\alpha})(\cdot)\right\rangle_{\M^2}.
\end{align}
We claim that $r_0$, $\mu^i$ for $i\in I_1$ and $\eta^j$ for $j\in I_2$ can not be zeros simultaneously,  since otherwise, the equality \equref{XFJ} yields that, for all $L\in\M^2$ with the nonzero multiplier $\lambda\in\M^2$,  
\begin{align*}
\left\langle \lambda, D_XF(\widehat{X},\widehat{\alpha})(L)\right\rangle_{\M^2}=0.
\end{align*}
Then, it follows from \lemref{bijective} that $\lambda=0$, which creates a contradiction.	

Now, we can make the scaling on both sides of the equality \equref{XFJ} by multiplying a positive constant $r>0$ such that $r(|r_0|+\sum_{i\in I_1}\|\mu^i\|_{\rm TV}+\sum_{j\in I_2}\|\eta^j\|_{\Lb^{2}_{\Fb,\R}})=1$. To ease the notation, we still denote by $r_0$, $\mu^i$ for $i\in I_1$, $\eta^j$ for $j\in I_2$ and $\lambda$ the scaled multipliers respectively. That is to say
\begin{align*}
|r_0|+\sum_{i\in I_1}\|\mu^i\|_{\rm TV}+\sum_{j\in I_2}\|\eta^j\|_{\Lb^{2}_{\Fb,\R}}=1.
\end{align*}

For any $L\in\M^2$, by inserting the corresponding Fr\^echet derivatives given in \lemref{Frechet_dif} into \equref{XFJ}, and let the It\^o's representation of $(\lambda,L)$ be given by  $(\lambda_t,L_t)=(\lambda_0,L_0)+\int_0^t(\lambda_s^1,L_s^1)\d s+\int_0^t(\lambda_s^2,L_s^2)\d W_s$ with $t\in[0,T]$ since $\lambda,L\in\M^2$, we arrive at
{\small
\begin{align}\label{Xoptimality}
	0&=r_0\E\left[\int_0^T\langle \nabla_xf(t,\widehat{X}_t,\widehat{\rho}_t,\widehat{\alpha}_t),L_t\rangle\d t+\langle \nabla_xg(\widehat{X}_T,\widehat{\mu}_T),L_T\rangle\right.\nonumber\\
	&\quad+\left.\E'\left[\int_0^T\langle\pa_{\mu}f(t,\widehat{X}_t,\widehat{\rho}_t,\widehat{\alpha}_t)(\widehat{X}_t',\widehat{\alpha}_t'),L_t'\rangle\d t+\langle\pa_{\mu}g(\widehat{X}_T,\widehat{\mu}_T)(\widehat{X}_T'),L_T'\rangle\right]\right]\nonumber\\
	&\quad-\sum_{i\in I_1}\int_{[0,T]}\E\left[\left\langle\nabla_x\phi^i(t,\widehat{X}_t,\widehat{\mu}_t,\widetilde{\widehat{\alpha}}^t),L_t\right\rangle+\left\langle\E'\left[\pa_{\mu}\phi^i(t,\widehat{X}_t,\widehat{\mu}_t,\widetilde{\widehat{\alpha}}^t)(\widehat{X}_t')\right],L_t'\right\rangle\right]\mu^i(\d t)\nonumber\\
	&\quad-\sum_{j\in I_2}\E\left[\int_0^T\left\langle\nabla_x\psi^j(t,\widehat{X}_t,\widehat{\rho}_t,\widehat{\alpha}_t),L_t\right\rangle+\left\langle\E'\left[\pa_{\mu}\psi^j(t,\widehat{X}_t,\widehat{\rho}_t,\widehat{\alpha}_t)(\widehat{X}_t',\widehat{\alpha}_t')\right],L_t'\right\rangle\eta^j_t\d t\right]\\
	&\quad-\E\left[\langle\lambda_0,L_0\rangle+\int_0^T\langle\lambda_t^1,L_t^1-\nabla_xb(t,\widehat{X}_t,\widehat{\rho}_t,\widehat{\alpha}_t)L_t\rangle\d t+\int_0^T\tr\left(\lambda_t^2(L_t^2-\nabla_x\sigma(t,\widehat{X}_t,\widehat{\rho}_t,\widehat{\alpha}_t)L_t)^{\T}\right)\d t \right.\nonumber\\
	&\quad+\left.\int_0^{T}\left\langle\lambda_t^1,\E'\left[\pa_{\mu}b(t,\widehat{X}_t,\widehat{\rho}_t,\widehat{\alpha}_t)(\widehat{X}_t',\widehat{\alpha}_t')L_t'\right]\right\rangle\d t+\int_0^{T}\tr\left(\lambda_t^2\left(\E'\left[\pa_{\mu}\sigma(t,\widehat{X}_t,\widehat{\rho}_t,\widehat{\alpha}_t)(\widehat{X}_t',\widehat{\alpha}_t')L_t'\right]\right)^{\T}\right)\d t\right]\nonumber
	\end{align}}with  $\widehat{\rho}_t=\Law(\widehat{X}_t,\widehat{\alpha}_t)$ and $\widehat{\mu}_t=\Law(\widehat{X}_t)$ for $t\in[0,T]$ .
	We then define an element $p=(p_t)_{t\in[0,T]}$ in $\M^2$ as follows, for $t\in[0,T]$,		
	\begin{align}\label{adjointp}
p_t&=r_0\nabla_xg(\widehat{X}_T,\widehat{\mu}_T)+r_0\E'\left[\pa_{\mu}g(\widehat{X}_T',\widehat{\mu}_T)(\widehat{X}_T)\right]\nonumber\\
&\quad-\sum_{i\in I_1}\int_{[t,T]}\left(\nabla_x\phi^i(s,\widehat{X}_s,\widehat{\mu}_s,\widetilde{\widehat{\alpha}}^{s})+\E'\left[\pa_{\mu}\phi^i(s,\widehat{X}_s',\widehat{\mu}_s,\widetilde{\widehat{\alpha}}^{s'})(\widehat{X}_s)\right]\right)\mu^i(\d s)\nonumber\\
&\quad-\sum_{j\in I_2}\int_t^T\left(\nabla_x\psi^j(s,\widehat{X}_s,\widehat{\rho}_s,\widehat{\alpha}_s)\eta_s^j+\E'\left[\pa_{\mu}\psi^j(s,\widehat{X}_s',\widehat{\rho}_s,\widehat{\alpha}_s')(\widehat{X}_s,\widehat{\alpha}_s){\eta^j_s}'\right]\right)\d s\nonumber\\
&\quad+\int_t^T\left[r_0\nabla_xf(s,\widehat{X}_s,\widehat{\rho}_s,\widehat{\alpha}_s)+\nabla_xb(s,\widehat{X}_s,\widehat{\rho}_s,\widehat{\alpha}_s)\lambda_s^1+\nabla_x\sigma(s,\widehat{X}_s,\widehat{\rho}_s,\widehat{\alpha}_s)\lambda_s^2 \right]\d s\nonumber\\
&\quad+\int_t^T\E'\left[r_0\pa_{\mu}f(s,\widehat{X}_s',\widehat{\rho}_s,\widehat{\alpha}_s')(\widehat{X}_s,\widehat{\alpha}_s)+\pa_{\mu}b(s,\widehat{X}_s',\widehat{\rho}_s,\widehat{\alpha}_s')(\widehat{X}_s,\widehat{\alpha}_s){\lambda_s^1}'\right.\nonumber\\
&\qquad\qquad+\left.\pa_{\mu}\sigma(s,\widehat{X}_s',\widehat{\rho}_s,\widehat{\alpha}_s')(\widehat{X}_s,\widehat{\alpha}_s){\lambda_s^2}' \right]\d s-\int_t^T\lambda_s^2\d W_s,
\end{align}
where $MN:=\sum_{i=1}^rM_iL_i\in \R^n$ for any $M=(M_1,\ldots,M_r)\in\R^{n\times n\times r}$ and $L=(L_1,\ldots,L_r)\in\R^{n\times r}$.
Here, the integral $\int_{[t,T]}\nabla_x\phi^i(s,\widehat{X}_s,\widehat{\mu}_s,\widetilde{\widehat{\alpha}}^s)\mu^i(\d s)$ is well-defined because $\nabla_x\phi^i(s,\widehat{X}_s,\widehat{\mu}_s,\widetilde{\widehat{\alpha}}^s)$ is Borel measurable w.r.t. time $s$ and $\int_{[t,T]}\nabla_x\phi^i(s,\widehat{X}_s,\widehat{\mu}_s,\widetilde{\widehat{\alpha}}^s)\mu^i(\d s)$ is finite $\Pb$-$\as$, which results from $\E[\int_{[t,T]}|\nabla_x\phi^i(s,\widehat{X}_s,\widehat{\mu}_s,\widetilde{\widehat{\alpha}}^s)|\mu^i(\d s)]<\infty$ for $i\in I_1$.
In addition,  $\E[\int_0^T\nabla_x\psi^j(t,\widehat{X}_t,\widehat{\rho}_t,\widehat{\alpha}_t)\eta_t^j\d t]<\infty$, and hence $\int_t^T\nabla_x\psi^j(s,\widehat{X}_s,\widehat{\rho}_s,\widehat{\alpha}_s)\eta_s^j\d s$ is $\d t\times\d\Pb$-$\as$ well-defined.
Applying It\^{o}'s formula, we can derive that
\begin{align*}
&\E\left[\int_0^T\langle\lambda_t^1, L_t^1\rangle\d t\right]=\E\left[\int_0^T\langle p_t,\d L_t\rangle+\int_0^T\langle \lambda_t^1-p_t,\d L_t\rangle\right]\\
&=\E\left[\int_0^T\langle \lambda_t^1-p_t,\d L_t\rangle+\langle p_T,L_T\rangle-\langle p_0,L_0\rangle-\int_0^T\langle\d p_t,L_t\rangle-\langle p,L\rangle_T\right]\\
&=\E\left[\int_0^T\langle\lambda_t^1-p_t,\d L_t\rangle\right]+\E\left[\left\langle r_0\nabla_xg(\widehat{X}_T,\widehat{\mu}_T),L_T\right\rangle+ \left\langle r_0\E'\left[\pa_{\mu}g(\widehat{X}_T',\widehat{\mu}_T)(\widehat{X}_T)\right],L_T\right\rangle\right]\\
&\quad+\E\left[\int_0^T\left\langle r_0\nabla_xf(t,\widehat{X}_t,\widehat{\rho}_t,\widehat{\alpha}_t),L_t\right\rangle\d t+\int_0^T\left\langle r_0\E'\left[\pa_{\mu}f(t,\widehat{X}_t',\widehat{\rho}_t,\widehat{\alpha}_t')(\widehat{X}_t,\widehat{\alpha}_t)\right],L_t\right\rangle\d t \right]\\
&\quad+\E\left[\int_0^T\left(\langle\lambda_t^1,\nabla_xb(t,\widehat{X}_t,\widehat{\rho}_t,\widehat{\alpha}_t)L_t\rangle+\tr\left(\lambda_t^2\left(\nabla_x\sigma(t,\widehat{X}_t,\widehat{\rho}_t,\widehat{\alpha}_t)L_t-L_t^2\right)^{\T}\right)\right)\right]\\
&\quad+\E\left[\int_0^{T}\left\langle\lambda_t^1,\E'\left[\pa_{\mu}b(t,\widehat{X}_t',\widehat{\rho}_t,\widehat{\alpha}_t')(\widehat{X}_t,\widehat{\alpha}_t)L_t\right]\right\rangle\d t-\langle p_0,L_0\rangle\right].\\
&\quad+\E\left[\int_0^{T}\tr\left(\lambda_t^2\left(\E'\left[\pa_{\mu}\sigma(t,\widehat{X}_t',\widehat{\rho}_t,\widehat{\alpha}_t')(\widehat{X}_t,\widehat{\alpha}_t)L_t\right]\right)^{\T}\right)\d t\right]\\
&\quad-\sum_{i\in I_1}\int_{[0,T]}\E\left[\left\langle\nabla_x\phi^i(t,\widehat{X}_t,\widehat{\mu}_t,\widetilde{\widehat{\alpha}}^t)+\E'\left[\pa_{\mu}\phi^i(t,\widehat{X}_t',\widehat{\mu}_t,\widehat{\widetilde{\alpha}}^{t'})(\widehat{X}_t)\right],L_t\right\rangle\right]\mu^i(\d t)\\
&\quad-\sum_{j\in I_2}\int_0^T\E\left[\left\langle\nabla_x\psi^j(t,\widehat{X}_t,\widehat{\rho}_t,\widehat{\alpha}_t)\eta_t^j+\E'\left[\pa_{\mu}\psi^j(t,\widehat{X}_t',\widehat{\rho}_t,\widehat{\alpha}_t')(\widehat{X}_t,\widehat{\alpha}_t){\eta^j_s}'\right],L_t\right\rangle\right]\d t\\
&=\E\left[\int_0^T\langle\lambda_t^1-p_t,\d L_t\rangle\right]+\E\left[\left\langle r_0\nabla_xg(\widehat{X}_T,\widehat{\mu}_T),L_T\right\rangle+ \E'\left[\left\langle r_0\pa_{\mu}g(\widehat{X}_T,\widehat{\mu}_T)(\widehat{X}_T'),L_T'\right\rangle\right]\right]\\
&\quad+\E\left[\int_0^T\left\langle r_0\nabla_xf(t,\widehat{X}_t,\widehat{\rho}_t,\widehat{\alpha}_t),L_t\right\rangle\d t+\int_0^T\left\langle r_0\E'\left[\pa_{\mu}f(t,\widehat{X}_t,\widehat{\rho}_t,\widehat{\alpha}_t)(\widehat{X}_t',\widehat{\alpha}_t')\right],L_t'\right\rangle\d t \right]\\
&\quad+\E\left[\int_0^T\left(\langle\lambda_t^1,\nabla_xb(t,\widehat{X}_t,\widehat{\rho}_t,\widehat{\alpha}_t)L_t\rangle+\tr\left(\lambda_t^2\left(\nabla_x\sigma(t,\widehat{X}_t,\widehat{\rho}_t,\widehat{\alpha}_t)L_t-L_t^2\right)^{\T}\right)\right)\right]\\
&\quad+\E\left[\int_0^{T}\left\langle\lambda_t^1,\E'\left[\pa_{\mu}b(t,\widehat{X}_t,\widehat{\rho}_t,\widehat{\alpha}_t)(\widehat{X}_t',\widehat{\alpha}_t')L_t'\right]\right\rangle\d t-\langle p_0,L_0\rangle\right].\\
&\quad+\E\left[\int_0^{T}\tr\left(\lambda_t^2\left(\E'\left[\pa_{\mu}\sigma(t,\widehat{X}_t,\widehat{\rho}_t,\widehat{\alpha}_t)(\widehat{X}_t',\widehat{\alpha}_t')L_t'\right]\right)^{\T}\right)\d t\right]\\
&\quad-\sum_{i\in I_1}\int_{[0,T]}\E\left[\left\langle\nabla_x\phi^i(t,\widehat{X}_t,\widehat{\mu}_t,\widetilde{\widehat{\alpha}}^t),L_t\right\rangle+\left\langle\E'\left[\pa_{\mu}\phi^i(t,\widehat{X}_t,\widehat{\mu}_t,\widehat{\widetilde{\alpha}}^t)(\widehat{X}_t')\right],L_t'\right\rangle\right]\mu^i(\d t)\\
&\quad-\sum_{j\in I_2}\int_0^T\E\left[\left\langle\nabla_x\psi^j(t,\widehat{X}_t,\widehat{\rho}_t,\widehat{\alpha}_t),L_t\right\rangle+\left\langle\E'\left[\pa_{\mu}\psi^j(t,\widehat{X}_t,\widehat{\rho}_t,\widehat{\alpha}_t)(\widehat{X}_t',\widehat{\alpha}_t')\right],L_t'\right\rangle\right]\eta_t^j\d t,
\end{align*}
where $\langle p,L\rangle_T$ stands for the quadratic variation of $p$ and $L$, and we have applied the Fubini theorem. Plugging the last equality back into \equref{Xoptimality}, we can conclude that
\begin{align*}
\E\left[\langle p_0-\lambda_0, L_0\rangle+\int_0^T\langle p_t-\lambda_t^1,\d L_t\rangle \right]=0,~~\forall L\in\M^2.   
\end{align*}
By the arbitrariness of the element $L\in\M^2$ and the continuity of $t\mapsto p_t$, we can derive that $p_0=\lambda_0$, $\Pb$-$\as$, and $p$ is indistinguishable from $\lambda^1=(\lambda_t^1)_{t\in[0,T]}$. Together with \equref{adjointp}, this results in that, for $t\in[0,T]$,
\begin{align}\label{adjoint}
\lambda_t^1&=r_0\nabla_xg(\widehat{X}_T,\widehat{\mu}_T)+r_0\E'\left[\pa_{\mu}g(\widehat{X}_T',\widehat{\mu}_T)(\widehat{X}_T)\right]\nonumber\\
&\quad-\sum_{i\in I_1}\int_{[t,T]}\left(\nabla_x\phi^i(s,\widehat{X}_s,\widehat{\mu}_s,\widetilde{\widehat{\alpha}}^s)+\E'\left[\pa_{\mu}\phi^i(s,\widehat{X}_s',\widehat{\mu}_s,\widetilde{\widehat{\alpha}}^{s'})(\widehat{X}_s)\right]\right)\mu^i(\d s)\nonumber\\
&\quad-\sum_{j\in I_2}\int_t^T\left(\nabla_x\psi^j(s,\widehat{X}_s,\widehat{\rho}_s,\widehat{\alpha}_s)\eta_s^j+\E'\left[\pa_{\mu}\psi^j(s,\widehat{X}_s',\widehat{\rho}_s,\widehat{\alpha}_s')(\widehat{X}_s,\widehat{\alpha}_s){\eta_s^j}'\right]\right)\d s\nonumber\\
&\quad+\int_t^T\left[r_0\nabla_xf(s,\widehat{X}_s,\widehat{\rho}_s,\widehat{\alpha}_s)+\nabla_xb(s,\widehat{X}_s,\widehat{\rho}_s,\widehat{\alpha}_s)\lambda_s^1+\nabla_x\sigma(s,\widehat{X}_s,\widehat{\rho}_s,\widehat{\alpha}_s)\lambda_s^2 \right]\d s\nonumber\\
&\quad+\int_t^T\E'\left[r_0\pa_{\mu}f(s,\widehat{X}_s',\widehat{\rho}_s,\widehat{\alpha}_s')(\widehat{X}_s,\widehat{\alpha}_s)+\pa_{\mu}b(s,\widehat{X}_s',\widehat{\rho}_s,\widehat{\alpha}_s')(\widehat{X}_s,\widehat{\alpha}_s){\lambda_s^1}'\right.\nonumber\\
&\qquad\qquad\quad+\left.\pa_{\mu}\sigma(s,\widehat{X}_s',\widehat{\rho}_s,\widehat{\alpha}_s')(\widehat{X}_s,\widehat{\alpha}_s){\lambda_s^2}' \right]\d s-\int_t^T\lambda_s^2\d W_s.
\end{align}
Here, we recall that $(\widehat{X}',\widehat{\alpha}',\lambda^{1'},\lambda^{2'})$ is  an independent copy of $(\widehat{X},\widehat{\alpha},\lambda^{1},\lambda^{2})$ defined on the probability space $(\Omega',\F',P')$ constructed following \lemref{Lpartialdif}.

Now, if we set $Q:\M^2\mapsto\M^2$ that maps any $\eta\in\M^2$ admitting the It\^o's representation $\eta_t=\eta_0+\int_0^t\eta_s^1\d s+\int_0^t\eta_s^2\d W_s$ with $t\in[0,T]$ to $Q(\eta)(\cdot):=\eta_0+\int_0^{\cdot}\eta_s\d s+\int_0^{\cdot}\eta_s^2\d W_s$ for $t\in[0,T]$. Then, we can deduce that $Q(\lambda^1)$ is indistinguishable from $\lambda$. Moreover, combining the Fr\^echet derivative provided in \lemref{Frechet_dif} together with \equref{optimal_condition} and the fifth equality in \eqref{complementary_slackness1} on the FJ condition in \lemref{lem:FJCondAbstractprob}, we get that, for any $\alpha\in \U[0,T]$,
{\small \begin{align}\label{alphaopt}
	0&\leq\E\left[\int_0^Tr_0\left\langle \nabla_uf(t,\widehat{X}_t,\widehat{\rho}_t,\widehat{\alpha}_t)+\E'\left[\pa_{\nu}f(t,\widehat{X}_t',\widehat{\rho}_t,\widehat{\alpha}_t')(\widehat{X}_t,\widehat{\alpha}_t)\right] ,\alpha_t-\widehat{\alpha}_t\right\rangle\d t\right.\nonumber\\
	&\quad+\int_0^T\left\langle \nabla_u b(t,\widehat{X}_t,\widehat{\rho}_t,\widehat{\alpha}_t)^{\T}\lambda_t^1+\nabla_u\sigma(t,\widehat{X}_t,\widehat{\rho}_t,\widehat{\alpha}_t)^{\T}\lambda_t^2,\alpha_t-\widehat{\alpha}_t\right\rangle\d t\nonumber\\
	&\quad+\int_0^T\left\langle \E'\left[\pa_{\nu} b(t,\widehat{X}_t',\widehat{\rho}_t,\widehat{\alpha}_t')(\widehat{X}_t,\widehat{\alpha}_t)^{\T}{\lambda_t^1}'+\pa_{\nu}\sigma(t,\widehat{X}_t',\widehat{\rho}_t,\widehat{\alpha}_t')(\widehat{X}_t,\widehat{\alpha}_t)^{\T}{\lambda_t^2}'\right],\alpha_t-\widehat{\alpha}_t\right\rangle\d t\nonumber\\
	&\quad-\sum_{i\in I_1}\int_{[0,T]}\int_0^t\left\langle D_{\gamma}\phi^i(t,\widehat{X}_t,\widehat{\mu}_t,\widetilde{\widehat{\alpha}}^t)_s, \alpha_s-\widehat{\alpha}_s\right\rangle\d s\mu^i(\d t)\nonumber\\
	&\quad-\left.\sum_{j\in I_2}\int_0^T\left\langle\left(\nabla_u\psi^j(t,\widehat{X}_t,\widehat{\rho}_t,\widehat{\alpha}_t)\eta_t^j+\E'\left[\pa_{\nu}\psi^j(t,\widehat{X}_t',\widehat{\rho}_t,\widehat{\alpha}_t')(\widehat{X}_t,\widehat{\alpha}_t){\eta_t^j}'\right]\right),\alpha_t-\widehat{\alpha}_t\right\rangle\d t\right]\nonumber\\
	&=\E\left[\int_0^Tr_0\left\langle \nabla_uf(t,\widehat{X}_t,\widehat{\rho}_t,\widehat{\alpha}_t)+\E'\left[\pa_{\mu}f(t,\widehat{X}_t',\widehat{\rho}_t,\widehat{\alpha}_t')(\widehat{X}_t,\widehat{\alpha}_t)\right] ,\alpha_t-\widehat{\alpha}_t\right\rangle\d t\right.\nonumber\\
	&\quad+\int_0^T\left\langle \nabla_u b(t,\widehat{X}_t,\widehat{\rho}_t,\widehat{\alpha}_t)^{\T}\lambda_t^1+\nabla_u\sigma(t,\widehat{X}_t,\widehat{\rho}_t,\widehat{\alpha}_t)^{\T}\lambda_t^2,\alpha_t-\widehat{\alpha}_t\right\rangle\d t\nonumber\\
	&\quad+\int_0^T\left\langle \E'\left[\pa_{\nu} b(t,\widehat{X}_t',\widehat{\rho}_t,\widehat{\alpha}_t')(\widehat{X}_t,\widehat{\alpha}_t)^{\T}{\lambda_t^1}'+\pa_{\nu}\sigma(t,\widehat{X}_t',\widehat{\rho}_t,\widehat{\alpha}_t')(\widehat{X}_t,\widehat{\alpha}_t)^{\T}{\lambda_t^2}'\right],\alpha_t-\widehat{\alpha}_t\right\rangle\d t\nonumber\\
	&\quad-\sum_{i\in I_1}\int_0^T\left\langle \int_{[t,T]}D_{\gamma}\phi^i(s,\widehat{X}_s,\widehat{\mu}_s,\widetilde{\widehat{\alpha}}^s)_t\mu^i(\d s), \alpha_s-\widehat{\alpha}_s\right\rangle\d t\nonumber\\
	&\quad-\left.\sum_{j\in I_2}\int_0^T\left\langle\left(\nabla_u\psi^j(t,\widehat{X}_t,\widehat{\rho}_t,\widehat{\alpha}_t)\eta_t^j+\E'\left[\pa_{\nu}\psi^j(t,\widehat{X}_t',\widehat{\rho}_t,\widehat{\alpha}_t')(\widehat{X}_t,\widehat{\alpha}_t){\eta_t^j}'\right]\right),\alpha_t-\widehat{\alpha}_t\right\rangle\d t\right],
	\end{align}}where we used the Fubini theorem and the fact that $\int_t^TD_{\gamma}\phi^i(s,\widehat{X}_s,\widehat{\mu}_s,\widetilde{\widehat{\alpha}}^s)_t\mu^i(\d s)$ can be viewed as a Bochner integral valued in $\R^l$. Here,  $D_{\gamma}\phi^i(s,\widehat{X}_s,\widetilde{\widehat{\alpha}}^s)_t$ is the evaluation of the continuous function $D_{\gamma}\phi^i(s,\widehat{X}_s,\widetilde{\widehat{\alpha}}^s)$ at time $t$. Then, the arbitrariness of $\alpha\in\U[0,T]$ in \equref{alphaopt} yields that
	\begin{align}\label{optimal_control}
&\left\langle \E'\left[r_0\pa_{\nu}f(t,\widehat{X}_t',\widehat{\rho}_t,\widehat{\alpha}_t')(\widehat{X}_t,\widehat{\alpha}_t)+\pa_{\nu} b(t,\widehat{X}_t',\widehat{\rho}_t,\widehat{\alpha}_t)(\widehat{X}_t,\widehat{\alpha}_t)^{\T}{\lambda_t^1}'+\pa_{\nu}\sigma(t,\widehat{X}_t',\widehat{\rho}_t,\widehat{\alpha}_t')(\widehat{X}_t,\widehat{\alpha}_t)^{\T}{\lambda_t^2}'\right]\right.\nonumber\\
&\qquad+\left.r_0\nabla_uf(t,\widehat{X}_t,\widehat{\rho}_t,\widehat{\alpha}_t)+\nabla_ub(t,\widehat{X}_t,\widehat{\rho}_t,\widehat{\alpha}_t)^{\T}\lambda_t^1+\nabla_u\sigma(t,\widehat{X}_t,\widehat{\rho}_t,\widehat{\alpha}_t)^{\T}\lambda_t^2\right.\nonumber\\
&\qquad-\left.\sum_{i\in I_1}\int_{[t,T]}D_{\gamma}\phi^i(s,\widehat{X}_s,\widehat{\mu}_s,\widetilde{\widehat{\alpha}}^s)_t\mu^i(\d s)-\sum_{j\in I_2}\nabla_u\psi^j(t,\widehat{X}_t,\widehat{\rho}_t,\widehat{\alpha}_t)\eta_t^j\right.\nonumber\\
&\qquad-\left.\sum_{j\in J_2}\E'\left[\pa_{\nu}\psi^j(t,\widehat{X}_t',\widehat{\rho}_t,\widehat{\alpha}_t')(\widehat{X}_t,\widehat{\alpha}_t){\eta_t^j}'\right], u-\widehat{\alpha}_t\right\rangle\geq 0,~ \forall u\in U,\quad\d t\times\d\Pb\mbox{-}\as
\end{align}
Using the representation \eqref{Hamiltonian} of the Hamiltonian $H^{r_0}:[0,T]\times\R^n\times\Pc_2(\R^n\times \R^l)\times\R^l\times\R^n\times\R^{n\times r}\mapsto\R$, we obtain the stochastic minimum condition \equref{SMP} from \equref{optimal_control} by noting that $\lambda^1=Y$ and $\lambda^2=Z$, where the process pair $(Y,Z)=(Y_t,Z_t)_{t\in[0,T]}$ satisfies the BSDE~\eqref{BSDE}.

On the other hand, the Radon measure $\mu^i$ is supported on $\{t\in[0,T];~\E[\phi^i(t,\widehat{X}_t,\widehat{\mu}_t,\widetilde{\widehat{\alpha}}^t)]=0\}$ for $i\in I_1$, which is a consequence of the first complementary slackness condition given in \eqref{complementary_slackness1}  of \lemref{lem:FJCondAbstractprob}. Moreover, $\eta_t^j$ vanishes on the set $\{t\in[0,T];~\psi^j(t,\widehat{X}_t(\omega),\widehat{\rho}_t,\widehat{\alpha}_t(\omega))\neq0\}$, $\d t$-$\as$, which results from the second complementary slackness condition  in \eqref{complementary_slackness1}  of \lemref{lem:FJCondAbstractprob}. In addition, we have that $r_0+\sum_{i\in I_1}\mu^i([0,T])+\sum_{j\in I_2}\|\eta^j\|_{\Lb^{2}_{\Fb,\R}}=r_0+\sum_{i\in I_1}\|\mu^i\|_{\rm TV}+\sum_{j\in I_2}\|\eta^j\|_{\Lb^{2}_{\Fb,\R}}=1$ as $\mu^i\in\Mc^+[0,T]$ for $i\in I_1$. Finally, the uniqueness of solution to BSDE \equref{BSDE} is a result of the classical BSDE theory (c.f. Pham \cite{Pham}). 
\end{proof}

\begin{remark}
According to the proof of \thmref{SMP_con}, the adjoint process $(Y,Z)=(Y_t,Z_t)_{t\in [0,T]}$ in the constrained BSDE \eqref{BSDE} can be interpreted as the components of the Lagrange multiplier with respect to the McKean-Vlasov SDE {\rm(}equality{\rm)} constraint. 
\end{remark}

{
By applying \propref{fj_sufficient} to \equref{cost_func}, we can also derive sufficient optimality conditions for the C-MFC problem \equref{cost_func}, which can guarantee the optimality of any admissible pair satisfying the necessary conditions in \thmref{SMP_con} with $r_0>0$:
\begin{prop}[Sufficient SMP for C-MFC problems]\label{smp_sufficient}
We impose the condition on the coefficient $(b,\sigma)$ in the following sense, for any $\lambda_1,\lambda_2\in\R$, $(x_1,u_1),(x_2,u_2)\in\R^n\times U$ and $(X_1,\alpha_1),(X_2,\alpha_2)\in L^2((\tilde\Omega,\tilde\F,\tilde\Pb);\R^n\times\R^l)$, 
it holds that
\begin{align*}
&(b,\sigma)(t,\lambda_1x_1+\lambda_2x_2,\Law(\lambda_1X_1+\lambda_2X_2,\lambda_1\alpha_1+\lambda_2\alpha_2),\lambda_1u_1+\lambda_2u_2)\\
&\qquad=\lambda_1(b,\sigma)(t,x_1,\Law(X_1,\alpha_1),u_1)
+\lambda_2(b,\sigma)(t,x_2,\Law(X_2,\alpha_2),u_2).
\end{align*}
Assume further that $(f,g,(-\phi^i)_{i\in I},(-\psi^j)_{j\in J})$ are convex in $(x,u)\in\R^n\times U$, and are $L$-convex (c.f. Definition 3.8 in \cite{BWWX24}) in $\rho\in\Pc_2(\R^n\times\R^l)$. Then, any admissible pair $(\widehat{X},\widehat{\alpha})\in C:=\M^2\times\U[0,T]$ satisfying the necessary conditions in \thmref{SMP_con} with $r_0>0$ is optimal for the C-MFC problem \eqref{cost_func}.
\end{prop}

We stress that, when $\phi^i=0$ and $\psi^j=0$ for $(i,j)\in I\times J$, the above sufficient condition reduces to the sufficient condition of the classical SMP. Indeed, under assumptions in Proposition \ref{smp_sufficient}, the Hamiltonian is convex in $(x,u)\in\R^n\times U$, and is $L$-convex in the measure argument $\rho\in\Pc_2(\R^n\times\R^l)$, which coincides with the standard convexity condition that is usually invoked in the sufficient condition for the classical SMP (c.f.  Theorem 3.5 in Acciaio et al. \cite{Carmona1}).
}

\section{Constrained Stochastic Control and MFG Problems}\label{sec:constrainedSCP-MFG}

\subsection{Constrained stochastic control problem}\label{sec:CSCP0}

In this subsection, we consider the classical constrained stochastic control problem as a special case of MFC problem \eqref{cost_func} given by
\begin{align}\label{cost_func-withoutlaw}
\begin{cases}
\text{minimize $J(\alpha)$ over $\alpha\in\U[0,T]$;}\\[0.4em]
\displaystyle \text{subject to $X_t^{\alpha}=\kappa+\int_0^tb(s,X_s^{\alpha},\alpha_s)\d s+\int_0^t\sigma(s,X_s^{\alpha},\alpha_s)\d W_s$,}\\[0.8em]
\displaystyle ~~~~~~~~~~~~~~~\text{$\E\left[\phi^i\left(t,X_t^{\alpha},\widetilde{\alpha}^t\right)\right]\geq 0$,~~$i\in I_1$,~~$\forall t\in[0,T]$,}\\[0.6em]
\displaystyle ~~~~~~~~~~~~~~~\text{$\psi^j\left(t,X_t^{\alpha},\alpha_t\right)\geq 0$,~~$j\in I_2$,~~$\d t\times\d\Pb$-a.s..}
\end{cases}
\end{align}

Then, we have from \thmref{SMP_con} that

\begin{corollary}[SMP for constrained stochastic control problem]\label{coro:SMP_con} Let $(\widehat{X},\widehat{\alpha})\in C:=\M^2\times\U[0,T]$ be an optimal solution to the constrained stochastic control problem \eqref{cost_func-withoutlaw}. Then, there exist some $r_0\geq 0$, $\mu^i\in\Mc^+[0,T]$ for $i\in I_1$ and $\eta^j\in \Lb^2_{\Fb,\R+}$ for $j\in I_2$ such that 
\begin{itemize}
\item [{\rm(i)}] the Radon measure $\mu^i$ is supported on $\{t\in[0,T];~\E[\phi^i(t,\widehat{X}_t,\widetilde{\widehat{\alpha}}^t)]=0\}$ for all $i\in I_1$,
\item [{\rm(ii)}] for $j\in I_2$, as $\eta_t^j\geq 0$, $\d t\times\d \Pb$-$\as$ and $\eta^j\in \Lb^2_{\Fb,\R+}$, we can define the {\rm(}continuous{\rm)} increasing process $A^j=(A_t^j)_{t\in[0,T]}$ by $A_t^j=\int_0^t\eta_s^j\d s$. Then, for $\Pb$-$\as~\omega$, the process $A^j(\omega)$ only increases on the set $\{t\in[0,T];~\psi^j(t,\widehat{X}_t(\omega),\widehat{\alpha}_t(\omega))=0\}$, $\d t$-$\as$,
\item [{\rm(iii)}] it holds that $\displaystyle r_0+\sum_{i\in I_1}\mu^i([0,T])+\sum_{j\in I_2}\|\eta^j\|_{\Lb^{2}_{\Fb,\R}}=1$.
\end{itemize}
Furthermore, the stochastic {\rm(}first-order{\rm)} minimum condition holds in the sense that, for any $u\in U$ and $\d t\times\d \Pb$-$\as$, 
{\small\begin{align}\label{SMP-withoutLaw}
	\left\langle \nabla_u H^{r_0}(t,\widehat{X}_t,\widehat{\alpha}_t,Y_t,Z_t)-\sum_{i\in I_1}\int_t^TD_{\gamma}\phi^i(s,\widehat{X}_s,\widetilde{\widehat{\alpha}}^s)_t\mu^i(\d s)-\sum_{j\in I_2}\nabla_u\psi^j(t,\widehat{X}_t,\widehat{\alpha}_t)\eta_t^j, u-\widehat{\alpha}_t\right\rangle\geq 0.
	\end{align}}
	Here, the Hamiltonian $H^{r_0}:[0,T]\times\R^n\times\R^l\times\R^n\times\R^{n\times r}\mapsto \R$ is defined  by
	\begin{align}\label{Hamiltonian-withoutLaw}
H^{r_0}(t,x,u,y,z):=\langle b(t,x,u),y\rangle+\tr\left(\sigma(t,x,u)z^{\T}\right)+r_0f(t,x,u),
\end{align}
and the process pair $(Y,Z)=(Y_t,Z_t)_{t\in[0,T]}$ taking values in $\R^n\times\R^{n\times r}$ is the unique solution to the BSDE:
\begin{equation}\label{BSDE-withoutLaw}
\begin{aligned}
	Y_t&=r_0\nabla_xg(\widehat{X}_T)+\int_t^T\nabla_x H^{r_0}(s,\widehat{X}_s,\widehat{\alpha}_s,Y_s,Z_s)\d s-\sum_{i\in I_1}\int_{[t,T]}\nabla_x\phi^i(s,\widehat{X}_s,\widehat{\mu}_s,\widetilde{\widehat{\alpha}}^s)\mu^i(\d s)\\
	&\quad-\sum_{j\in I_2}\int_t^T\nabla_x\psi^j(s,\widehat{X}_s,\widehat{\rho}_s,\widehat{\alpha}_s)\d A_s^j-\int_t^TZ_s\d W_s.
\end{aligned}
\end{equation}
\end{corollary}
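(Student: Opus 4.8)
The plan is to deduce \corref{coro:SMP_con} directly from \thmref{SMP_con} by viewing the constrained stochastic control problem \eqref{cost_func-withoutlaw} as the special instance of the C-(E)MFC problem \eqref{cost_func} in which the coefficients $b,\sigma,f$ and $g$ carry no dependence on the joint law $\Law(X_t^{\alpha},\alpha_t)$ (respectively the marginal $\Law(X_T^{\alpha})$). First I would verify that \assref{ass1} persists under this restriction: conditions (A1)--(A4) reduce verbatim to the corresponding regularity and growth hypotheses on $b,\sigma,f,g$ and on $\phi^i,\psi^j$ regarded as functions of $(t,x,u)$ (and of $\gamma\in\X$ for $\phi^i$), while the continuous L-differentiability demanded in (A5) holds trivially, since a functional that is constant in its measure argument is L-differentiable with vanishing L-derivative. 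Consequently the existence and uniqueness of the state process in $\M^2$ and of the adjoint pair $(Y,Z)$ are inherited with no extra work.

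The key observation is that, for coefficients independent of the law, every partial L-derivative vanishes identically:
\begin{align*}
\pa_{\mu}b=\pa_{\nu}b=\pa_{\mu}\sigma=\pa_{\nu}\sigma=\pa_{\mu}f=\pa_{\nu}f=\pa_{\mu}g\equiv 0.
\end{align*}
This is immediate from \defref{def:L-derivative}, because the lifted functional \eqref{eq:liftw} is then insensitive to perturbations of the underlying law, so its Fr\'echet derivative is zero. As a result, every term in the conclusion of \thmref{SMP_con} that is carried through the copy space $(\Omega',\F',P')$ by an expectation $\E'[\cdots]$ drops out. Applying \thmref{SMP_con} to this reduced problem, I would first record that the three multiplier conditions (i)--(iii) are untouched by the specialization and hence transcribe directly into \corref{coro:SMP_con}-(i)--(iii).

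It then remains only to collect the surviving terms. The Hamiltonian \eqref{Hamiltonian} loses its $\rho$-argument and collapses to \eqref{Hamiltonian-withoutLaw}; in the stochastic minimum condition \eqref{SMP} the contribution $\E'[\pa_{\nu}H^{r_0}(\cdots)(\widehat{X}_t,\widehat{\alpha}_t)]$ disappears, yielding exactly \eqref{SMP-withoutLaw}; and in the adjoint equation \eqref{BSDE} both the terminal term $r_0\E'[\pa_{\mu}g(\widehat{X}_T,\widehat{\mu}_T)(\widehat{X}_T')]$ and the driver term $\int_t^T\E'[\pa_{\mu}H^{r_0}(\cdots)]\d s$ vanish, with $\nabla_x g(\widehat{X}_T,\widehat{\mu}_T)$ simplifying to $\nabla_x g(\widehat{X}_T)$, leaving precisely \eqref{BSDE-withoutLaw}. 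Because the argument is a pure specialization of an already-established theorem, there is no substantive obstacle; the only point that genuinely warrants care is the justification that the partial L-derivatives vanish, and as noted above this is a one-line consequence of \defref{def:L-derivative} applied to a law-constant functional.
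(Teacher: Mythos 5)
Your proposal is correct and matches the paper's own treatment: the paper derives \corref{coro:SMP_con} precisely by specializing \thmref{SMP_con} to coefficients with no law dependence, so that all partial L-derivatives and the associated $\E'[\cdots]$ terms vanish and the Hamiltonian, minimum condition and adjoint BSDE collapse to \eqref{Hamiltonian-withoutLaw}, \eqref{SMP-withoutLaw} and \eqref{BSDE-withoutLaw}. Your added verification that \assref{ass1} (in particular the trivial L-differentiability in (A5)) persists under this restriction is a sound elaboration of what the paper leaves implicit.
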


Consider the stochastic control problem with the dynamic expectation constraint and the dynamic state constraint specified by, for $i\in I_1$ and $j\in I_2$,
\begin{align}\label{eq:constraintsexam0}
\E[\phi^i(t,X_t^{\alpha})]\geq0,~\forall t\in[0,T],~\text{and}~ \psi^j(t,X_t^{\alpha})\geq0,~~\d t\times\d\Pb\text{-}\as
\end{align}
Note that the constraint functions $\phi^i$ and $\psi^j$ in \eqref{eq:constraintsexam0} are independent of the control process. Then, the stochastic (first-order) minimum condition \eqref{SMP-withoutLaw} is reduced to that, $\d t\times\d \Pb$-$\as$, 
\begin{align}\label{SMP-withoutLaw-withoutstate}
\left\langle \nabla_u H^{r_0}(t,\widehat{X}_t,\widehat{\alpha}_t,Y_t,Z_t), u-\widehat{\alpha}_t\right\rangle\geq 0,\quad \forall u\in U.
\end{align}
Here, the process pair $(Y,Z)=(Y_t,Z_t)_{t\in[0,T]}$ taking values in $\R^n\times\R^{n\times r}$ is the unique solution to the following BSDE:
\begin{align}\label{BSDE-withoutLaw-withoutstate}
Y_t&=r_0\nabla_xg(\widehat{X}_T)+\int_t^T\nabla_x H^{r_0}(s,\widehat{X}_s,\widehat{\alpha}_s,Y_s,Z_s)\d s-\sum_{i\in I_1}\int_{[t,T]}\nabla_x\phi^i(s,\widehat{X}_s)\mu^i(\d s)\nonumber\\
&\quad-\sum_{j\in I_2}\int_t^T\nabla_x\psi^j(s,\widehat{X}_s)\d A_s^j-\int_t^TZ_s\d W_s.
\end{align}	
In particular,	Hu et al. \cite{Hu}  considered the optimal control of SDEs with the dynamic expectation constraints described by the 1st one in \eqref{eq:constraintsexam0} (i.e., $m=1$ and $\phi:=\phi^1$). In this case, the stochastic (first-order) minimum condition \eqref{SMP-withoutLaw-withoutstate} holds true. Correspondingly, the BSDE \eqref{BSDE-withoutLaw-withoutstate} turns to be
\begin{align*}
Y_t&=r_0\nabla_xg(\widehat{X}_T)+\int_t^T\nabla_x H^{r_0}(s,\widehat{X}_s,\widehat{\alpha}_s,Y_s,Z_s)\d s-\int_{[t,T]}\nabla_x\phi(s,\widehat{X}_s)\mu^i(\d s)-\int_t^TZ_s\d W_s,
\end{align*}
which coincides with the result given in Hu et al. \cite{Hu}.

We can also consider the special case when $m=1$, $m_1=0$ and $\psi(t,x)=\psi^1(t,x)=x-a_t$ for $(t,x)\in[0,T]\times\R$. Here, $t\mapsto a_t$ is a deterministic continuous real-valued mapping. Thus, the constrained stochastic control problem \equref{cost_func-withoutlaw} reduces to \begin{align}\label{nonnegative}
\begin{cases}
\text{minimize $J(\alpha)$ over $\alpha\in\U[0,T]$;}\\[0.4em]
\displaystyle \text{subject to $X_t^{\alpha}=\kappa+\int_0^tb(s,X_s^{\alpha},\alpha_s)\d s+\int_0^t\sigma(s,X_s^{\alpha},\alpha_s)\d W_s$,}\\[0.8em]
\displaystyle ~~~~~~~~~~~~~~~\text{$X_t^{\alpha}\geq a_t$,~~$\d t\times\d\Pb$-a.s.}
\end{cases}
\end{align}
As a result of  \corref{coro:SMP_con},  
the following stochastic (first-order) minimum condition holds, $\d t\times\d \Pb$-$\as$, $\left\langle \nabla_u H^{r_0}(t,\widehat{X}_t,\widehat{\alpha}_t,Y_t,Z_t), u-\widehat{\alpha}_t\right\rangle\geq 0$ for all $u\in U$, where the process pair $(Y,Z)=(Y_t,Z_t)_{t\in[0,T]}$ satisfies the constrained BSDE:
\begin{align}\label{reflectedBSDE}
\d Y_t=-\nabla_xH^{r_0}(t,\widehat X_t,\widehat\alpha_t,Y_t,Z_t)\d t+\d A_t+Z_t\d W_t,\quad Y_T=\nabla_xg(\widehat{X}_T),
\end{align}
where $A_t=\int_0^t\eta_s\d s$ is the adapted increasing process with $\eta=(\eta_t)_{t\in[0,T]}$ being the multiplier given in \corref{coro:SMP_con} such that $\E[\int_0^T(\widehat{X}_t-a_t)\d A_t]=0$. This also implies that,  $\Pb$-$\as$, $\int_0^T(\widehat{X}_t-a_t)\d A_t=0$ 
in view of $\widehat{X}_t-a_t\geq0$, $\d t\times\d\Pb$-a.s.. Therefore, we can see that Eq.~\equref{reflectedBSDE} resembles a reflected BSDE with the reflecting barrier $t\mapsto a_t$.

\subsection{Constrained (extended) mean field game}

In this section, we discuss the constrained MFG problems (C-MFG) with the dynamic expectation constraints and/or the dynamic state-control constraints. We regard the constrained MFG problem as a constrained stochastic control problem with the fixed point consistency condition. 

We formulate the C-MFG as follows: 
\begin{align}\label{cost_func-MFG}
\begin{cases}
\text{minimize $J(\alpha)$ over $\alpha\in\U[0,T]$;}\\[0.4em]
\displaystyle \text{subject to $X_t^{\alpha}=\kappa+\int_0^tb(s,X_s^{\alpha},\rho_s,\alpha_s)\d s+\int_0^t\sigma(s,X_s^{\alpha},\rho_s,\alpha_s)\d W_s$,}\\[0.8em]
\displaystyle ~~~~~~~~~~~~~~~\text{$\E\left[\phi^i\left(t,X_t^{\alpha},\widetilde{\alpha}^t\right)\right]\geq 0$,~~$i\in I_1$,~~$\forall t\in[0,T]$,}\\[0.6em]
\displaystyle ~~~~~~~~~~~~~~~\text{$\psi^j\left(t,X_t^{\alpha},\alpha_t\right)\geq 0$,~~$j\in I_2$,~~$\d t\times\d\Pb$-a.s.}\\[0.6em]
\displaystyle ~~~~~~~~~~~~~~~\text{$\rho_t=\Law(\widehat{X}_t,\widehat{\alpha}_t)$ for $t\in[0,T]$,}
\end{cases}
\end{align}
where the measure flow $\rho=(\rho_t)_{t\in[0,T]}$ takes values in $\Pc_2(\R^n\times\R^l)$. Here, $\widehat{\alpha}=(\widehat{\alpha}_t)_{t\in[0,T]}$ is an (admissible) optimal control and $\widehat{X}=(\widehat{X}_t)_{t\in[0,T]}$ is the resulting state process. The measure flow $\rho=(\rho_t)_{t\in[0,T]}$ satisfying  $\rho_t=\Law(\widehat{X}_t,\widehat{\alpha}_t)$ for $t\in[0,T]$ is called a mean-field equilibrium (MFE) of the C-MFG problem \eqref{cost_func-MFG}.

\begin{corollary}[SMP for C-MFG problem]\label{coro:SMP_MFG} Let $\rho_t$ be an MFE to the C-MFG problem \eqref{cost_func-MFG}. Then, there exist some $r_0\geq 0$, $\mu^i\in\Mc^+[0,T]$ for $i\in I_1$ and $\eta^j\in \Lb^2_{\Fb,\R+}$ for $j\in I_2$ such that 
\begin{itemize}
\item [{\rm(i)}] the radon measure $\mu^i$ is supported on $\{t\in[0,T];~\E[\phi^i(t,\widehat{X}_t,\widetilde{\widehat{\alpha}}^t)]=0\}$ for all $i\in I_1$,
\item [{\rm(ii)}] for $j\in I_2$, as $\eta_t^j\geq 0$, $\d t\times\d \Pb$-$\as$ and $\eta^j\in \Lb^2_{\Fb,\R+}$, we can define the {\rm(}continuous{\rm)} increasing process $A^j=(A_t^j)_{t\in[0,T]}$ by $A_t^j=\int_0^t\eta_s^j\d s$. Then, for $\Pb$-$\as~\omega$, the process $A^j(\omega)$ only increases on the set $\{t\in[0,T];~\psi^j(t,\widehat{X}_t(\omega),\widehat{\alpha}_t(\omega))=0\}$, $\d t$-$\as$,
\item [{\rm(iii)}] it holds that $\displaystyle r_0+\sum_{i\in I_1}\mu^i([0,T])+\sum_{j\in I_2}\|\eta^j\|_{\Lb^{2}_{\Fb,\R}}=1$.
\end{itemize}
Furthermore, the stochastic {\rm(}first-order{\rm)} minimum condition holds in the sense that, for any $u\in U$ and $\d t\times\d \Pb$-$\as$, 
{\small	\begin{align}\label{SMP-MFG}
	\left\langle \nabla_u H^{r_0}(t,\widehat{X}_t,\widehat{\alpha}_t,\rho_t,Y_t,Z_t)-\sum_{i\in I_1}\int_t^TD_{\gamma}\phi^i(s,\widehat{X}_s,\widetilde{\widehat{\alpha}}^s)_t\mu^i(\d s)-\sum_{j\in I_2}\nabla_u\psi^j(t,\widehat{X}_t,\widehat{\alpha}_t)\eta_t^j, u-\widehat{\alpha}_t\right\rangle\geq 0,
	\end{align}}where the Hamiltonian $H^{r_0}$ is given in \eqref{Hamiltonian}, 
	and the process pair $(Y,Z)=(Y_t,Z_t)_{t\in[0,T]}$, taking values in $\R^n\times\R^{n\times r}$, is the unique solution to the BSDE:
	\begin{equation}\label{BSDE-MFG}
\begin{aligned}
	Y_t&=r_0\nabla_xg(\widehat{X}_T)+\int_t^T\nabla_x H^{r_0}(s,\widehat{X}_s,\widehat{\alpha}_s,\rho_s,Y_s,Z_s)\d s-\sum_{i\in I_1}\int_{[t,T]}\nabla_x\phi^i(s,\widehat{X}_s,\widehat{\mu}_s,\widetilde{\widehat{\alpha}}^s)\mu^i(\d s)\\
	&\quad-\sum_{j\in I_2}\int_t^T\nabla_x\psi^j(s,\widehat{X}_s,\widehat{\rho}_s,\widehat{\alpha}_s)\d A_s^j-\int_t^TZ_s\d W_s.
\end{aligned}
\end{equation}
Finally, the consistency condition holds that $\rho_t=\Law(\widehat{X}_t,\widehat{\alpha}_t)$.
\end{corollary}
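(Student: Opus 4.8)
The plan is to exploit the structural remark preceding the statement: a constrained MFG is a constrained stochastic control problem in which the population measure flow is frozen as an exogenous datum, supplemented by the fixed-point consistency condition. Concretely, I would first fix the measure flow $\rho=(\rho_t)_{t\in[0,T]}$ appearing in \eqref{cost_func-MFG} and treat it as a given, deterministic (measure-valued) parameter rather than as a quantity functionally tied to $(\widehat X,\widehat\alpha)$. Defining the frozen coefficients $\bar b(t,x,u):=b(t,x,\rho_t,u)$, $\bar\sigma(t,x,u):=\sigma(t,x,\rho_t,u)$ and $\bar f(t,x,u):=f(t,x,\rho_t,u)$, the optimization over $\alpha\in\U[0,T]$ subject to the first three lines of \eqref{cost_func-MFG} becomes precisely a constrained stochastic control problem of the type \eqref{cost_func-withoutlaw}, to which \corref{coro:SMP_con} applies directly.

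The first step is then to verify that the frozen coefficients $(\bar b,\bar\sigma,\bar f)$ and the terminal cost inherit the regularity and growth hypotheses needed for \corref{coro:SMP_con}. Since $\rho$ is held fixed, differentiability in $(x,u)$ and the Lipschitz/linear-growth bounds in \assref{ass1} are preserved verbatim, once one checks that $\sup_{t\in[0,T]}M_2(\rho_t)<\infty$; this follows because, along the equilibrium, $\rho_t=\Law(\widehat X_t,\widehat\alpha_t)$ with $\widehat X\in\M^2$ and $\widehat\alpha\in\U[0,T]$, so $t\mapsto M_2(\rho_t)=\E[|\widehat X_t|^2+|\widehat\alpha_t|^2]$ is integrable (with $\sup_t\E[|\widehat X_t|^2]<\infty$), which suffices for all integrals over $[0,T]$ entering the SMP.

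The second step is to invoke \corref{coro:SMP_con} for this frozen problem. This yields multipliers $r_0\ge0$, $\mu^i\in\Mc^+[0,T]$ $(i\in I_1)$ and $\eta^j\in\Lb^2_{\Fb,\R+}$ $(j\in I_2)$ satisfying the support/slackness conditions (i)--(iii) and the stochastic minimum condition \eqref{SMP-withoutLaw}, together with the adjoint BSDE \eqref{BSDE-withoutLaw}, all written with the frozen coefficients. Rewriting $\bar b,\bar\sigma,\bar f$ back in terms of $b(\cdot,\rho_t,\cdot)$, $\sigma(\cdot,\rho_t,\cdot)$, $f(\cdot,\rho_t,\cdot)$ and recalling the Hamiltonian \eqref{Hamiltonian}, the minimum condition and the BSDE become exactly \eqref{SMP-MFG} and \eqref{BSDE-MFG}. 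The essential point, marking the contrast with the McKean--Vlasov (control) case of \thmref{SMP_con}, is that because $\rho$ is exogenous to the representative player, no differentiation passes through the measure: the $\E'[\pa_\nu H^{r_0}\cdots]$ term in \eqref{SMP} and the $\int_t^T\E'[\pa_\mu H^{r_0}\cdots]\,\d s$ and $r_0\E'[\pa_\mu g\cdots]$ terms in \eqref{BSDE} are simply absent, leaving the ``decoupled'' conditions \eqref{SMP-MFG}--\eqref{BSDE-MFG}.

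Finally, the consistency condition $\rho_t=\Law(\widehat X_t,\widehat\alpha_t)$ holds by the very definition of a mean field equilibrium, so it may simply be appended to close the system. I expect the only genuine obstacle to be conceptual rather than technical: one must argue carefully that freezing $\rho$ is the correct modelling of the game, namely that each player perceives the population flow as inert, so that the first-order condition is taken with $\rho$ held fixed. Once this is granted, the result is a direct specialization of \corref{coro:SMP_con}, and the verification of assumptions in the first step is routine.
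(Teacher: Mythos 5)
Your proposal is correct and is essentially the paper's own route: the paper states the corollary without a separate proof precisely because it interprets the C-(E)MFG as the constrained stochastic control problem of Section 4.1 with the measure flow $\rho$ frozen as an exogenous datum (so Corollary \ref{coro:SMP_con} applies with time-dependent coefficients $b(t,x,\rho_t,u)$, etc., and no $\pa_\mu$/$\pa_\nu$ terms appear), after which the consistency condition $\rho_t=\Law(\widehat{X}_t,\widehat{\alpha}_t)$ is appended by the definition of an MFE. Your verification that $\widehat{X}\in\M^2$ and $\widehat{\alpha}\in\U[0,T]$ give the integrability of $t\mapsto M_2(\rho_t)$ needed for the frozen coefficients to satisfy the standing assumptions is a worthwhile detail the paper leaves implicit.
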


\subsection{An illustrative example: constrained LQ-MFC problem}\label{sec:LQ-MFCprob}

In this subsection, we present an example of linear-quadratic MFC problem under a type of dynamic mixed state-control constraint in which the optimal control can be characterized. 

Consider the following one-dimensional controlled McKean-Vlasov SDE that, for $\alpha\in\U[0,T]$ with control space $U=\R$,
{\small\begin{align}\label{LQ}
\d X_t^{\alpha} &=\{b_1 X_t^{\alpha}+b_2\E[X_t^{\alpha}]+b_3\alpha_t+b_4\E[\alpha_t]\}\d t+\{\sigma_1 X_t^{\alpha}+\sigma_2\E[X_t^{\alpha}]+\sigma_3\alpha_t+\sigma_4\E[\alpha_t]\}\d W_t,
\end{align}} 
where $X_0=\kappa\in L^2((\Omega,\F_0,\Pb);\R)$ and $W=(W_t)_{t\in[0,T]}$ is a standard  one-dimensional Brownian motion on the probability space $(\Omega,\F,\Pb)$ with $\Fb=(\F_t)_{t\in[0,T]}$ being the natural filtration generated by $W$. Here, model parameters $b_i,\sigma_i\in\R$ for $i=1,2,3,4$. The aim of the agent is to minimize the following cost functional, for cost parameters $q,\ell\geq0$ and $v>0$,
\begin{align}\label{LQ_cost}
J(\alpha)=\frac{1}{2}\E\left[\int_0^T \{q(X_t^{\alpha})^2+v u_t^2\}\d t+\ell(X_T^{\alpha}-\E[X_T^{\alpha}])^2\right],
\end{align}
under dynamic state-law-control constraint described by, $\Pb$-a.s.
\begin{align}\label{LQ_con}
h_tX_t^{\alpha}-\alpha_t\geq 0,\quad \forall t\in[0,T]
\end{align}
with $t\mapsto h_t$ being a continuous function on $[0,T]$.

We next give a necessary optimality condition for constrained control problem \eqref{LQ}-\eqref{LQ_con} using the KKT condition that is established in \corref{CQcon}. To this end, let $(\widehat{X},\widehat{\alpha})\in C=\M^2\times\U[0,T]$ be an optimal pair. First, we verify that LICQ(($\widehat{X},\widehat{\alpha}$)) holds true. In fact, by using \lemref{Frechet_dif}, it is enough to show that the following mapping is surjective:
\begin{align*}
\left(D F(\widehat{X},\widehat{\alpha}),D\Psi(\widehat{X},\widehat{\alpha})\right):\M^2\times\Lb_{\Fb,\R}^2\to \M^2\times\Lb_{\Fb,\R}^2,
\end{align*}
where, for $(L,K)\in\M^2\times\Lb_{\Fb,\R}$,\begin{align*}
D\Psi(\widehat{X},\widehat{\alpha})(L,K)&=-h(\cdot)L_t+K_{\cdot},\\
DF(\widehat{X},\widehat{\alpha})(L,K)&=L_{\cdot}-\int_0^{\cdot}b_1L_s\d s-\int_0^{\cdot}\sigma_1L_s\d W_s-\int_0^{\cdot}b_2\E[L_s]\d s-\int_0^{\cdot}\sigma_2\E[L_s]\d W_s\\
&\quad-\int_0^{\cdot}b_3K_s\d s-\int_0^{\cdot}\sigma_3K_s\d W_s\nonumber-\int_0^{\cdot}b_4\E[K_s]\d s-\int_0^{\cdot}\sigma_4\E[K_s]\d W_s.
\end{align*}
However, for any $(L',K')\in\M^2\times\Lb_{\Fb,\R}^2$, we set $K_t=h_tL_t+K_t'$ for $t\in[0,T]$, and plug it into the above equality. Then, there exists some  $L\in \M^2$ guaranteed by \lemref{well-posedSDE} such that
\begin{align*}
\left(D\Psi(\widehat{X},\widehat{\alpha}),DF(\widehat{X},\widehat{\alpha})\right)(L,K)=(L',K').
\end{align*}
Thus, from \corref{CQcon} and \thmref{SMP_con}, it follows that the KKT condition holds. Therefore, there exists some $\eta\in\Lb_{\Fb,\R+}^2$ such that 
\begin{align}\label{minimal}
\eta_t\left(h_t\widehat{X}_t-\widehat{\alpha}_t\right)=0,\quad \d t\times\d\Pb\text{-}\as.
\end{align}
The constrained BSDE given in \thmref{SMP_con} is specified into
{\small\begin{align}\label{LQ_BSDE}
\d Y_t=-\{b_1Y_t+b_2\E[Y_t]+\sigma_1Z_t+\sigma_2\E[Z_t]+q\widehat{X}_t\}\d t+h_t\eta_t\d t+Z_t\d W_t,~Y_T=\ell\{\widehat{X}_T-\E[\widehat{X}_T]\}.
\end{align}}
Then, the stochastic minimal condition is given by, $\d t\times\d\Pb$-a.s.
\begin{align*}
\{b_3Y_t+b_4\E[Y_t]+\sigma_3Z_t+\sigma_4\E[Z_t]+v\widehat{\alpha}_t+\eta_t\}(u-\widehat{\alpha}_t)\geq 0,\quad \forall u\in U=\R,   
\end{align*}
which yields that the optimal control is characterized by  
\begin{align}\label{LQ_SMP}
\widehat{\alpha}_t=-v^{-1}\left\{b_3Y_t+b_4\E[Y_t]+\sigma_3Z_t+\sigma_4\E[Z_t]+\eta_t\right\},\quad \forall t\in[0,T].
\end{align}
{In view of Corollary~3.10 in \cite{BWY}, the optimal control of problem \eqref{LQ_cost} subjecting to \eqref{LQ} and \eqref{LQ_con} exists, and hence the constrained BSDE \eqref{LQ_BSDE} admits a solution.} We also note that, if one considers problem \eqref{LQ}-\eqref{LQ_cost} without constraint~\eqref{LQ_con}, the multiplier $\eta\in\Lb_{\Fb,\R+}^2$ becomes zero. Then, the condition~\eqref{minimal} trivially holds. Thus, the BSDE given in \thmref{SMP_con} becomes the classical one that
\begin{align}\label{LQ_BSDE_without_constraints}
\d Y_t=-\{b_1Y_t+b_2\E[Y_t]+\sigma_1Z_t+\sigma_2\E[Z_t]+q\widehat{X}_t\}\d t+Z_t\d W_t,~ Y_T=\ell\{\widehat{X}_T-\E[\widehat{X}_T]\}.
\end{align}
The stochastic minimal condition is given by, $\d t\times\d\Pb$-a.s.
\begin{align*}
\{b_3Y_t+b_4\E[Y_t]+\sigma_3Z_t+\sigma_4\E[Z_t]+v\widehat{\alpha}_t\}(u-\widehat{\alpha}_t)\geq 0,\quad \forall u\in U=\R.   
\end{align*}
This yields that the optimal control without constraint is   
\begin{align}\label{LQ_SMP_without_constraint}
\widehat{\alpha}_t=-v^{-1}\left\{b_3Y_t+b_4\E[Y_t]+\sigma_3Z_t+\sigma_4\E[Z_t]\right\},\quad \forall t\in[0,T],
\end{align}
which coincides with the result for the extended MFC problem in the LQ framework as studied in \cite{BWWX24} and an analytic solution can be derived by assuming that $Y_t$ takes the form $\beta_t X_t+\zeta_t\E[X_t]$ and solving the associated Riccati equations as in  \cite{BWWX24}.\\

\noindent\textbf{Acknowledgements} We sincerely thank the anonymous referee for helpful comments and suggestions, which improved the presentation of the paper. L. Bo and J. Wang are supported by National Natural Science Foundation of China (No. 12471451), Natural Science Basic Research Program of Shaanxi (No. 2023-JC-JQ-05), Shaanxi Fundamental Science Research Project for Mathematics and Physics (No. 23JSZ010). X. Yu is supported by the Hong Kong RGC General Research Fund (GRF) under grant no. 15211524 and grant no. 15214125.

\appendix
\section{Proofs of Auxiliary Results}\label{appendix}

\small
In this appendix, we collect proofs of some auxiliary results in the main body of the paper.

{\begin{proof}[Proof of Lemma \ref{interior}]
First,  let us assume that $y\in K_i^{\rm o}$ and there exists a $\xi\in K_i^+$ such that $\langle y,\xi\rangle_{\tY_i,\tY_i^*}=0$. As $y\in K_i^{\rm o}$, there exists a constant $\delta>0$ such that $B(y;\delta)\subset K_i^{\rm o}$. Then, for any $\epsilon\in (0,\delta)$ and any $v\in Y_i$, according to the definition of $K_i^+$, we conclude that $\langle y+\epsilon v,\xi\rangle_{\tY_i,\tY_i^*}\geq 0$, and hence $\langle v,\xi\rangle_{\tY_i,\tY_i^*}\geq 0$, which implies $\xi=0$.

Next, we argue by contradiction to show that the other side of the claim holds. Suppose that $\langle y,\xi\rangle_{\tY_i,\tY_i^*}>0$ for all nonzero $\xi\in K_i^+$ and $y\notin K_i^{\rm o}$, which implies $y\in \overline{K_i^c}$ with $K_i^c$ being the complement of $K_i$. Consequently, there exists a sequence $(y_k)_{k\in\mathbb{N}}\subset K_i^c$ such that $y_k\to y$ as $k\to \infty$. Then, in light of \lemref{dual}, there exists some $\xi_k\in K_i^+$ such that $\langle y_k,\xi_k\rangle_{\tY_i,\tY_i^*}<0$ for all $k\in\mathbb{N}$. We assume without loss of generality that $\|\xi_k\|_{\tY_i^*}=1$. Thus, by the separability, there exists a weak-$^*$ convergent subsequence of $(\xi_k)_{k\in\mathbb{N}}$ (still denoted by $(\xi_k)_{k\in\mathbb{N}}$) such that $\xi_k\to\xi\in \tY_i^*$ in the topology of weak-$^*$ convergence, and $\xi\in K_i^+$  results from the definition of $K_i^+$. It
then holds that
\begin{align*}
\left|\langle y_k,\xi_k\rangle_{\tY_i,\tY_i^*}-\langle y,\xi\rangle_{\tY_i,\tY_i^*}\right|&\leq\left|\langle y_k-y,\xi_k\rangle_{\tY_i,\tY_i^*}\right|+\left|\langle y,\xi_k-\xi\rangle_{\tY_i,\tY_i^*}\right|\leq\|y_k-y\|_{\tY_i}+\left|\langle y,\xi_k-\xi\rangle_{\tY_i,\tY_i^*}\right|.   
\end{align*}
Her, the second term converges to zero as $k\to\infty$ in lieu of the definition of weak-$^*$ convergence. Thus, $\langle y,\xi\rangle_{\tY_i,\tY_i^*}=\lim_{k\to\infty}\langle y_k,\xi_k\rangle_{\tY_i,\tY_i^*}\leq 0$, which contradicts $\langle y,\xi\rangle_{\tY_i,\tY_i^*}>0$, and the claim holds.
\end{proof}}

\begin{proof}[Proof of Corollary~\ref{CQcon}]
We first assume that MFCQ($\widehat{x}$) holds. If \corref{CQcon}-(iii) does not hold, then $r_0=0$, we can deduce that there exists a $\xi\in N_C(\widehat{x})$ such that\begin{equation}\label{contradiction}
-\xi=\sum_{i\in I}\lambda_i\circ Dg_i(\widehat{x})-\sum_{j\in J} \mu_j\circ Dh_j(\widehat{x}),
\end{equation} 
where $(\lambda_i)_{i\in I}$ and $(\mu_j)_{j\in J}$ can not be zero simultaneously thanks to \thmref{fj_condition}-(iv). By the definition of normal cone $N_C(\widehat{x})$, MFCQ($\widehat{x}$) and \lemref{interior}, the equality \equref{contradiction} can be written as
\begin{align*}
0\leq\langle c-\widehat{x},\xi\rangle=\sum_{i\in I}\langle Dg_i(\widehat{x})(c-\widehat{x}),\lambda_i\rangle_{\tY_i,\tY_i^*}-\sum_{j\in J}\langle Dh_j(\widehat{x})(c-\widehat{x}),\mu_j\rangle_{\tH_j}\leq 0.
\end{align*}
In above, the equality can only be attained when $\lambda_i=0$ for $i\in I$. Thus, the equality  \equref{contradiction} can be  translated into $\sum_{j\in J}\mu_j\circ Dh_j(\widehat{x})=\xi\in N_C(\widehat{x})$. Here, $\xi=(\xi_1,\xi_2)\in \tX_1^*\times \tX_2^*$ as the dual space of a product space is the product of their dual spaces. Hence, we have $\sum_{j\in J}\mu_j\circ D_{x_1}h_j(\widehat{x})=\xi_1$. Moreover, by the definition of normal cone, we can conclude that by choosing $c=(c_1,\widehat{x}_2)\in C\subset\tX_1\times\tX_2$,  where  we recall $\widehat{x}=(\widehat{x}_1,\widehat{x}_2)\in C\subset\tX$, we have   
\begin{align}\label{projection}
0\leq \langle \widehat{x}-c,\xi\rangle_{\tX,\tX^*}=\langle\widehat{x}_1-c_1,\xi_1\rangle_{\tX_1,\tX_1^*}.
\end{align}
As $\widehat{x}_1=\Pi_1(\widehat{x})\in(\Pi_1(C))^{\rm o}$, we deduce that \begin{equation}\label{mupro}
\sum_{j\in J}\mu_j\circ D_{x_1}h_j(\widehat{x}_1)=\xi_1=0.
\end{equation}
Due to the fact that $ (D_{x_1}h_j(\widehat{x}_1))_{j\in J}:\tX_1\to\prod_{j\in J}\tH_j$ is surjective, we get that $\mu_j=0$ $(j\in J)$, which contradicts the assertion in \lemref{fj_condition}-(iii). Thus, \corref{CQcon}-(iii) holds.

Now, let us assume that LICQ($\widehat{x}$) holds. From \equref{contradiction} and \equref{projection} again, it follows that
\begin{align*}
\sum_{i\in I}\lambda_i\circ D_{x_1}g_i(\widehat{x}_1)-\sum_{j\in J} \mu_j\circ D_{x_1}h_j(\widehat{x}_1)=-\xi_1=0.
\end{align*}
We argue by contradiction to show that $\lambda_i=0$ for $i\in I$. To this end, we first suppose that $\ker(\lambda_1)^c\neq\varnothing$. Then, by the surjection in LICQ($\widehat{x}$), there is an $x_0\in\tX_1$ s.t. $(D_{x_1}g_i(\widehat{x}_1)(x_0),D_{x_1}h_j(\widehat{x}_1)(x_1))_{i\in I,j\in J}$ belongs to the nonempty set $\ker(\lambda_1)^c\times\prod_{i\in I\backslash \{1\}}\ker(\lambda_i)\times\prod_{j\in J}\ker(\mu_j)$,  which yields that
\begin{align*}
0=\sum_{i\in I}\lambda_i\circ D_{x_1}g_i(\widehat{x}_1)(x_0)-\sum_{j\in J} \mu_j\circ D_{x_1}h_j(\widehat{x}_1)(x_0)=\left\langle D_{x_1}g_1(\widehat{x}_1)(x_0),\lambda_1\right\rangle_{\tY_1,\tY_1^*}\neq 0.
\end{align*}
This contradiction leads to $\ker(\lambda_1)=\tY_1$,  and hence $\lambda_1=0$. We then arrive at
\begin{align*}
\sum_{i\in I\backslash\{1\}}\lambda_i\circ D_{x_1}g_i(\widehat{x}_1)-\sum_{j\in J} \mu_j\circ D_{x_1}h_j(\widehat{x}_1)=0.
\end{align*}
Repeating the procedures above, we conclude that $\lambda_i=0$ for all $i\in I$. Hence, the above equality reduces to \equref{mupro}. Following the proof in the MFCQ case, we can deduce that $\mu_j=0$ for $j\in J$ as desired.  
\end{proof} 

{\begin{proof}[Proof of Proposition \ref{fj_sufficient}]
Recall $\xi\in N_C(\widehat{x})$ in the condition (iv) of \thmref{fj_condition}. From the convexity of $f$ on $C$, the condition (iv) of \thmref{fj_condition} (i.e., \eqref{FJ}) and $r_0>0$, it follows that, for any feasible point $x\in C$,
\begin{align*}
& r_0\left(f(x)-f(\widehat{x})\right)\geq \left\langle x-\widehat x,r_0Df(\widehat{x})\right\rangle_{\mathtt{X},\mathtt{X}^*}=\left\langle x-\widehat x,-\xi-\sum_{i\in I}\lambda_i\circ Dg_i(\widehat{x})+\sum_{j\in J}\mu_j\circ Dh_j(\widehat{x})\right\rangle_{\mathtt{X},\mathtt{X}^*}\\
&\qquad=\left\langle \widehat{x}-x,\xi\right\rangle_{\mathtt{X},\mathtt{X}^*}-\sum_{i\in I}\left\langle Dg_i(\widehat x)(x-\widehat{x}),\lambda_i\right\rangle_{\mathtt{Y},\mathtt{Y}^*}+\sum_{j\in J}\left\langle Dh_j(\widehat{x})(x-\widehat{x}),\mu_j\right\rangle_{\mathtt{H}_j}\\
&\qquad=\left\langle \widehat{x}-x,\xi\right\rangle_{\mathtt{X},\mathtt{X}^*}+\sum_{j\in J}\left\langle Dh_j(\widehat{x})(x-\widehat{x}),\mu_j\right\rangle_{\mathtt{H}_j}+\sum_{i\in I}\left\langle g_i(x)-g_i(\widehat{x})-Dg_i(\widehat{x})(x-\widehat{x}),\lambda_i\right\rangle_{\mathtt{Y},\mathtt{Y}^*}\\
&\qquad\quad-\sum_{i\in I}\left\langle g_i(x)-g_i(\widehat{x}),\lambda_i\right\rangle_{\mathtt{Y},\mathtt{Y}^*}.
\end{align*}
In light of the convexity of $g_i$ on $C$ for $i\in I$, we get that $g_i(x)-g_i(\widehat{x})-Dg_i(\widehat{x})(x-\widehat{x})\geq_{K_i}0$. This, together with $  \left\langle g_i(x)-g_i(\widehat{x}),\lambda_i\right\rangle_{\mathtt{Y},\mathtt{Y}^*}=\left\langle g_i(x),\lambda_i\right\rangle_{\mathtt{Y},\mathtt{Y}^*}\leq 0$, yields that 
\begin{align*}
\sum_{i\in I}\left\langle g_i(x)-g_i(\widehat{x})-Dg_i(\widehat{x})(x-\widehat{x}),\lambda_i\right\rangle_{\mathtt{Y},\mathtt{Y}^*}-\sum_{i\in I}\left\langle g_i(x)-g_i(\widehat{x}),\lambda_i\right\rangle_{\mathtt{Y},\mathtt{Y}^*}\geq 0.
\end{align*}
On the other hand, thanks to the affine representation of the equality constraint function $h_j$ for $j\in J$, we have that, for any feasible point $x\in C$, 
\begin{align*}
Dh_j(\widehat{x})(x-\widehat{x})=L_j(x-\widehat{x})=h_j(x)-h_i(\widehat{x})=0.
\end{align*}
As a result, by the construction of $\xi\in N_C(\widehat{x})$ (c.f. \eqref{eq:NCX}), we can deduce that, for any feasible point $x\in C$,
\begin{align}\label{eq:fxhatx0}
r_0(f(x)-f(\widehat{x}))\geq \langle \widehat{x}-x,\xi\rangle_{\mathtt{X},\mathtt{X}^*}\geq 0.
\end{align}
Then, the desired result follows from \eqref{eq:fxhatx0} and the fact $r_0>0$.
\end{proof}}

\begin{proof}[Proof of \lemref{continuous_ell}]
Let $i\in I_1$. Then, for any $t_1,t_2\in[0,T]$ with $t_1<t_2$, it holds that
\begin{align*}
|g_i(t_2)-g_i(t_1)|&\leq\E[|\phi^i(t_1,X_{t_1},\Law(X_{t_1}),\widetilde{\alpha}^{t_1})-\phi^i(t_2,X_{t_2},\Law(X_{t_2}),\widetilde{\alpha}^{t_2})|]\\
&\leq\E[|\phi^i(t_1,X_{t_1},\Law(X_{t_1}),\widetilde{\alpha}^{t_1})-\phi^i(t_2,X_{t_2},\Law(X_{t_2}),\widetilde{\alpha}^{t_1})|]\\
&\quad+\E[|\phi^i(t_2,X_{t_2},\Law(X_{t_2}),\widetilde{\alpha}^{t_1})-\phi^i(t_2,X_{t_2},\Law(X_{t_2}),\widetilde{\alpha}^{t_2})|]:=v_1+v_2.
\end{align*}
Note that $v_1$ converges to $0$ as $|t_1-t_2|\to 0$ by DCT and the continuity of $\phi^i$ in $(t,x,\mu)$. Furthermore, by DCT, $\E[\|\widetilde{\alpha}^{t_2}-\widetilde{\alpha}^{t_1}\|_{\X}^2]=\E[\int_{t_1}^{t_2}|\alpha_s|^2\d s]$ converges to $0$ as $|t_1-t_2|\to 0$ since $\alpha\in\Lb_{\Fb}^2$. For the term $v_2$, we have that, as $|t_1-t_2|\to 0$,
\begin{align*}
v_2&=\E[|\langle D_{\gamma}\phi^i(t_2,X_{t_2},\Law(X_{t_2}),\widetilde{\alpha}^{t_1}),\widetilde{\alpha}^{t_2}-\widetilde{\alpha}^{t_1}\rangle_{\X}|]+o(\E[\|\widetilde{\alpha}^{t_2}-\widetilde{\alpha}^{t_1}]\|_{\X})\\
&\leq\left(\E[\| D_{\gamma}\phi^i(t_2,X_{t_2},\Law(X_{t_2}),\widetilde{\alpha}^{t_1})\|_{\X}^2]\right)^{\frac12}\left( \E[\|\widetilde{\alpha}^{t_2}-\widetilde{\alpha}^{t_1}\|_{\X}^2]\right)^{\frac12}+o(\left(\E[\|\widetilde{\alpha}^{t_2}-\widetilde{\alpha}^{t_1}\|_{\X}^2]\right)^{\frac12}).
\end{align*}
By using \assref{ass1}-(A2), $\E[\| D_{\gamma}\phi^i(t_2,X_{t_2},\Law(X_{t_2}),\widetilde{\alpha}^{t_1})\|_{\X}^2]<\infty$, and hence $v_2\to 0$ as $|t_1-t_2|\to 0$ due to the fact $\E[\|\widetilde{\alpha}^{t_2}-\widetilde{\alpha}^{t_1}\|_{\X}^2]=\E[\int_{t_1}^{t_2}|\alpha_s|^2\d s]$. 
\end{proof} 

\begin{proof}[Proof of \lemref{well-posedSDE}]
For any $Z\in\Lb_{\Fb}^2$, the classical SDE theory (c.f. Karatzas and Shreve~\cite{KaraShreve1998}) gives that the following SDE has a unique strong solution $Y\in\Lb_{\Fb}^2$ to
\begin{align*}
\d Y_t=B(t,\omega,Y_t,\overline{Z}_t)\d t+\Sigma(t,\omega,Y_t,\overline{Z}_t),\quad Y_0\in L^2((\Omega,\F_0,\Pb);\R^n),  
\end{align*}
where, for $t\in[0,T]$, $\overline{Z}_t$ stands for the r.v. $Z_t$ instead of the evaluation $Z_t(\omega)$ at $\omega\in\Omega$. Then, we have constructed a self-mapping $G:\Lb_{\Fb}^2\mapsto\Lb_{\Fb}^2$ by $G(Z)=Y$. Thus, the existence of solution reduces to showing that the mapping $G$ admits a fixed point. For $i=1,2$, let $Y^i=(Y_t^i)_{t\in[0,T]}$ be the corresponding solution associated to $Z^i\in\Lb_{\Fb}^2$. Let $M=M(T)>0$ be a generic constant that may differ from line to line. The standard moment estimation under the assumption \equref{Lipschitz} on coefficients $(B,\Sigma)$ results in $\E\left[|Y_t^1-Y_t^2|^2\right]\leq M\int_0^t\left\{\E\left[|Y_s^1-Y_s^2|^2\right]+\E\left[|Z_s^1-Z_s^2|^2\right]\right\}\d s$ for $t\in[0,T]$. Applying the Gronwall's lemma, we obtain $\E\left[|Y_t^1-Y_t^2|^2\right]\leq M\int_0^t\E\left[|Z_s^1-Z_s^2|^2\right]\d s$ for $t\in[0,T]$, 
which gives the continuity of $G:\Lb_{\Fb}^2\mapsto\Lb_{\Fb}^2$. 

Next, let $Y^0_t=Y_0$ for all $t\in[0,T]$, and define $Y_t^n$ $(n\in\mathbb{Z}_+)$ by induction as follows:
\begin{align}\label{iteration}
Y_t^n=Y_0+\int_0^TB\left(t,\omega,Y_t^n,\overline{Y_t^{n-1}}\right)\d t+\int_0^T\Sigma\left(t,\omega,Y_t^n,\overline{Y_t^{n-1}}\right)\d W_t.
\end{align}
Under Assumption \equref{Lipschitz}, it holds that, for all $n\in\mathbb{N}$,
\begin{align*}
\E\left[|Y_t^{n+1}-Y_t^n|^2\right]\leq M\int_0^t\E\left[|Y_t^{n}-Y_t^{n-1}|^2\right]\d s,~~\forall t\in[0,T].    
\end{align*}
This implies that $|Y^{n+1}-Y^n|_{\Lb_{\Fb}^2}^2=O(\frac{1}{n!})$ as $n\to \infty$. Thus, $(Y_n)_{n\in\mathbb{N}}$ is a Cauchy sequence in $\Lb_{\Fb}^2$, and hence there exists some $Y\in\Lb_{\Fb}^2$ such that $Y^n\to Y$, as $n\to\infty$, in $\Lb_{\Fb}^2$. Letting $n\to\infty$, we see that $Y\in\Lb_{\Fb}^2$ is a fixed point of the mapping $G:\Lb_{\Fb}^2\mapsto\Lb_{\Fb}^2$. By Gronwall's lemma again, we conclude the uniqueness of the solution to SDE \equref{selfSDE}.
\end{proof}

{}

\begin{thebibliography}{}

\bibitem[Acciaio et al.(2017)]{Carmona1} B. Acciaio, J. Backhoff-Veraguas, and R. Carmona (2017): Extended mean field control problems: stochastic maximum principle and transport persepctive. {\it SIAM J. Contr. Optim.} 57(6), 3666-3693.

\bibitem[Andersson and Djehiche(2010)]{AD2010} D. Andersson, and B. Djehiche (2010): A maximum principle for SDEs of mean-field type. {\it Appl. Math. Optim.} 63, 341-356.



\bibitem[Bayraktar and Yao(2024a)]{BYao24a} E. Bayraktar, and S. Yao (2024a): Optimal stopping with expectation constraints. {\it Ann. Appl. Probab.} 34(1B), 917-959.

\bibitem[Bayraktar and Yao(2024b)]{BYao24b} E. Bayraktar, and S. Yao (2024b): Stochastic control/stopping problem with expectation constraints. {\it Stoch. Process. Appl.} 176, 104430.


\bibitem[Bensoussan(1981)]{Bensoussan1981} A. Bensoussan (1981): {\it Lecture on Stochastic Control, in Nonlinear Filtering and Stochastic Control}. Lecture Notes in Math. 972, Proc. Cortona, Springer-Verlag, Berlin, New York.

\bibitem[Bensoussan et al.(2013)]{Bensoussan2013} A. Bensoussan, J. Frehse, and P. Yam (2013): {\it Mean Field Games and Mean Field Type Control Theory.} Springer Briefs in Math. Springer-Verlag, New York.

\bibitem[Bo et al.(2022)]{BLY} L. Bo, T. Li, and X. Yu (2022): Centralized systemic risk control in the interbank system: Weak formulation and Gamma-convergence. {\it Stoch. Process. Appl.} 150, 622-654.

\bibitem[Bo et al.(2024)]{BWWX24} L. Bo, J. Wang, X. Wei, and X. Yu (2024): Extended mean-field control problems with Poissonian common noise: Stochastic maximum principle and Hamiltonian-Jacobi-Bellman equation. Preprint avaiable at \url{https://arxiv.org/abs/2407.05356}

\bibitem[Bo et al.(2025)]{BWY} L. Bo, J. Wang, and X. Yu (2025): Constrained mean-field control with singular control: Existence, stochastic maximum principle and constrained FBSDE. Preprint avaiable at \url{https://arxiv.org/abs/2501.12731}

\bibitem[Bonnans and Silva(2012)]{BS2012} J. Bonnans, and F. Silva (2012): First and second order necessary conditions for stochastic optimal control problems. {\it Appl. Math. Optim.} 65, 403-439.

\bibitem[Bokanowski et al.(2016)]{BPZ2016} O. Bokanowski, A. Picarelli, and H. Zidani (2016): State-constrained stochastic optimal control problems via reachability approach.  {\it SIAM J. Contr. Optim.} 54, 2568-2593.

\bibitem[Bonnans et al.(2023)]{BGP2023} J. Bonnans, J. Gianatti, and L. Pfeiffer (2023): A Lagrangian approach for aggregative mean field games of controls with mixed and final constraints. {\it SIAM J. Contr. Optim.} 61(1), 105-134. 

\bibitem[Bouchard and Nutz(2012)]{BN2012} B. Bouchard, and M. Nutz (2012): Weak dynamic programming for generalized state constraints. {\it SIAM J. Contr. Optim.} 50, 3344-3373.       

\bibitem[Buckdahn et al.(2011)]{Buckdahn2011} R. Buckdahn, B. Djehiche, and J. Li (2011): A general maximum principle for SDEs of mean-field type. {\it Appl. Math. Optim.} 64(2), 197-216.


\bibitem[Carmona and Delarue(2015)]{Carmona2015} R. Carmona, and F. Delarue (2015): Forward-backward stochastic differential equations and
controlled McKean-Vlasov dynamics. {\it Ann. Probab.} 43(5), 2647-2700.

\bibitem[Carmona and Delarue(2017)]{Carmona2} R. Carmona, and F. Delarue (2017): {\it Probabilistic Theory of Mean Field Games with Applications}. Volume I: Mean Field FBSDEs, Control and Games, Springer-Verlag, New York.


\bibitem[Chow et al.(2020)]{ChowYZ2020} Y. Chow, X. Yu, and C. Zhou (2020): On dynamic programming principle for stochastic control under expectation constraints. {\it J. Optim. Theory Appl.} 185, 803-818.

\bibitem[Clarke(1974)]{Clarke} F.H. Clarke (1976): A new approach to Lagrange multipliers. \textit{Math. Oper. Res.} 1(2), 165-174.

\bibitem[Craig et al.(2025)]{Craig24} K. Craig, K. Elamvazhuthi, and H. Lee (2025): A blob method for mean field control with terminal constraints. {\it ESAIM: COCV} 31, Article Number 20.	

\bibitem[Daudin(2023a)]{Daudin23a} S. Daudin (2023a): Mean-field limit for stochastic control problems under state constraint. Preprint available at \url{https://arxiv.org/abs/2306.00949v1}

\bibitem[Daudin(2023b)]{Daudin23b} S. Daudin (2023b): Optimal control of the Fokker-Planck equation under state constraints in the Wasserstein space. {\it J. Math. Pures Appl.} 175, 37-75. 


\bibitem[Djehiche et al.(2015)]{Djehicheetal2015} B. Djehiche, H. Tembine, and R. Tempone (2015): A stochastic maximum principle for risk-sensitive mean-field type control. {\it IEEE Trans. Auto. Contr.} 60(10), 2640-2649.

\bibitem[Du and Wu(2022)]{DW2022} K. Du, and Z. Wu (2022): Social optima in mean field linear–quadratic–Gaussian models with control input constraint. {\it Syst. Contr. Lett.} 162, 105174. 


\bibitem[Ekeland(1974)]{Ekeland} I. Ekeland (1974): On the variational principle. \textit{J. Math. Anal. Appl.} 47, 324-353.

\bibitem[Germain et al.(2023)]{Germain23} M. Germain, H. Pham and X. Warin (2023): A level-set approach to the control of state-constrained McKean-Vlasov equations: Applications to renewable energy storage and portfolio selection. {\it Numer. Algebra Control Optim.}, 13, 555-582.


\bibitem[Grammatico et al.(2016)]{Grammatico16}
S. Grammatico, F. Parise, M. Colombino, and J. Lygeros (2016): Decentralized convergence to Nash equilibria in constrained deterministic mean field control. {\it IEEE Trans. Auto. Contr.} 61(11), 3315-3329.




\bibitem[Guo et al.(2025)]{Guoxin} X. Guo, A. Hu, J. Zhang, and Y. Zhang (2025): Continuous-time mean field games: a
primal-dual characterization. {Preprint available at  \url{https://arXiv.org/abs/2503.01042}}.

\bibitem[Hu and Lyu(2025)]{Hu25} A. Hu, and Z. Lyu (2025): Mean-field games with constraints. {Preprint available at \url{https:// arXiv.org/abs/2510.11843}}.



\bibitem[Hu et al.(2022a)]{Huaap22} Y. Hu, X. Shi, and Z.Q. Xu (2022a): Constrained stochastic LQ control with regime switching and application to portfolio selection. {\it Ann. Appl. Probab.} 32(1), 426-460.

\bibitem[Hu et al.(2022b)]{Hu} Y. Hu, S. Tang, and Z.Q. Xu (2022b):  Optimal control of SDEs with expected path constraints and related constrained FBSDEs. \textit{Probab. Uncert.  Quant. Risk} 7(4), 365-384. 

\bibitem[Hu and Zhou(2005)]{HuZhou2005}
Y. Hu, and X.Y. Zhou (2005): Constrained stochastic LQ control with random coefficients, and application to portfolio selection. {\it SIAM J. Contr. Optim.} 44, 444-466.

\bibitem[Karatzas and Shreve(1998)]{KaraShreve1998} I. Karatzas, and S.E. Shreve (1998): {\it Brownian Motion and Stochastic Calculus}.  Graduate Texts in Math.,  volume 113. Springer-Verlag, New York.

\bibitem[Lauri\`ere and Pioronneau(2014)]{Lauriere2014} M. Lauri\'ere, and O. Pironneau (2014): Dynamic programming for mean–field type control. {\it Comptes Rendus Math.} 352(9), 707–713.

\bibitem[Li(2012)]{Li2012} J. Li (2012): Stochastic maximum principle in the mean-field controls. {\it Automatica}. 48(2), 366-73.

\bibitem[Liu(2021)]{Liu2021} X. Liu, Q. Lv., H. Zhang, and X. Zhang (2021): Finite codimensionality method in infinite-dimensional optimization problems. Preprint available at \url{https://arxiv.org/pdf/2102.00652}.

\bibitem[Meherrem and Hafayed(2025)]{MHafa2024} S. Meherrem, and M. Hafayed (2025): A stochastic maximum principle for general mean-field system with constraints. {\it Numer. Algebra Contr. Optim.} 15(3), 565-578.


\bibitem[Meyer-Brandis et al.(2012)]{MB2012} T. Meyer-Brandis, B. \O ksendal, and X.Y. Zhou (2012): A mean-field stochastic maximum principle via Malliavin calculus. {\it Stochastics.} 84(5-6), 643-666.

\bibitem[Nie and Yan(2022)]{Nie2022} T. Nie, and K. Yan (2022):  Extended mean-field control problem with partial observation. {\it ESAIM: Contr.  Optim. Calcu. Variat.} 28:17.

\bibitem[Peng(1990)]{Peng1990} S. Peng (1990): A general stochastic maximum principle for optimal control problems. {\it SIAM J. Contr. Optim.} 28(4), 966-979.

\bibitem[Pfeiffer et al.(2021)]{PTZ2021} L. Pfeiffer, X. Tan, and Y. Zhou (2021): Duality and approximation of stochastic optimal control problems under expectation constraints. {\it SIAM J. Contr. Optim.} 59(5), 3231-3260. 

\bibitem[Pham(2009)]{Pham} H. Pham (2009): {\it Continuous-Time Stochastic Control and Optimization with Financial Applications}. Springer-Verlag, New York.

\bibitem[Pham and Wei(2017)]{PW17} H. Pham, and X. Wei (2017): Dynamic programming for optimal control of stochastic McKean–Vlasov dynamics. {\it SIAM J. Contr. Optim.} 55(2), 1069–1101.

\bibitem[Pham and Wei(2018)]{PW18} H. Pham, and X. Wei (2018): Bellman equation and viscosity solutions for mean-field stochastic control problem. {\it ESAIM: COCV} 24, 437-461.

\bibitem[Soner and Touzi(2002)]{ST2002} H. M. Soner, and N. Touzi (2002): Stochastic target problems, dynamic programming, and viscosity solutions.
{\it SIAM J. Contr. Optim.} 41, 404-424. 


\bibitem[Yong and Zhou(1999)]{Zhou} J. Yong, and X.Y. Zhou (1999):  {\it Stochastic Controls: Hamiltonian Systems and HJB Equations}.  Appl. Math. Vol. 43, Springer-Verlag, New York.

\bibitem[Zhang and Zhang(2024)]{ZZ23} H. Zhang, and X. Zhang (2024): Lagrangian dual method for solving stochastic linear quadratic optimal control problems with terminal state constraints. {\it ESAIM: COCV} 30, Article Number 22.

\end{thebibliography}
\end{document}